\theoremstyle{theorem}
\newtheorem{theorem}{Theorem}[section]
\theoremstyle{definition}
\newtheorem{definition}[theorem]{Definition}
\theoremstyle{lemma}
\newtheorem{lemma}[theorem]{Lemma}
\theoremstyle{proposition}
\newtheorem{proposition}[theorem]{Proposition}
\theoremstyle{remark}
\newtheorem*{remark}{Remark}
\theoremstyle{corollary}
\newtheorem{corollary}[theorem]{Corollary}
\theoremstyle{conjecture}
\theoremstyle{definition}
\theoremstyle{question}
\newtheorem{question}[theorem]{Question}
\newcommand{\R}{\mathbb{R}}
\newcommand{\N}{\mathbb{N}}
\newcommand{\Z}{\mathbb{Z}}
\newcommand{\ExpField}{\mathsf{ExpField}}
\newcommand{\supp}{\mathrm{supp}}
\newcommand{\OR}{\mathrm{OR}}
\newcommand{\OSR}{\mathrm{OSR}}
\newcommand{\Th}{\mathrm{Th}}
\newcommand{\LE}{\mathrm{LE}}
\newcommand{\IP}{\mathrm{IP}}
\newcommand{\Ord}{\mathrm{Ord}}
\newcommand{\ord}{\mathrm{ord}}
\newcommand{\id}{\mathrm{id}}
\begin{document}
\title{Analogues of Shepherdson's Theorem for a language with exponentiation}
\author{Konstantin Kovalyov}
\affil{
Steklov Mathematical Institute of Russian Academy of Sciences, Moscow, Russia

\href{mailto:kovalyov.ka@mi-ras.ru}{kovalyov.ka@mi-ras.ru}
}

\maketitle

\begin{abstract}
    In 1964 J. Shepherdson \cite{shepherdson:1964} proved that a discretely ordered semiring $\mathcal{M}^+$ satisfies $\sf{IOpen}$ (quantifier-free induction) iff the corresponding ring $\mathcal{M}$ is an integer part of a model of the theory of real closed fields. In this paper, we consider open induction schema in the language of arithmetic expanded by exponentiation or by the power function and try to find similar criteria for models of these theories.
    
    For several recursively axiomatized extensions $T$ of the theory of real closed fields we obtain analogues of Shepherdson's Theorem in the following sense: If an exponential field $\mathcal R$ is a model of $T$ and a discretely ordered ring $\mathcal M$ is an exponential integer part of $\mathcal R$, then $\mathcal M^+$ is a model of open induction in the expanded language. The proof of the opposite implication --- that for any model $\mathcal M$ of open induction in the expanded language there exists an exponential field $\mathcal R \vDash T$ such that $\mathcal M$ is an exponential integer part of $\mathcal R$ --- remains, in general, an open question. However, we isolate a natural sufficient condition, related to the well-known Bernoulli inequality, under which this result holds. We define a finite extension $T$ of the usual open induction so that, for any discretely ordered ring $\mathcal M$, the semiring $\mathcal M^+$ satisfies $T$ iff there is an exponential real closed field $\mathcal R$ with the inequality $\exp(x) \geqslant 1 + x$ such that $\mathcal M$ is an exponential integer part of $\mathcal R$. Using these results, we obtain some concrete independence results for these theories.
\end{abstract}

\section{Introduction}\label{section_introduction}

In 1964, J. Shepherdson proved a remarkable theorem relating models of weak arithmetic satisfying quantifier-free induction to models of real closed fields. This connection allows one to build computable models of weak arithmetical theories from models of the theory of real closed fields, which is complete and decidable (see, for instance, \cite{shepherdson:1964, berarducci}). Thus, Shepherdson's Theorem provides a lower bound to the Tennenbaum phenomenon: no nonstandard model of a sufficiently strong arithmetical theory can have computable addition or multiplication functions. (Currently, Tennenbaum's Theorem is known for theories as weak as $\mathsf{IE}_1$ of bounded existential induction, see \cite{wilmers_1985}.)

In this paper we approach the Tennenbaum boundary from below and are looking for extensions of Shepherdson's Theorem to stronger fragments of arithmetic (such examples exist; as shown in \cite{berarducci}, quantifier-free induction with the normality axiom admits a nonstandard recursive model). Over the years, a lot of attention has been given to exponential fields, especially in connection with the well-known open problem posed by Tarski \cite{tarski1948}: Does the expansion of the field of reals by an exponential function have a decidable first-order theory? This problem led to breakthrough results in the model theory of exponential fields, including the results by A. Wilkie, L. van den Dries and A. Macintyre, some of which will be commented in more details below. (Currently, a positive answer to Tarski's problem is only known modulo Schanuel's conjecture, see~\cite{Macintyre1996OnTD}.)

Analogues of Shepherdson's Theorem for the language of arithmetic expanded by an exponential function have been considered in a few papers, in particular in \cite{ITA_2008__42_1_105_0, jerabek2022, jerabek2024}. The main problem in this study is to develop the techniques of building models of open induction in the language of arithmetic with exponentiation and relating them in a meaningful way to certain classes of exponential fields. Our contributions in this paper are as follows. Firstly, we prove that exponential integer parts of exponential fields satisfying the extreme value theorem scheme for exponential terms are models of the quantifier-free induction in the arithmetic language with exponentiation (see \autoref{theorem_M_exp_ip_ExpField+MaxVal}). Secondly, $x^y$-integer parts of exponential fields satisfying a certain recursively axiomatized theory, true in $(\R, \exp)$, are models of the quantifier-free induction in the arithmetic language with $x^y$ (see \autoref{theorem_(M, x^y)_ip_R_exp}). Thirdly, $x^y$-integer parts of real closed exponential fields satisfying $\exp(x) \geqslant 1 + x$ are exactly the models of the quantifier-free induction in the pure arithmetic language extended with a certain finite theory expressing basic properties of $x^y$ including the Bernoulli inequality (see \autoref{theorem_(M,x^y)_is_iopen+T_x^y<=>ip_of_ExpField_RCF+Bern)}). Finally, we build a nonstandard model of quantifier-free induction in the arithmetic language with $x^y$ to show some concrete independence results (see \cref{corollary_independence_result_1} and \cref{corollary_independence_result_2}). Namely, the irrationality of $\sqrt{2}$ and Fermat's Last Theorem for $n = 3$ are independent from the quantifier-free induction in the arithmetic language with $x^y$. In the remaining part of the introduction we describe these results and their context in more detail.


A discretely ordered ring $\mathcal M$ is called an \emph{integer part} of an ordered ring $\mathcal R \supseteq \mathcal M$ if for all $r \in R$ there exists an $m \in M$ such that $m \leqslant r < m + 1$. By $\mathsf{IOpen}$ we denote the theory of discretely ordered semirings with the induction scheme for quantifier-free formulas (also called open formulas) in the language $\mathcal L_{\OSR} := (+, \cdot, 0, 1, \leqslant)$ of ordered semirings (which will serve us as the basic arithmetic language). Shepherdson's result is as follows. For a discretely ordered ring $\mathcal M$, a discretely ordered semiring $\mathcal{M}^+$ satisfies $\mathsf{IOpen}$ iff the real closure of the quotient field of $\mathcal{M}$ contains $\mathcal{M}$ as an integer part (here $\mathcal M^+$ is the nonnegative part of $\mathcal M$). The theory $\mathsf{IOpen}$ and its models were extensively studied, see, for instance, \cite{berarducci,wilkie1978, van_den_dries1980,van_den_dries1980_weak_arithm,otero1990, mourgues:1993, glivicka2017shepherdsons} and other papers. M. H. Mourgues and J.-P. Ressayre \cite{mourgues:1993} showed that every real closed field has an integer part. This important result was generalized by J.-P. Ressayre~\cite{ressayre:1993} (see also \cite{PaolaDAquino2012}) to real closed exponential fields with growth axiom for exponentiation (RCEF for short), namely, every RCEF has an exponential integer part (i.e., an integer part such that its nonnegative part is closed under exponentiation). 

In this paper we consider a theory $\mathsf{IOpen(\exp)}$ in the language $\mathcal{L}_{\OSR}(\exp)$, a theory $\mathsf{IOpen(x^y)}$ and a finite set of natural axioms for power function $\mathsf{T_{x^y}}$ in the language $\mathcal{L}_{\OSR}(x^y)$. $\mathsf{IOpen}(\exp)$ and $\mathsf{IOpen(x^y)}$ are axiomatized by quantifier-free induction schemata in the corresponding languages and some basic axioms for $\exp$ and $x^y$. In \cref{section_ip_of_exp_fields} and \cref{section_construction_of_K_M} we establish some analogues of Shepherdson's Theorem for models of $\mathsf{IOpen(\exp)}$, $\mathsf{IOpen(x^y)}$ and $\mathsf{IOpen} + \mathsf{T_{x^y}}$ in terms of ``exponential'' integer parts. Namely, we obtain the following:
\begin{itemize}
    \item[(Thm. \ref{theorem_M_exp_ip_ExpField+MaxVal})] every exponential integer part (i.e. an integer part, whose set of positive elements is closed under $\exp$) of a model of $\mathsf{ExpField + MaxVal}(\mathcal L_{\OR}(\exp)) + \exp(1) = 2$ is a model of $\mathsf{IOpen}(\exp)$;
    \item[(Thm. \ref{theorem_(M, x^y)_ip_R_exp})] every $x^y$-integer part (i.e. an integer part, whose set of positive elements is closed under $x^y = \exp(y \log(x))$) of a model of $\mathsf{KTB}$ (some concrete recursive subtheory of $\Th(\R, \exp)$) is a model of $\mathsf{IOpen(x^y)}$;
    \item[(Thm. \ref{theorem_(M,x^y)_is_iopen+T_x^y<=>ip_of_ExpField_RCF+Bern)})] $x^y$-integer parts of models of $\mathsf{ExpField + RCF + } \forall x(\exp(x) \geqslant 1 + x)$ are exactly models of $\mathsf{IOpen} + \mathsf{T_{x^y}}$.
\end{itemize}
Here $\mathsf{RCF}$ denotes the theory of real closed fields in the language $\mathcal L_{\OR} := (+, -, \cdot, 0, 1, \leqslant)$ of ordered rings, $\mathsf{ExpField}$ denotes the theory of exponential fields and $\mathsf{MaxVal}(\mathcal L_{\OR}(\exp))$ denotes the schema of extreme value theorems for all $\mathcal L_{\OR}(\exp)$-terms. The proofs of \autoref{theorem_M_exp_ip_ExpField+MaxVal}, \autoref{theorem_(M, x^y)_ip_R_exp}, and the forward implication of \autoref{theorem_(M,x^y)_is_iopen+T_x^y<=>ip_of_ExpField_RCF+Bern)} are presented in \cref{section_ip_of_exp_fields}; the opposite implication of \autoref{theorem_(M,x^y)_is_iopen+T_x^y<=>ip_of_ExpField_RCF+Bern)} is treated in \cref{section_construction_of_K_M}. These results strengthen analogous results in \cite{ITA_2008__42_1_105_0}. Ressayre's result shows that every RCEF which is a model of a certain theory (for instance, $\mathsf{ExpField + MaxVal(\exp)}$), ``produces'' a model of a certain arithmetic theory (for instance, $\mathsf{IOpen(\exp)}$). However, this does not allow one to obtain an $x^y$-integer part from a given RCEF.

To prove the converse of the theorems from \cref{section_ip_of_exp_fields}, we need to be able to build some exponential field containing the model $(\mathcal{M}^+, \exp)$ as an exponential integer part. But having only the usual unary exponentiation in $\mathcal{M}^+$, it is problematic to construct such a field. For example, we need to understand what value an expression of the form $\exp(\frac{1}{m})$ should take, where $m \in M^{>0}$. It must satisfy the property that for all $a, b\in {M}^{>0}$,
$$\frac{a}{b} < \exp\left(\frac{1}{m}\right) \iff \frac{a^m}{b^m} < 2,$$ 
but $a^m$ and $b^m$ are not defined. To do this, we consider the language with the power function $x^y$. With $(\mathcal{M}^+, x^y) \vDash \mathsf{IOpen} + \mathsf{T_{x^y}}$ (or, since $\mathsf{IOpen(x^y)} \vdash \mathsf{T_{x^y}}$, $(\mathcal{M}^+, x^y) \vDash \mathsf{IOpen(x^y)}$) we will be able to construct an exponential real closed field containing $(\mathcal{M}, x^y)$ as an $x^y$-integer part which satisfies the inequality $\exp(x) \geqslant 1 + x$ and thus to prove \autoref{theorem_(M,x^y)_is_iopen+T_x^y<=>ip_of_ExpField_RCF+Bern)}. Following \cite{Carl_Krapp_2021, krapp:2019}, we will denote this exponential field by $(\mathcal K_{\mathcal{M}}, \exp_{\mathcal{M}})$. The idea of the construction of $(\mathcal K_{\mathcal{M}}, \exp_{\mathcal{M}})$, which is presented in \cref{section_construction_of_K_M}, comes from the paper of M. Carl and L. Krapp (\cite[Section 2]{Carl_Krapp_2021} or \cite[Section 7.2]{krapp:2019}); however, in their setting $\mathcal{M}^+$ is a model of $\mathsf{PA}$. We slightly change his construction so that most of the proofs pass under the much weaker condition of $(\mathcal{M}^+, x^y)$ being a model of $\mathsf{IOpen} + \mathsf{T_{x^y}}$. Also, in \cite{Carl_Krapp_2021} the following interesting fact was proven: if $\mathcal{M}^+ \vDash \mathsf{PA}$ and the field $(\mathcal K_{\mathcal{M}}, \exp_{\mathcal{M}})$ is model complete, then it is o-minimal (\cite[Proposition 4.11]{Carl_Krapp_2021}). Moreover, if Schanuel's Conjecture holds, then under the same conditions we have $(K_{\mathcal{M}}, \exp_{\mathcal{M}}) \vDash Th_{\exists}(\R, \exp)$ (\cite[Corollary 4.12]{Carl_Krapp_2021}). Recall that Schanuel's Conjecture states that if $z_1, \dots, z_n \in \mathbb C$ ($\R$ in the real version) are linearly independent over $\mathbb{Q}$, then the transcendence degree of $\mathbb{Q}(z_1, \dots, z_n, e^{z_1}, \dots, e^{z_n})$ over $\mathbb{Q}$ is at least $n$. So far it has been proved that $(K_{\mathcal{M}}, \exp_{\mathcal{M}})$ is o-minimal (and also a model of $\Th(\R, \exp)$) only when $\mathcal{M}^+\vDash \Th(\mathbb{N})$ (\cite[Theorem 4.5]{Carl_Krapp_2021}).

It is worth mentioning that results of \cref{section_ip_of_exp_fields} heavily rely on the results on exponential fields and exponential equations, particularly, on those of L. van den Dries \cite{vandendries:1984} and A. Khovanskii \cite{khovankii}. The first paper concerns exponential fields in the most general setting and contains several useful results on exponential polynomials. In the second paper, it was proved that the set of roots of a system of exponential equations  (or, more generally, Pfaffian equations) in $\R$ has finitely many connected components. Moreover, the bound on the number of these components is recursive. It follows then that existential formulas in $\mathcal L_{\OR}(\exp)$ with one free variable define sets of a finite type (more precisely, a finite union of intervals and points). One important consequence of this result is the fact that the standard exponential field $(\R, \exp)$ is o-minimal, i.e., every definable subset of $\R$ is a finite union of intervals and points: by the famous result of A. Wilkie (see \cite[Second Main Theorem]{wilkie:1996}) $(\R, \exp)$ is model complete, hence, every formula is equivalent to an existential one, then apply the result of A. Khovanskii. The line of research of $(\R, \exp)$ is mostly motivated by an old open problem posed by A. Tarski whether the theory $\Th(\R, \exp)$ is decidable. Towards a solution of this problem, A. Wilkie and A. Macintyre have obtained the following result (based on the work \cite{wilkie:1996}): under the real version of Schanuel's Conjecture this theory is decidable ({\cite[Theorem 1.1]{Macintyre1996OnTD}}). Theories of other expansions of the ordered field $\R$ and models of thereof were studied extensively, see, for example, \cite{denef_dries_1988, MILLER1994,vandendries_macintyre_marker:1994,vandendries_macintyre_marker:1997,vandendries_macintyre_marker:2001} and others.

Models of arithmetical theories as exponential integer parts were studied by E. Jeřábek in \cite{jerabek2022}, \cite{jerabek2024} and by S. Boughattas and J.-P. Ressayre in \cite{ITA_2008__42_1_105_0}. E. Jeřábek~\cite{jerabek2022} shows that every countable model of a two-sorted arithmetical theory $\mathsf{VTC^0}$ is an exponential integer part of an RCEF. Since the theories $\mathsf{VTC^0}$ and $\mathsf{IOpen} + \mathsf{T_{x^y}}$ are incomparable (that is, neither theory implies the other), this result is incomparable with our result described above (\autoref{theorem_(M,x^y)_is_iopen+T_x^y<=>ip_of_ExpField_RCF+Bern)}). In~\cite{jerabek2024}, theories of exponential integer parts of RCEF in the languages with $\exp$, $P_2$ (the unary predicate for powers of 2) and in the pure language of ordered rings were axiomatized. As a consequence of the latter result E. Jeřábek established that not every model of $\mathsf{IOpen}$ has an elementary extension to an exponential integer part of an RCEF. However, the following question remains unanswered in \cite{jerabek2024}: whether \emph{every} model of the theory of exponential integer parts of RCEF in the language with $\exp$ can be embedded into a RCEF as an exponential integer part. In \cite{ITA_2008__42_1_105_0}  the following results were proved: If $\mathcal M^+$ is an $x^y$-integer part of a model of $\mathsf{ExpField} + \mathsf{IntVal}(\mathcal L_{\OR}(\exp)) + \mathsf{MaxVal}(\mathcal L_{\OR}(\exp)) + (x > 2 \to x^2 < \exp(x))$ (resp., $\Th(\R, \exp)$), then it is a model of $\mathsf{LOpen}(\exp)$ (resp., $\mathsf{LOpen(x^y)}$) (here $\mathsf{LOpen(\dots)}$ stands for the least element scheme in the corresponding language). We will strengthen these results (\autoref{theorem_M_exp_ip_ExpField+MaxVal}, \autoref{theorem_(M, x^y)_ip_R_exp} and \cref{prop_strengthening}) by replacing  $\mathsf{ExpField} + \mathsf{IntVal}(\mathcal L_{\OR}(\exp)) + \mathsf{MaxVal}(\mathcal L_{\OR}(\exp)) + (x > 2 \to x^2 < \exp(x))$ with $\mathsf{ExpField} + \mathsf{MaxVal}(\mathcal L_{\OR}(\exp)) + (\exp(1) = 2)$ and by replacing $\Th(\R, \exp)$ with a recursive subtheory of it.

In \cref{section_nonstandard_model} we construct nonstandard models of $\mathsf{IOpen}(\exp)$ and $\mathsf{IOpen(x^y)}$ using the o-minimal exponential field $\mathbb{R}((t))^{\LE}$ introduced in \cite{vandendries_macintyre_marker:2001} by L. van den Dries, A. Macintyre and D. Marker and theorems from \cref{section_ip_of_exp_fields}. Then, similarly to J. Shepherdson \cite{shepherdson:1964}, we obtain some independence results for these theories, for example, the irrationality of $\sqrt{2}$ and Fermat's Last Theorem for $n = 3$ are not provable. Let us mention that the theories $\mathsf{IOpen}(\exp)$ and $\mathsf{IOpen(x^y)}$ are sound (and even can be considered as subtheories of Peano arithmetic under a suitable interpretation of $\exp$ and $x^y$), so they cannot prove false statements such as the rationality of $\sqrt{2}$ or the negation of Fermat's Last Theorem. Hence, in order to prove the independence of these statements we only need to show their unprovability. Finally, we note that Shepherdson's model is recursive, so his result implies  that Tennenbaum's Theorem does not hold for $\mathsf{IOpen}$. A similar question for $\mathsf{IOpen}(\exp)$ and $\mathsf{IOpen(x^y)}$ seems to be open (our model is far from being recursive).

In \cref{section_further_results} we discuss some open questions and briefly mention some further results concerning extensions of $\mathsf{IOpen}$ in the language with $\exp$, which are under preparation.

\section{Preliminaries}\label{section_preliminaries}

\subsection{Conventions and notations}

By a ring we mean an associative commutative unitary ring. Usually, structures will be denoted by calligraphic letters (such as $\mathcal{M, F, R}, \dots$), and their domains will be denoted by $M, F, R, \dots$. Given a language $\mathcal L$, an $\mathcal L$-structure $\mathcal M$ and a function symbol $f \notin \mathcal L$ we denote by $\mathcal L(f)$ the expansion of $\mathcal L$ by $f$ and by $(\mathcal M, f_{\mathcal M})$ the expansion of $\mathcal M$ by function $f_{\mathcal M}$. If there is no confusion, we will omit the subscript $\mathcal M$ for interpretations of symbols from $\mathcal L$ in a structure $\mathcal M$. For example, if $\mathcal M = (M, +_\mathcal M, -_\mathcal M, \cdot_\mathcal M, 0_\mathcal M, 1_\mathcal M)$ is a ring, we may denote it by $(M, +, -, \cdot, 0, 1)$. We will denote the fact that $\mathcal M$ is a substructure of $\mathcal N$ as $\mathcal M \subseteq \mathcal N$ and the fact that $\mathcal M$ is an elementary substructure of $\mathcal N$ as $\mathcal M \preceq \mathcal N$. Given a structure $\mathcal M$ in a language $\mathcal L$ we will denote by $\Th(\mathcal M)$ the elementary theory of $\mathcal M$, i.e. the set of all $\mathcal L$-sentences which hold in $\mathcal M$. 

We will consider ordered semirings and rings (the precise definitions will be given later in this section) in the languages $$\mathcal L_{\OSR} := (+^{(2)}, \cdot^{(2)}, 0, 1, \leqslant^{(2)})$$ of ordered semirings and $$\mathcal{L}_{\OR} := (+^{(2)}, -^{(1)}, \cdot^{(2)}, 0, 1, \leqslant^{(2)})$$ of ordered rings respectively. 

If $\mathcal M = (M, \leqslant, \dots)$ is an ordered structure, then for $a \in M$ we set $M^{>a} := \{m \in M \mid  m > a\}$. For a subset $A \subseteq M$ and an element $a_0 \in M$ we write $A > a_0$ ($A \geqslant a_0$) if for all $a \in A$, $a > a_0$ (resp., $a \geqslant a_0$). Also we write $A < B$ if for all $a \in A$ and $b \in B$, $a < b$. If $\mathcal M = (M, +, -, \cdot, 0, 1, \leqslant)$ is an ordered ring, then we denote by $\mathcal M^+$ the semiring of nonnegative elements $(M^+, +, \cdot, 0, 1, \leqslant)$ with the domain $M^+ := \{a \in M \mid a \geqslant 0\}$. Also we will denote by $\R$ the ordered field of real numbers.

In order to simplify notation, we will omit $\forall$-quantifiers at the beginning of formulas when writing axioms of a theory. For instance, we will write 
$x + y = y + x$ instead of $\forall x \forall y (x + y = y + x)$. We will write $T_1 + T_2$ for deductive closure of the union of theories $T_1$ and $T_2$.

\subsection{Ordered rings and fields}

\begin{definition}
    $\mathsf{OR}$ is a theory in the language $\mathcal{L}_{\OR}$, consisting of the following axioms:
    \begin{enumerate}[start=0,label={(OR\arabic*)}]
        \item $x + (y + z) = (x + y) + z$;
        \item $x + y = y + x$;
        \item $x + 0 = x$;
        \item $x \cdot (y \cdot z) = (x \cdot y) \cdot z$;
        \item $x \cdot y = y \cdot x$;
        \item $x \cdot 1 = x$;
        \item $x \cdot (y + z) = x \cdot y + x \cdot z$;
        \item $x \leqslant x$;
        \item $(x \leqslant y \wedge y \leqslant x) \to x = y$;
        \item $(x \leqslant y \wedge y \leqslant z) \to z \leqslant z$;
        \item $x \leqslant y \vee y \leqslant x$;
        \item $x \leqslant y \to x + z \leqslant y + z$;
        \item $(x \leqslant y \wedge 0 \leqslant z) \to x \cdot z \leqslant y \cdot z$;
        \item $0 < 1$;
        \item $x + (-x) = 0$.
    \end{enumerate}
    Models of $\mathsf{OR}$ will be called \emph{ordered rings} (OR for short). 
\end{definition}

\begin{definition}
    $\mathsf{OF}$ is a theory in the language $\mathcal{L}_{\OR}$, consisting of the theory $\mathsf{OR}$ and the axiom
    $$(x \ne 0) \to \exists y (x \cdot y = 1).$$
    Models of $\mathsf{OF}$ will be called \emph{ordered fields} (OF for short). 
\end{definition}

\begin{remark}
    As usual, we will omit the symbol of multiplication $\cdot$ in the rest of the paper.
\end{remark}

\subsection{Discretely ordered rings and semirings}

\begin{definition}
    $\mathsf{DOR}$ is a theory in the language $\mathcal{L}_{\OR}$, consisting of the theory $\mathsf{OR}$ and the axiom
        $$x \leqslant 0 \vee 1 \leqslant x,$$
    which says that the order is discrete. Models of $\mathsf{DOR}$ will be called \emph{discretely ordered rings} (DOR for short). 
\end{definition}

\begin{definition}
    $\mathsf{DOSR}$ is a theory in the language $\mathcal{L}_{\OSR}$, consisting of the axioms (OR0)--(OR13) and the following axioms:
    $$x = 0 \vee 1 \leqslant x;$$
    $$x \leqslant y \leftrightarrow \exists z (x + z = y).$$
    Models of $\mathsf{DOSR}$ will be called \emph{discretely ordered semirings} (DOSR for short). 
\end{definition}

The theory $\mathsf{DOSR}$ will serve us as the basic arithmetic theory.

\begin{remark}
    One can freely move from models of $\mathsf{DOR}$ to $\mathsf{DOSR}$ and back. More precisely, for every DOR its nonnegative part is a DOSR and every DOSR is isomorphic to a nonnegative part of some DOSR (just consider the ring of pairs $(m, n)$ modulo an equivalence relation $(m, n) \sim (m', n') :\iff m + n' = m' + n$). In the rest of the paper we are mostly dealing with DORs and their nonnegative parts without any mentioning of this equivalence.  For more details see, for instance, \cite{kaye}.
\end{remark}

\begin{definition}
    $\mathsf{IOpen}$ is a theory in the language $\mathcal{L}_{\OSR}$, consisting of the theory $\mathsf{DOSR}$ and the open induction schema
    \begin{align*}
        \Big(\varphi(0, \overline{y}) \wedge \forall x \big(\varphi(x, \overline{y}) \rightarrow \varphi(x + 1, \overline{y})\big)\Big) \rightarrow \forall x \ \varphi(x, \overline{y}),
    \end{align*}
    where $\varphi(x, \overline{y})$ is a quantifier-free formula in the language $\mathcal{L}_{\OSR}$.
\end{definition}

\subsection{Real closed fields}

\begin{definition}
    Given a term $t$ in a language expanding $\mathcal L_{\OR}$, we denote by $\mathtt{IntVal}_{t}$ the formula
    $$(x < y \wedge t(x, \overline{w}) \leqslant 0 \wedge t(y, \overline{w}) \geqslant 0) \rightarrow \exists z ( (x \leqslant z \leqslant y) \wedge t(z, \overline{w}) = 0)$$
    and by $\mathtt{MaxVal}_{t}$ the formula 
    $$(x < y) \rightarrow \exists z_{\max}( x \leqslant z_{\max} \leqslant y \wedge \forall z ( x \leqslant z \leqslant y \to t(z, \overline{w}) \leqslant t(z_{\max}, \overline{w}))).$$
    Also we denote by $\mathsf{IntVal}(\mathcal L)$ the scheme of intermediate value theorems, i.e.
    $$\{\mathtt{IntVal}_{t}\mid  t \text{ is a term in the language }\mathcal{L}\},$$ 
    and by $\mathsf{MaxVal}(\mathcal L)$ the scheme of extreme value theorems, i.e.
    $$\{\mathtt{MaxVal}_{t}\mid  t \text{ is a term in the language }\mathcal{L}\}.$$ 
\end{definition}

\begin{definition}
    $\mathsf{RCF}$ is a theory in the language $\mathcal{L}_{\OR}$, consisting of the theory $\mathsf{OF}$ and the scheme $\mathsf{IntVal}(\mathcal L_{\OR})$. Models of $\mathsf{RCF}$ will be called \emph{real closed fields} (RCF for short).  
\end{definition}

Of course, the theory above is not a unique axiomatization of the class of real closed fields. The following results are well-known, more details can be found, for example, in \cite{lang} and \cite{marker}.

\begin{theorem}\label{theorem_equiv_defs_of_rcf}
    Let $\mathcal F$ be an ordered field. Then the following are equivalent:
    \begin{enumerate}[(1)]
        \item $\mathcal F$ is real closed;
        \item Every positive $a \in F$ has a square root in $\mathcal F$ and every polynomial of odd degree over $\mathcal F$ has a root;
        \item $\mathcal F \vDash \Th(\mathbb{R})$.
    \end{enumerate}
\end{theorem}

\begin{definition}
    If $\mathcal F \subseteq \mathcal R$ are ordered fields, then $\mathcal R$ is called a \emph{real closure} of $\mathcal F$ if $\mathcal R$ is real closed and the extension $\mathcal F \subseteq \mathcal R$ is algebraic.
\end{definition}

\begin{theorem}[{\cite[Chapter XI, Theorem 2.9]{lang}}]\label{theorem_real_closure_exists_and_unique}
    For every ordered field $\mathcal F$ there exists its real closure $\mathcal R$. Moreover, this $\mathcal R$ is unique up to an isomorphism fixing $F$.
\end{theorem}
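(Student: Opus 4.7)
The plan is to prove existence and uniqueness separately. For existence, I would use a Zorn's lemma argument on the poset of ordered algebraic extensions of $\mathcal F$. For uniqueness, the main tool is Sturm's theorem, which expresses the number of real roots of a polynomial in any real closure purely in terms of data computable within the base field.

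\emph{Existence.} Let $\Sigma$ be the family of pairs $(\mathcal E, \leqslant_{\mathcal E})$ where $\mathcal E$ is an algebraic field extension of $\mathcal F$ and $\leqslant_{\mathcal E}$ is an order on $\mathcal E$ extending the order of $\mathcal F$. With the partial order $(\mathcal E_1, \leqslant_1) \preccurlyeq (\mathcal E_2, \leqslant_2)$ meaning $\mathcal E_1 \subseteq \mathcal E_2$ with $\leqslant_2$ restricting to $\leqslant_1$, $\Sigma$ is nonempty and closed under chains, so Zorn's lemma yields a maximal $\mathcal R$. I would then check that $\mathcal R$ is real closed via criterion (3) of \cref{theorem_equiv_defs_of_rcf}: if some positive element of $\mathcal R$ lacked a square root, or some odd-degree polynomial over $\mathcal R$ had no root, one could construct a proper formally real algebraic extension of $\mathcal R$. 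The key auxiliary lemma is that any formally real algebraic extension of an ordered field admits a compatible order, a consequence of the Artin--Schreier characterization of positive cones as precisely the subsets closed under $+, \cdot$ and containing all squares but not $-1$. This contradicts maximality, so $\mathcal R$ is real closed.

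\emph{Uniqueness.} Given two real closures $\mathcal R_1, \mathcal R_2$ of $(\mathcal F, \leqslant)$, I would build an $F$-fixing order-preserving isomorphism by a back-and-forth argument. At stage $n$ I have a partial order-preserving $F$-embedding $\iota_n \colon \mathcal F(\alpha_1, \ldots, \alpha_n) \hookrightarrow \mathcal R_2$, and given a new element $\alpha \in R_1$ I must extend $\iota_n$ to $\mathcal F(\alpha_1, \ldots, \alpha_n, \alpha)$. Sturm's theorem is the key: for any polynomial $p(x)$ over $\mathcal F(\alpha_1, \ldots, \alpha_n)$ and any $a < b$ in this subfield, the number of roots of $p$ in the interval $(a, b)$ of any real closure is determined by sign changes of a finite Sturm sequence evaluated at $a$ and $b$, quantities internal to the subfield. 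Applied to the minimal polynomial of $\alpha$ over $\mathcal F(\alpha_1, \ldots, \alpha_n)$ together with intervals cut out by the $\iota_n(\alpha_i)$, this lets me locate the unique root of the transported polynomial in $\mathcal R_2$ sitting in the same position relative to the $\iota_n(\alpha_i)$ as $\alpha$ does in $\mathcal R_1$, and set $\iota_{n+1}(\alpha)$ to be that root.

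\emph{Main obstacle.} The hard part is the uniqueness proof: one must show the extension step is always possible and that the resulting map is a well-defined field embedding. Naively sending roots to roots can fail to respect algebraic relations among different generators, which is why the back-and-forth must carry the full subfield $\mathcal F(\alpha_1, \ldots, \alpha_n)$ along at each step and rely on Sturm's theorem to guarantee a matching root of the correct polynomial in $\mathcal R_2$ at the correct position relative to previously embedded elements. The existence part, while technical, reduces once the order-extension lemma for formally real algebraic extensions is available.
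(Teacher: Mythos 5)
The paper does not prove this result; it cites it directly from Lang's \emph{Algebra} (Theorem 2.9), so there is no in-paper argument to compare your sketch against. Your architecture --- Zorn's lemma for existence, Sturm's theorem for uniqueness --- is the classical route taken in the cited source.

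There is, however, a genuine gap in the existence half: the auxiliary lemma you invoke, ``any formally real algebraic extension of an ordered field admits a compatible order,'' is false as stated. Order $\mathbb Q(\sqrt 2)$ so that $\sqrt 2 < 0$; then $\mathbb Q(2^{1/4})$ is a formally real algebraic extension of it, but every ordering of $\mathbb Q(2^{1/4})$ makes $\sqrt 2 = (2^{1/4})^2$ positive, so none restricts to the chosen order on the subfield. Formal reality guarantees \emph{some} ordering on the extension, not one extending a prescribed ordering below. The correct Artin--Schreier input is: if $(F, P)$ is an ordered field and $E/F$ is algebraic, and the preordering $T = \{\sum p_i e_i^2 : p_i \in P,\ e_i \in E\}$ is proper (does not contain $-1$), then $T$ extends to an ordering of $E$ restricting to $P$ on $F$. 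You then still have to verify $-1 \notin T$ in the two situations triggered by criterion (3) of \cref{theorem_equiv_defs_of_rcf}: adjoining $\sqrt a$ for $a \in P$ not already a square (a one-line check by separating $R$-components), and adjoining a root of an odd-degree polynomial (a short induction on degree). That verification is precisely the content the lemma is supposed to carry, and without it the maximality argument does not close. The uniqueness sketch is sound in substance; a small terminological point is that since both $R_1, R_2$ are algebraic over $F$, what you actually run is a Zorn / ``forth'' chain of partial order-preserving $F$-embeddings followed by a terminal surjectivity check (the image is a real closed subfield of $R_2$ over which $R_2$ is formally real and algebraic, hence equals $R_2$), rather than a back-and-forth.
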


\begin{remark}
    Due to the theorem above one can speak about \emph{the} real closure of a given ordered field.
\end{remark}

\begin{lemma}[{\cite[Chapter XI, Lemma 3.4]{lang}}]\label{theorem_relative_real_closure}
    If $\mathcal R$ is a real closed field and $\mathcal M \subseteq \mathcal R$ is an ordered subring, then the relative algebraic closure of $\mathcal M$ in $\mathcal R$ is the real closure of (the fraction field of) $\mathcal M$.
\end{lemma}

\subsection{Integer parts and Shepherdson's Theorem}

\begin{definition}
    Let $\mathcal M$ and $\mathcal R$ be ordered rings, $\mathcal M \subseteq \mathcal R$ and $\mathcal M$ is DOR. Then $\mathcal M$ is called an \textit{integer part of} $\mathcal R$ if for all $r \in R$ there is $m \in M$ such that $m \leqslant r < m + 1$. Notation: $\mathcal M \subseteq^{\IP} \mathcal R$.
\end{definition}

\begin{remark}
    Since $\mathcal M$ is discretely ordered, for every $r \in R$ its integer part is uniquely defined.
\end{remark}

Let $\mathcal M$ be a discretely ordered ring. Denote by $\mathcal M^+$ the semiring of the nonnegative elements of $\mathcal M$, by $\mathcal{F}(\mathcal M)$ the quotient field of $\mathcal M$ and by $\mathcal{R}(\mathcal M)$ the real closure of $\mathcal{F}(\mathcal M)$ (which is unique up to an isomorphism).

\begin{theorem}[J. Shepherdson, \cite{shepherdson:1964}]\label{shepherdson_theorem}
    Let $\mathcal{M}$ be a discretely ordered ring. Then $\mathcal{M}^+ \vDash \mathsf{IOpen}$ iff $\mathcal{M} \subseteq^{\IP} \mathcal{R}(\mathcal{M})$.
\end{theorem}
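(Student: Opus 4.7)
The theorem is an equivalence, so my plan is to prove the two directions separately: $(\Leftarrow)$ is a semialgebraic argument in $\mathcal R(\mathcal M)$, while $(\Rightarrow)$ requires extracting algebraic information from the weak theory $\mathsf{IOpen}$.

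For $(\Leftarrow)$, I would assume $\mathcal M \subseteq^{IP} \mathcal R(\mathcal M)$. The axioms (Q1)--(Q8) follow from $\mathcal M$ being a discretely ordered ring. For the induction schema, fix a quantifier-free $\varphi(x,\bar y)$ with $\bar y \in (M^+)^k$, assume the induction premises, and suppose for contradiction that $\neg\varphi(n_0,\bar y)$ for some $n_0 \in M^+$. Put $\varphi$ into disjunctive normal form, so its atoms become polynomial (in)equalities in $x$ with coefficients in $\mathcal M$. Then $A := \{r \in R(\mathcal M) : r \geqslant 0 \text{ and } \mathcal R(\mathcal M) \vDash \varphi(r,\bar y)\}$ is a finite union of intervals in $\mathcal R(\mathcal M)$, since each univariate polynomial over this real closed field has only finitely many roots. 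Since $0 \in A$ and $n_0 \notin A$, I would list the maximal subintervals of $A \cap [0,n_0]$ as $I_1, \ldots, I_l$ with $0 \in I_1$. Set $c := \sup I_1$ and $m := \lfloor c \rfloor \in M^+$ (well-defined by the integer part hypothesis). A case analysis on whether $c \in I_1$ and on where $m+1$ sits (in the gap following $I_1$, or skipped into $I_2$ across a gap of length less than $1$) either directly yields $m^* \in M^+$ with $\varphi(m^*,\bar y) \wedge \neg\varphi(m^*+1,\bar y)$, or allows one to iterate the same analysis on $I_2$. The iteration must terminate within at most $l$ steps with the desired contradiction, because $n_0 \notin A$ forces some $m_j+1$ to fall into a gap of $A$ rather than skip into the next subinterval.

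For $(\Rightarrow)$, I would assume $\mathcal M^+ \vDash \mathsf{IOpen}$ and fix $r \in R(\mathcal M)$. Since $r$ is algebraic over $\mathcal F(\mathcal M)$, after clearing denominators it satisfies some $p(x) \in \mathcal M[x]$, which I take irreducible over $\mathcal F(\mathcal M)$; then $p$ and $p'$ are coprime in $\mathcal F(\mathcal M)[x]$ (characteristic zero). A Cauchy-type estimate, verifiable inside $\mathsf{IOpen}$, will yield $N \in M$ with $|r| < N$. The crucial lemma will be a \emph{discrete intermediate value theorem in $\mathsf{IOpen}$}: if $a < b$ in $M^+$ and $p(a)\,p(b) < 0$, then some $m \in M^+$ with $a \leqslant m < b$ satisfies $p(m)\,p(m+1) \leqslant 0$. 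I plan to prove this by contradiction: assuming $p(m)\,p(m+1) > 0$ for all $m \in [a,b)$, open induction applied to the quantifier-free formula $\psi(x) \equiv (x < a \vee x > b \vee p(x)\,p(a) > 0)$ gives $\psi(b)$, i.e.\ $p(a)\,p(b) > 0$, a contradiction. To single out the interval $[m,m+1)$ actually containing $r$ among the finitely many real roots of $p$, I would use the Sturm sequence of $p$ (which exists in $\mathcal F(\mathcal M)[x]$ by coprimality of $p$ and $p'$); its signs at elements of $M$ are quantifier-free-expressible over $\mathcal M^+$, and comparing the sign-change counts at $m$ and $m+1$ isolates the desired integer part.

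The hard direction is $(\Rightarrow)$: $\mathsf{IOpen}$ lacks the least-element principle, bounded minimization, and general Euclidean division, so each step --- the Cauchy bound, the discrete IVT, and the Sturm-type isolation --- must be derived by a carefully chosen open induction on a quantifier-free formula encoding the relevant sign behavior of $p$ and its Sturm auxiliaries. The most delicate point will be the Sturm isolation: one must verify that the sign-change count and the singling out of the particular root $r$ from the other real roots of $p$ can both be expressed by an open formula amenable to open induction, without appealing to a minimization principle unavailable in $\mathsf{IOpen}$.
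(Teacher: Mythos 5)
The paper does not prove this theorem: it is cited verbatim from Shepherdson's 1964 paper and used as a black box (and then reformulated as \cref{shepherdson_theorem_reformulated} via \cref{theorem_real_closure_exists_and_unique} and \cref{theorem_rcf_are_models_of_Th(R)}). So there is no in-paper proof to compare against. What I can compare is your $(\Leftarrow)$ argument against the paper's proofs of its \emph{analogues}, namely \cref{theorem_(M, exp)_ip_o-minimal}, \cref{theorem_(M, x^y)_ip_o-minimal} and \cref{theorem_M_exp_ip_ExpField+MaxVal}, since your $(\Leftarrow)$ direction for the classical case is exactly the specialization of that technique to $\mathcal R(\mathcal M)$ (where ``quantifier-free definable sets are finite unions of intervals and points'' follows from finiteness of polynomial roots rather than from o-minimality). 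There your iterative scan over the components $I_1,\dots,I_l$ of $A \cap [0,n_0]$ is correct but clumsier than what the paper does in the analogous spot: the paper instead looks at the set where $\neg\varphi$ holds (and $x>0$), takes its \emph{leftmost} component that actually contains an element of $M^+$ with $\neg\varphi$, and immediately reads off the transition point in a two-case analysis with no iteration and no need to argue termination. You might adopt that cleaner formulation.

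On the $(\Rightarrow)$ direction: your overall plan (Cauchy bound in $M$, a discrete intermediate value lemma, and Sturm-type root isolation) is a faithful sketch of Shepherdson's argument, and your proof of the discrete IVT via open induction on $\psi(x) \equiv (x<a \vee x>b \vee p(x)p(a)>0)$ is correct as stated (one should note $p(a)\neq 0$ is forced by $p(a)p(b)<0$, so $\psi(0)$ holds even when $a=0$). Two remarks. First, the discrete IVT alone only hands you \emph{some} $m$ with a sign change of $p$ in $[m,m+1]$, not necessarily the one containing your chosen $r$, so the Sturm (or Thom sign-pattern) isolation you flag is genuinely essential, not optional polish; this is indeed the technical heart of the direction and your proposal leaves it open. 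Second, your closing worry about lacking a minimization principle is misplaced in a useful way: once the predicate ``$m \leqslant r$'' is rendered as an open formula $\varphi(m)$ via Sturm variation counts (with coefficients cleared into $\mathcal M$), that predicate is \emph{semantically downward-closed} in $m$, and you know $\varphi(0)$ and $\neg\varphi(N)$ for the Cauchy bound $N$; the bare contrapositive of the open induction axiom then produces $m$ with $\varphi(m)\wedge\neg\varphi(m+1)$, and downward-closedness makes that $m$ unique and equal to $\lfloor r\rfloor$. No bounded minimization is needed. So the real remaining gap is solely the careful verification that the Sturm-based open formula correctly captures ``$m \leqslant r$'' (handling the corner cases where $m$ or $r$ is itself a root of some Sturm polynomial), not any shortfall in the induction principle available.
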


According to \autoref{theorem_real_closure_exists_and_unique}, \cref{theorem_relative_real_closure} and \autoref{theorem_equiv_defs_of_rcf}, we can reformulate \autoref{shepherdson_theorem} in the following form:

\begin{theorem}\label{shepherdson_theorem_reformulated}
    Let $\mathcal{M}$ be a discretely ordered ring. Then $\mathcal{M}^+ \vDash \mathsf{IOpen}$ iff there exists an ordered field $\mathcal{R}$ such that $\mathcal{M} \subseteq^{\IP} \mathcal{R}$ and $\mathcal{R} \vDash \Th(\mathbb{R})$.
\end{theorem}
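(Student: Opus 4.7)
The plan is to deduce this reformulation directly from Shepherdson's original theorem (\autoref{shepherdson_theorem}) together with \autoref{theorem_rcf_are_models_of_Th(R)} and the uniqueness of real closures (\autoref{theorem_real_closure_exists_and_unique}).

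For the forward direction, assume $\mathcal M^+ \vDash \mathsf{IOpen}$. By \autoref{shepherdson_theorem}, $\mathcal M \subseteq^{IP} \mathcal R(\mathcal M)$, where $\mathcal R(\mathcal M)$ is the real closure of the quotient field of $\mathcal M$. Since $\mathcal R(\mathcal M)$ is real closed, \autoref{theorem_rcf_are_models_of_Th(R)} gives $\mathcal R(\mathcal M) \vDash Th(\mathbb R)$, so we may take $\mathcal R := \mathcal R(\mathcal M)$. This direction is essentially a relabelling.

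For the converse, suppose $\mathcal R$ is an ordered field with $\mathcal M \subseteq^{IP} \mathcal R$ and $\mathcal R \vDash Th(\mathbb R)$. By \autoref{theorem_rcf_are_models_of_Th(R)} again, $\mathcal R$ is real closed. I would then use the standard fact that the relative algebraic closure $\mathcal R'$ of the quotient field $\mathcal F(\mathcal M)$ inside $\mathcal R$ is itself a real closure of $\mathcal F(\mathcal M)$ (it is formally real as a subfield of a real closed field, and every algebraic extension of it sits inside the algebraic closure, where the real closed subfield $\mathcal R$ already captures all formally real algebraic extensions). By the uniqueness part of \autoref{theorem_real_closure_exists_and_unique}, there is an ordered-field isomorphism $\mathcal R(\mathcal M) \cong \mathcal R'$ fixing $\mathcal F(\mathcal M)$, hence an embedding $\iota : \mathcal R(\mathcal M) \hookrightarrow \mathcal R$ fixing $\mathcal M$ pointwise and preserving the order.

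Now I would transfer the integer part property across $\iota$. Given any $r \in \mathcal R(\mathcal M)$, consider $\iota(r) \in \mathcal R$; since $\mathcal M \subseteq^{IP} \mathcal R$, there is $m \in M$ with $m \leqslant \iota(r) < m+1$, and applying $\iota^{-1}$ (which fixes $\mathcal M$) yields $m \leqslant r < m + 1$ in $\mathcal R(\mathcal M)$. Hence $\mathcal M \subseteq^{IP} \mathcal R(\mathcal M)$, and one last application of \autoref{shepherdson_theorem} gives $\mathcal M^+ \vDash \mathsf{IOpen}$. The only non-bookkeeping step is the identification of $\mathcal R'$ with $\mathcal R(\mathcal M)$, which is the main point where one has to invoke the uniqueness of the real closure; everything else is direct.
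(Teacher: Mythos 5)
Your proof is correct and fills in exactly the argument the paper leaves implicit: the paper merely states that this reformulation follows from the uniqueness of real closures and the characterization of real closed fields as models of $Th(\mathbb{R})$, combined with Shepherdson's theorem, and you supply precisely those details. One small tightening: your parenthetical justification that the relative algebraic closure $\mathcal R'$ of $\mathcal F(\mathcal M)$ inside $\mathcal R$ is real closed is hand-wavy as written (the appeal to $\mathcal R$ ``capturing all formally real algebraic extensions'' does not quite parse, since a proper algebraic extension of $\mathcal R'$ cannot embed back into $\mathcal R$ over $\mathcal R'$); the clean argument is that every positive element of $\mathcal R'$ has a square root in $\mathcal R$ and every odd-degree polynomial over $\mathcal R'$ has a root in $\mathcal R$, and any such root is algebraic over $\mathcal F(\mathcal M)$ and hence already lies in $\mathcal R'$ — but the fact you invoke is standard and the overall structure of your proof is sound.
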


\subsection{Extensions of $\mathsf{IOpen}$, exponential fields and exponential integer parts}

Now we define the theories $\mathsf{IOpen}(\exp)$ and $\mathsf{IOpen(x^y)}$, where $\exp$ is a new unary function symbol and $x^y$ is a new binary function symbol.

\begin{definition}
    $\mathsf{IOpen}(\exp)$ is a theory in the language $\mathcal{L}_{\OSR}(\exp)$, consisting of $\mathsf{DOSR}$, axioms for exponentiation
    \begin{enumerate}
        \item[(E1)] $\exp(0) = 1$,
        \item[(E2)] $\exp(x + 1) = \exp(x) + \exp(x)$
    \end{enumerate}
    and the induction scheme for quantifier-free formulas in the language $\mathcal L_{\OSR}(\exp)$.
\end{definition}

\begin{definition}
    $\mathsf{IOpen(x^y)}$ is a theory in the language $\mathcal{L}_{\OSR}(x^y)$, consisting of $\mathsf{DOSR}$, axioms for power function
    \begin{enumerate}
        \item[(P1)] $x^0 = 1$,
        \item[(P2)] $x^{y + 1} = x ^y \cdot x$
    \end{enumerate}
    and the induction scheme for quantifier-free formulas in the language $\mathcal L_{\OSR}(x^y)$.
\end{definition}

\begin{remark}
    $\exp$ stands for the base-2 exponentiation and $x^y$ stands for the power function.
\end{remark}

\begin{definition}
    $\mathsf{T_{x^y}}$ is a theory in the language $\mathcal{L}_{\OSR}(x^y)$, consisting of the following axioms ($1 + 1$ is denoted by $2$ for short):
\begin{enumerate}
    \item[(T1)] $x^0 = 1$,
    \item[(T2)] $x^1 = x$,
    \item[(T3)] $1^x = 1$,
    \item[(T4)] $x^{y + z} = x^y \cdot x^z$,
    \item[(T5)] $(x \cdot y)^z = x^z \cdot y^z$,
    \item[(T6)] $(x^y)^z = x^{yz}$,
    \item[(T7)] $(x > 1) \to (y < z \leftrightarrow x^y < x^z)$,
    \item[(T8)] $(x > 0) \to (y < z \leftrightarrow y^x < z^x)$,
    \item[(T9)] $(x > 0) \to \exists y (2^y \leqslant x < 2^{y + 1})$,
    \item[(T10)] $(y > 0) \to \exists z (z^y \leqslant x < (z + 1)^y)$,
    \item[(T11)] $(x > 0 \wedge y > 0) \to \Big( x^z y + z y^{z + 1} \geqslant y^{z + 1} + z xy^z \Big)$.
\end{enumerate}
\end{definition}

\begin{remark}[1]
    Informally saying, (T9) stands for the existence of the integer part of $\log x$ (base-2 logarithm) for $x > 0$, (T10) stands for the existence of the integer part of $\sqrt[y]{x}$ for $y > 0$. (T11) stands for the Bernoulli inequality, it can be informally written as $$(x > 0 \wedge y > 0) \to \Big( \big(\frac{x}{y}\big)^z \geqslant 1 + z\big(\frac{x}{y} - 1\big)\Big).$$
    We will give this remark a precise meaning in \cref{section_construction_of_K_M}.
\end{remark}

\begin{remark}[2]
    As we will see in \cref{section_ip_of_exp_fields}, $\mathsf{IOpen(x^y) \vdash T_{x^y}}$.
\end{remark}

Also, we will need the following definitions.

\begin{definition}
    $\mathsf{ExpField}$ is a theory in the language $\mathcal{L}_{\OR}(\exp)$, consisting of the theory $\mathsf{OF}$ and the following axioms:
    \begin{enumerate}[start=0,label={(EF\arabic*)}]
        \item $\exp(x) > 0$;
        \item $\exp(x + y) = \exp(x)\exp(y)$;
        \item $(x < y) \to (\exp(x) < \exp(y))$;
        \item $(x > 0) \to \exists y(x = \exp(y))$.
    \end{enumerate}
    Models of $\mathsf{ExpField}$ will be called \emph{exponential fields}.
\end{definition}

\begin{remark}
    It is not very hard to show that in fact (EF0) is provable in $\ExpField$ + (EF1)--(EF2).
\end{remark}

We will often denote exponential fields as $(\mathcal F, \exp)$, where $\mathcal F$ is an ordered field. Also we denote by $\log$ the function $\exp^{-1}\colon F^{>0} \to F$.

\begin{definition}\label{def_x^y}
    Given an exponential field $(\mathcal F, \exp)$ we define a function $x^y\colon F^+ \times F^+ \to F^+$ in the following (obvious) way:
    \begin{itemize}
        \item if $x > 0$, then $x^y := \exp(y \log x)$;
        \item if $y > 0$, then $0^y := 0$;
        \item $0^0 := 1$.
    \end{itemize}
\end{definition}

\begin{definition}\label{def_exp_ip}
    Let $(\mathcal F, \exp)$ be an exponential field, $\mathcal M \subseteq \mathcal F$ be an ordered ring. Then $\mathcal M$ is called an \textit{exponential integer part} of $(\mathcal F, \exp)$ if $\mathcal M \subseteq^{\IP} \mathcal F$ and $M^+$ is closed under $\exp$ (i.e. for all $m \in M^+$ we have $\exp(m) \in M^+$). Notation: $\mathcal M \subseteq^{\IP}_{\exp} (\mathcal F, \exp)$.
\end{definition}

\begin{definition}\label{def_x^y_ip}
    Let $(\mathcal F, \exp)$ be an exponential field, $\mathcal M \subseteq \mathcal F$ be an ordered ring. Then $\mathcal M$ is called an \textit{$x^y$-integer part} of $(\mathcal F, \exp)$ if $\mathcal M \subseteq^{\IP} \mathcal F$ and $M^+$ is closed under $x^y$ (i.e. for all $n, m \in M^+$ we have $m^n \in M^+$). Notation: $\mathcal M \subseteq^{\IP}_{x^y} (\mathcal F, \exp)$.
\end{definition}

\begin{remark}
    Note that in \cref{def_x^y_ip}, the base of exponentiation $\exp(1)$ does not have to be an ``integer''. This is in contrast to \cref{def_exp_ip}, where the base $\exp(1)$ is required to lie in $M^+$. Typically, in the context of \cref{def_exp_ip}, we have $\exp(1) = 2$.
\end{remark}

\subsection{Khovanskii's Theorem and O-minimal structures}

We will need one important result by A. Khovanskii (see \cite{khovankii}). To formulate it we need the following definition.

\begin{definition}
    A sequence of differentiable functions $(f_1, \dots, f_k)$, $f_i\colon \R^{n} \to \R$, is called a \emph{Pfaffian chain} if 
    for all $i = 1, \dots, k$ and $j = 1, \dots, n$ there is a polynomial $p_{i, j} \in \R[X_1, \dots, X_n, Y_1, \dots, Y_i]$ such that for all $\overline x = (x_1, \dots, x_n) \in \R^n$ we have
    $$\frac{\partial f_i}{\partial x_j}(\overline x) = p_{i, j}(\overline x, f_1(\overline x), \dots, f_i(\overline x)).$$
    
    A \emph{Pfaffian equation} is an equation of the form 
    $$p(\overline x, f_1(\overline x), \dots, f_k(\overline x)) = 0,$$
    where $(f_1, \dots, f_k)$ is a Pfaffian chain and $p \in \R[X_1, \dots, X_n, Y_1, \dots, Y_k]$. \emph{Complexity} of a given Pfaffian equation is a sequence of the following numbers: $n$, $k$, $(\deg p_{i, j})_{i, j}$ and $\deg p$.
\end{definition}

\begin{theorem}[{\cite[Theorem 4]{khovankii}}]\label{theorem_khovanskii}
    Given a Pfaffian equation $p(\overline x, f_1(\overline x), \dots, f_k(\overline x)) = 0$ there is a number $N \in \N$ such that the set of solutions 
    $$\{\overline x \in \R^n \mid  p(\overline x, f_1(\overline x), \dots, f_k(\overline x)) = 0\}$$
    has no more than $N$ connected components. Moreover, $N$ can be found effectively from the complexity of the equation.
\end{theorem}

\begin{corollary}\label{corollary_khovaskii}
    Given a Pfaffian equation $p(x, \overline y, f_1(x, \overline y), \dots, f_k(x, \overline y)) = 0$ the set 
    $$\{x \in \R \mid \exists \overline y \in \R^n\colon p(x, \overline y, f_1(x, \overline y), \dots, f_k(x, \overline y)) = 0\}$$
    is a union of no more than $N$ intervals and points, where $N$ is from the theorem above for the equation 
    $$p(x, \overline y, f_1(x, \overline y), \dots, f_k(x, \overline y)) = 0.$$
\end{corollary}

Now suppose that $p(x, \overline y, f_1(x, \overline y), \dots, f_k(x, \overline y))$ is expressible by a term $t(x, \overline y, \overline a)$ with parameters $\overline a$ from $\R$ in some expansion of the field of real numbers (for instance, in $\mathbb{R}_{\exp} := (\R, +, -, \cdot, 0, 1, \leqslant, e^x)$). Then \cref{corollary_khovaskii} can be expressed in the language of this expansion by the following formula:
$$\bigvee\limits_{N' = 0}^N \bigvee\limits_{c = 0}^{N'} \exists z_1 \dots \exists z_c \exists r_1 \dots \exists r_{N' - c} \exists l_1 \dots \exists l_{N' - c} \forall x (\exists \overline y(t(x, \overline y, \overline a) = 0) \leftrightarrow (\bigvee\limits_{i = 1}^c (x = z_i) \vee \bigvee\limits_{i = 1}^{N' - c} (l_i < x < r_i))),$$
where $N'$ stands for the total number of intervals and points, $c$ and $N' - c$ stand for the number of points and the number of intervals respectively, $z_1, \dots, z_c$ are these points and $(l_1, r_1), \dots, (l_{N' - c}, r_{N' - c})$ are these intervals. Let us denote the universal closure of this formula as $\mathtt{KTB}_t^N$ (KTB means ``Khovanskii's Theorem bound''). So, we have that $\mathtt{KTB}_t^N$ holds in the expansion under consideration.

Next, it is easy to see that for all $\mathcal L_{\OR}(\exp)$-terms with parameters $t(x, \overline y, \overline a)$ the equation $t(x, \overline y, \overline a) = 0$ is a Pfaffian equation, it can be shown by induction on $t$. Hence, $\mathtt{KTB}_t^N \in \Th(\R_{\exp})$. This motivates the following definition.

\begin{definition}
    $\mathsf{KTB}$ is a scheme in the language $\mathcal L_{\OR}(\exp)$ of the sentences $\mathtt{KTB}_t^N$ for all $\mathcal L_{\OR}(\exp)$-terms $t$ and $N$ from \cref{corollary_khovaskii}.
\end{definition}

\begin{remark}
    As we have already shown, $\mathsf{KTB} \subseteq \Th(\R_{\exp})$. Moreover, $\mathsf{KTB}$ is recursive. Next, it is not very hard to see that $\mathsf{ExpField + KTB} \vdash \mathsf{RCF}$, since in every model $(\mathcal R, \exp) \vDash \mathsf{ExpField + KTB}$, for every polynomial $p(X) \in \mathcal R[X]$, the set $\{x \in R\mid  p(x) > 0\}$ is a finite union of intervals and points (this easily implies $\mathsf{IntVal}(\mathcal L_{\OR})$). 
\end{remark}

\section{Integer parts of exponential fields}\label{section_ip_of_exp_fields}

In this section we obtain some sufficient conditions for discretely ordered semirings with exponentiation or power function to be a model of a certain extension of $\mathsf{IOpen}$.

\begin{theorem}\label{theorem_M_exp_ip_ExpField+MaxVal}
    Let an exponential field $(\mathcal{R}, \exp)$ be a model of $\mathsf{ExpField} + \mathsf{MaxVal}(\mathcal L_{\OR}(\exp)) + \exp(1) = 2$ and $\mathcal{M} \subseteq^{\IP}_{\exp} (\mathcal{R}, \exp)$. Then $(\mathcal{M}^+, \exp) \vDash \mathsf{IOpen}(\exp)$.
\end{theorem}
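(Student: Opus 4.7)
The plan is to mimic the proof of \cref{theorem_(M, exp)_ip_o-minimal} and substitute $\mathsf{MaxVal}(\exp)$ for o-minimality at the only step where the latter is used. Replace each $S(t)$ in the quantifier-free $\mathcal L_{Ar}(\exp)$-formula $\varphi(x, \bar y)$ by $t+1$ to obtain an $\mathcal L_{OR}(\exp)$-formula $\psi(x, \bar y)$; the axioms $\mathsf Q$ and (E1)--(E2) hold trivially in $(\mathcal M^+, \exp)$, so only the quantifier-free induction scheme is at stake. Fix $\bar a \in (M^+)^n$ and assume, for contradiction, that $\psi(0, \bar a)$ holds, that $\neg\psi(n_0, \bar a)$ for some $n_0 \in M^+$, and that no $m \in M^+$ realises $\psi(m, \bar a) \wedge \neg\psi(m+1, \bar a)$; the task is to exhibit such an $m$.

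In \cref{theorem_(M, exp)_ip_o-minimal} o-minimality was used to decompose $X_\psi(\bar a) = \{x \in R : x > 0,\ \neg\psi(x, \bar a)\}$ into finitely many intervals and points and to select the leftmost component meeting $M^+$. I plan to recover the required combinatorial structure from $\mathsf{MaxVal}(\exp)$. List the terms $t_1(x), \ldots, t_k(x)$ appearing in $\psi$; then $\psi(x, \bar a)$ depends only on the sign pattern $\sigma(x) = (\mathrm{sgn}\, t_1(x, \bar a), \ldots, \mathrm{sgn}\, t_k(x, \bar a))$. It suffices to produce a finite chain $0 = p_0 < p_1 < \ldots < p_N = n_0$ of elements of $M^+$ on each of whose open cells $(p_j, p_{j+1})$ the sign pattern $\sigma$ is constant: then, choosing $j$ with $\psi(p_j, \bar a) \wedge \neg\psi(p_{j+1}, \bar a)$, the two subcases of the proof of \cref{theorem_(M, exp)_ip_o-minimal} transfer directly, producing $m = p_j$ if the open cell is $\neg\psi$-valued and $m = p_{j+1} - 1$ if it is $\psi$-valued (and $m = p_j$ in the degenerate case $p_{j+1} = p_j + 1$).

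To build the chain I would apply $\mathtt{MaxVal}_{t_i}$ and its counterpart $\mathtt{MaxVal}_{-t_i}$ to each $t_i$ on $[0, n_0]$, obtaining finitely many argmax points in $\mathcal R$, and round each down through the integer-part map to land in $M^+$. Combining these with $0$ and $n_0$ gives a first approximation of the partition. Continuity of every term --- automatic because $+$, $\cdot$, and the monotone bijection $\exp$ are continuous in the order topology, with monotonicity secured by $\mathsf{ExpField}$ and anchored by $\exp(1) = 2$ --- constrains how $\sigma$ may fluctuate between consecutive cut points.

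The main obstacle is finiteness of this extraction: unlike o-minimality, $\mathsf{MaxVal}(\exp)$ does not directly bound the number of local sign changes of a composite term, so the first pass of candidate cuts may leave cells on which $\sigma$ still fluctuates. I would address this by induction on the build-up of the $t_i$ from $+$, $\cdot$ and $\exp$: for a polynomial combination one iterates MaxVal on the subterms and on their sums and products; for a composition $\exp(s(x))$ monotonicity of $\exp$ transfers control of sign changes to control over $s$. If some open cell still contains two $M^+$-points of opposite $\psi$-value, that pair supplies a new cut that strictly refines the partition, and one argues that after finitely many refinements either the partition becomes constant-sign on all cells, or two consecutive $M^+$-points with opposite $\psi$-value surface and furnish $m$ directly. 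Closing this recursion under only $\mathsf{ExpField} + \mathsf{MaxVal}(\exp) + \exp(1) = 2$ is the delicate technical heart of the argument.
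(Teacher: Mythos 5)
Your overall strategy --- reduce to producing a finite partition of $[0, n_0]$ into cells on which the sign pattern of the terms of $\psi$ is constant, then pick the cut where $\psi$ flips --- has the right shape and matches the skeleton of the paper's argument. But the step you yourself flag as ``the delicate technical heart'' is not merely delicate; as written it is a genuine gap, and the iterative refinement you describe has no termination guarantee. Applying $\mathtt{MaxVal}_{t_i}$ and $\mathtt{MaxVal}_{-t_i}$ yields at most one extremum each, which bounds nothing about how often $t_i$ can change sign; without an a priori bound on sign changes, the claim that ``after finitely many refinements the partition stabilises'' is precisely what needs proof, not an observation. An induction on the syntactic build-up of the terms does not close this either: multiplication and $\exp$ interact in a way that prevents the subterm data from directly controlling the number of sign changes of the compound term, so the naive structural induction does not go through.

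What the paper does instead is import van den Dries's theory of exponential polynomials. The crucial ingredients (Lemmas 1.5--1.8) are a formal derivation on $\mathcal F[X]^E$, an \emph{ordinal-valued} complexity $ord$ with the property that for any nonzero $p$ either $ord(p') < ord(p)$ or $ord(p \cdot E(q)) < ord(p)$ for some $q$, and the finiteness-of-roots theorem assuming $\mathsf{MaxVal}$ together with the growth inequality $E(x) \geqslant 1 + x$. \cref{lemma_about_models_of_MaxVal(exp)} then carries the weight: part~(i) extracts $a \in R$ with $\exp(ax) \geqslant 1 + x$ using $\mathsf{MaxVal}$ and $\exp(1) = 2$; part~(ii) shows $\exp$ is differentiable with $\exp' = a^{-1}\exp$; part~(iii) deduces that every nonzero exponential polynomial has finitely many roots; and parts~(iv)--(v) show --- by induction on $ord$, not on syntactic shape --- that $\mathsf{MaxVal}(\exp)$ yields $\mathsf{IntVal}(\exp)$ and hence a finite interval-and-point decomposition of $\{x : t(x, \overline a) \leqslant 0\}$. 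It is the well-founded, ordinal-valued $ord$ that supplies exactly the termination your refinement procedure lacks; to salvage your proposal you would need either to prove that your refinement terminates, which in effect is reproving the finitely-many-roots theorem, or to invoke that machinery directly as the paper does.
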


Before the proof let us cite the following lemmas by L. van den Dries. Here $(\mathcal F, E)$ is an arbitrary exponential field with exponentiation $E$ and by $\mathcal F[X]^E$ we denote the ring of exponential polynomials over $(\mathcal F, E)$. The definition of the latter can be found in \cite[1.1]{vandendries:1984}. Informally saying, $\mathcal F[X]^E$ is a structure that contains the polynomial ring $\mathcal F[X]$ and is closed under $E$. By $\mathcal F[x]^E$ we denote the ring of exponential functions, i.e., the least set of functions that contains $F$, $\id_F$, and is closed under $+$, $\cdot$ and $E$. For an exponential polynomial $p \in \mathcal F[X]^E$ one can define the exponential function $\hat p \in \mathcal F[x]^E$ such that $\hat p(x)$ equals the value of the exponential polynomial $p$ at $x$.

\begin{lemma}[{\cite[Lemma 3.2]{vandendries:1984}}]\label{lemma_derivation_in_exp_field}
    For all $r \in F$ there exists a unique formal\footnote{that is, an additive operator satisfying Leibniz's law $(p q)' = p q' + p' q$} derivative $'\colon \mathcal F[X]^E \to \mathcal F[X]^E$ such that $'$ is trivial on $F$, $X' = 1$ and $E(p)' = r \cdot p' \cdot E(p)$.
\end{lemma}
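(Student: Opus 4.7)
The plan is to construct $\mathcal F[X]^E$ as an ascending union $R_0 \subseteq R_1 \subseteq \cdots$ of subrings, with $R_0 = \mathcal F[X]$, and define $'$ inductively on each $R_n$. Present $R_{n+1}$ as the quotient of the polynomial ring $R_n[T_p : p \in R_n]$ by the ideal $I_n$ generated by $T_0 - 1$ and all elements $T_{p+q} - T_p T_q$ for $p, q \in R_n$; under this presentation the symbol $T_p$ corresponds to $E(p)$, so that $\mathcal F[X]^E = \bigcup_{n \geqslant 0} R_n$. On $R_0 = \mathcal F[X]$ set $'$ to be the ordinary formal derivative, which is the unique derivation trivial on $F$ with $X' = 1$.

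Given $'$ already defined on $R_n$, first extend it to the polynomial ring $R_n[T_p : p \in R_n]$ by declaring $T_p' := r \cdot p' \cdot T_p$ for every $p \in R_n$. By the universal property of polynomial rings over a differential ring, this prescription extends uniquely to a derivation on the whole polynomial ring. The key step is then to check that this derivation sends the ideal $I_n$ into itself, so that it descends to $R_{n+1}$. For the generator $T_0 - 1$ one computes $(T_0 - 1)' = r \cdot 0' \cdot T_0 = 0 \in I_n$. For a generator $T_{p+q} - T_p T_q$ a direct calculation, using that $'$ is already additive and Leibniz on $R_n$, gives
\[
(T_{p+q})' - (T_p T_q)' = r(p'+q') T_{p+q} - r(p'+q') T_p T_q = r(p'+q')\,(T_{p+q} - T_p T_q) \in I_n.
\]
Hence $'$ descends to $R_{n+1}$, and taking the union over $n$ produces the desired derivation on $\mathcal F[X]^E$.

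For uniqueness, any derivation $d$ satisfying the three listed conditions is automatically $F$-linear (from the Leibniz rule together with $d|_F = 0$) and is therefore determined by its values on the generating set $F \cup \{X\} \cup \{E(p) : p \in \mathcal F[X]^E\}$. The prescribed values $d(X) = 1$ and $d(E(p)) = r \cdot d(p) \cdot E(p)$, together with an induction on $n$ and on term complexity within each $R_n$, force any two such derivations to agree on every $R_n$ and hence on $\mathcal F[X]^E$.

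The main obstacle is the compatibility check at each inductive step: the formal rule $E(p)' = r \cdot p' \cdot E(p)$ must be consistent with the relation $E(p+q) = E(p)E(q)$. Fortunately this is automatic, since the discrepancy between the two natural ways of differentiating $E(p+q)$ is precisely $r(p'+q')$ times the defining relation itself and therefore lies in $I_n$. A secondary point to verify carefully is that the chain $(R_n)$ described above really reproduces the ring $\mathcal F[X]^E$ of \cite[1.1]{vandendries:1984}; once this identification is in place, the construction of $'$ is routine.
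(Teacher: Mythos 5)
The paper does not prove this lemma at all; it is simply quoted from van den Dries \cite[Lemma 3.2]{vandendries:1984}, so there is no in-paper argument to compare against. Evaluated on its own terms, your sketch captures the right idea (build the derivation up the chain defining $\mathcal F[X]^E$, and check at each stage that the defining ideal is stable), and the key computation $(T_{p+q}-T_pT_q)' = r(p'+q')(T_{p+q}-T_pT_q)$ is correct. However, there is a genuine gap in the construction of the chain itself. Your $R_{n+1}$, as the quotient of $R_n[T_p:p\in R_n]$ by $T_0-1$ and $T_{p+q}-T_pT_q$ only, is the group algebra $R_n[(R_n,+)]$. This does not enforce that $T_p$ equals the already-defined $E_n(p)$ when $p$ lies in the domain $A_n$ of the partial exponential at stage $n$; already at stage one, for $c\in F$ your $T_c$ is a fresh transcendental rather than the element $E(c)\in F$, so the chain does not reproduce $\mathcal F[X]^E$ as defined in \cite[1.1]{vandendries:1984}. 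Van den Dries avoids this by using the group ring over the quotient group $R_n/A_n$, and defining the new exponential by $E_{n+1}(a+b)=E_n(a)\,t^{[b]}$ on the decomposition $R_n=A_n\oplus($complement$)$.

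This is not a secondary bookkeeping point, as you suggest in your last paragraph, but a real defect: without the extra relations the derivation you construct lives on the wrong ring, so neither the existence nor the uniqueness claim has been established for $\mathcal F[X]^E$. The repair is short and in the spirit of what you already do: enlarge $I_n$ by the generators $T_p - E_n(p)$ for $p\in A_n$, and observe that, by the inductive hypothesis $E_n(p)'=r\,p'\,E_n(p)$, one has $(T_p-E_n(p))' = r\,p'\,T_p - r\,p'\,E_n(p)=r\,p'(T_p-E_n(p))\in I_n$, so the derivation still descends. Once this is patched, your uniqueness argument (induction on the stage $n$ together with the fact that $R_{n+1}$ is generated over $R_n$ by the images of the $T_p$, whose $d$-values are forced) goes through.
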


\begin{lemma}[{\cite[Lemma 3.3]{vandendries:1984}}]\label{lemma_ord_in_exp_field}
    There exists a map $\ord \colon \mathcal F[X]^E \to \Ord$ such that 
    \begin{enumerate}[(i)]
        \item $\ord(p) = 0$ iff $p = 0$;
        \item for all nonzero $p \in \mathcal F[X]^E$ either $\ord(p') < \ord(p)$ or there exists $q \in \mathcal F[X]^E$ such that $\ord(p \cdot E(q)) < \ord(p)$.
    \end{enumerate}
    Here $p'$ denotes the formal derivative from \cref{lemma_derivation_in_exp_field} for some $r$ and $\Ord$ denotes the class of ordinals.
\end{lemma}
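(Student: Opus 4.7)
The plan is to define $ord$ as the well-founded rank of the two-step reduction on $\mathcal F[X]^E$, so that the required dichotomy becomes tautological from the construction. Set $S_0 := \{0\}$, $S_\lambda := \bigcup_{\beta<\lambda} S_\beta$ at limit $\lambda$, and
\[ S_{\alpha+1} := S_\alpha \cup \{\, p \in \mathcal F[X]^E : p' \in S_\alpha \,\} \cup \{\, p : \exists q \in \mathcal F[X]^E,\ p\cdot E(q) \in S_\alpha \,\}, \]
then put $ord(p) := \min\{\alpha : p \in S_\alpha\}$. By construction $ord(p)=0 \Leftrightarrow p=0$, and for any nonzero $p$ the minimality of $ord(p) = \alpha+1$ forces either $p' \in S_\alpha$ (so $ord(p')<ord(p)$) or $p\cdot E(q)\in S_\alpha$ for some $q$ (so $ord(p\cdot E(q))<ord(p)$), which is exactly the dichotomy demanded by the lemma.

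\textbf{Key remaining step.} What actually requires work is the totality assertion $\mathcal F[X]^E = \bigcup_\alpha S_\alpha$, i.e.\ that every exponential polynomial is eventually captured. I would proceed by induction on exponential depth $d(p)$, using the natural presentation $\mathcal F[X]^E = \bigcup_{n<\omega} R_n$ with $R_0 = F[X]$ and $R_{n+1}$ obtained from $R_n$ by freely adjoining symbols $E(q)$ for $q \in R_n$ modulo the exponential-homomorphism relations, so that monomials at depth $n+1$ take the canonical form $\mathbf{E}^\alpha := E(q_1)^{\alpha_1}\cdots E(q_k)^{\alpha_k} = E(\sum_j \alpha_j q_j)$ for a $\mathbb Q$-linearly independent tuple $q_1,\dots,q_k \in R_n$. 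At depth $0$, the formal derivative from \cref{lemma_derivation_in_exp_field} strictly reduces ordinary degree, so any $p\in F[X]$ lands in $S_{\deg p+1}$. At depth $d+1$, canonically write $p = \sum_{\alpha\in\Lambda(p)}c_\alpha \mathbf{E}^\alpha$ with $c_\alpha\in R_d$ nonzero, and do a secondary induction on $|\Lambda(p)|$.

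\textbf{Sub-cases.} If $|\Lambda(p)|=1$, the monomial $p = c\cdot \mathbf{E}^\alpha$ is killed by a single exponential shift: taking $q := -\sum_j \alpha_j q_j$ yields $p\cdot E(q) = c \in R_d$, which lies in some $S_\gamma$ by the primary IH. If $|\Lambda(p)|\ge 2$, the goal is to drop $|\Lambda(p)|$ by one through a finite chain of derivations and shifts. The scheme is: pick $\alpha^{(0)}\in\Lambda(p)$, shift by $E(-\sum_j\alpha_j^{(0)}q_j)$ to put $1$ in the support with coefficient $c_{\alpha^{(0)}}$, then exploit the fact that $c_{\alpha^{(0)}}\in R_d$ already has finite $ord$ (primary IH) to iteratively differentiate and shift until the constant term is annihilated, after which the secondary IH applies to the resulting polynomial of strictly smaller support.

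\textbf{Main obstacle.} The genuine difficulty is that a single derivation or exponential shift acts on \emph{all} top-level coefficients simultaneously: differentiating $\tilde p = c_0 + \sum_i c_i \mathbf{E}^{\beta_i}$ using \cref{lemma_derivation_in_exp_field} produces $\tilde p' = c_0' + \sum_i[c_i' + r c_i(\beta_i\cdot q')]\mathbf{E}^{\beta_i}$, so one cannot simplify the constant term $c_0$ without perturbing every $c_i$. What actually goes through is a linear-algebra argument over $R_d$: the iterated derivatives $\tilde p, \tilde p', \tilde p'',\dots$ span a finite-dimensional submodule of the depth-$(d{+}1)$ module with basis $\{\mathbf{E}^\alpha : \alpha\in \Lambda(\tilde p)\}$, and after sufficiently many derivations one can $R_d$-linearly recombine them to isolate the $\mathbf{E}^{\alpha^{(0)}}$-coefficient and reduce to strictly lower depth. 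Getting the exact ordinal bookkeeping for this linear-algebra-plus-normal-form step right is the substance of van den Dries's argument in \cite[Lemma 3.3]{vandendries:1984}; defining $ord$ via the hierarchy $\{S_\alpha\}$ makes the dichotomy free but pushes essentially all the combinatorial work into the totality assertion.
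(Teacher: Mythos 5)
Your rank-of-the-hierarchy definition of $ord$ does make the dichotomy and the equivalence $ord(p)=0\Leftrightarrow p=0$ automatic, but it pushes the entire content of the lemma into the totality claim $\mathcal F[X]^E=\bigcup_\alpha S_\alpha$, i.e.\ that every exponential polynomial can be brought to $0$ by a finite sequence of the two admissible moves $p\mapsto p'$ and $p\mapsto p\cdot E(q)$ --- and exactly there your argument has a genuine gap. In the crucial case $|\Lambda(p)|\ge 2$ your plan (``iteratively differentiate and shift until the constant term is annihilated'') is the naive plan that you yourself observe to fail, since one derivation perturbs all top-level coefficients simultaneously; and the proposed repair --- an $R_d$-linear recombination of the iterated derivatives $\tilde p,\tilde p',\tilde p'',\dots$ --- is not a legitimate step inside your own framework: membership in $S_\alpha$ propagates only along the two moves, and the hierarchy is not closed under $R_d$-linear combinations, so knowing that some combination of iterated derivatives has smaller depth says nothing about $\tilde p$ itself lying in $\bigcup_\alpha S_\alpha$. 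The further claim that such a recombination ``reduces to strictly lower depth'' is asserted, not proved, and you explicitly defer ``the substance'' to the cited source. A smaller point: the derivation of \cref{lemma_derivation_in_exp_field} depends on the parameter $r$, so you must fix one $r$ (in the later applications $r\ne 0$) before defining $S_{\alpha+1}$, otherwise the first clause of your definition is ambiguous.

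For context, the paper itself does not prove this statement; it quotes it from \cite[Lemma 3.3]{vandendries:1984}. Van den Dries's argument distributes the work in the opposite way from yours: $ord$ is defined explicitly, by recursion on the level at which $p$ appears in the group-ring presentation of $\mathcal F[X]^E$, from data such as the number of top-level monomials $c_iE(a_i)$ and ordinal data attached to distinguished coefficients; totality is then built into the definition, and the work consists in verifying directly that for each nonzero $p$ one of the two moves strictly lowers this explicit measure (a single shift by a suitable $E(q)$ when there is one top-level monomial, and a controlled use of differentiation, which cannot increase the number of top-level monomials, when there are several). If you want a self-contained proof along your lines, you would have to carry out essentially that same bookkeeping to establish totality; nothing in your sketch currently replaces it, so what you have is a clean reformulation of the lemma rather than a proof of it.
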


\begin{lemma}[{\cite[Proposition 3.4]{vandendries:1984}}]\label{lemma_p'=0}
    For all $p \in \mathcal F[X]^E$ we have $p' = 0$ iff $p \in F$, where $p'$ denotes the formal derivative from \cref{lemma_derivation_in_exp_field} for some $r \ne 0$.
\end{lemma}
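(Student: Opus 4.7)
The forward direction is immediate from \cref{lemma_derivation_in_exp_field}: if $p \in F$, then since the derivation $'$ is trivial on $F$ by construction, $p' = 0$. This direction does not use $r \neq 0$.

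For the converse, I would argue by transfinite induction on $ord(p)$, where $ord$ is the rank function of \cref{lemma_ord_in_exp_field}. The statement to be proved inductively is: for all $p \in \mathcal{F}[X]^E$, if $p' = 0$ then $p \in F$. The base case $ord(p) = 0$ is trivial, since by \cref{lemma_ord_in_exp_field} it forces $p = 0 \in F$. For the inductive step, suppose $p \neq 0$, $p' = 0$, and the claim holds for all elements of strictly smaller $ord$. By \cref{lemma_ord_in_exp_field} applied to $p$, one of two alternatives holds: either $ord(p') < ord(p)$, which is vacuously satisfied (since $p' = 0$ gives $ord(p') = 0$) and supplies no inductive leverage, or there exists $q \in \mathcal{F}[X]^E$ with $ord(p \cdot E(q)) < ord(p)$. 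The substantive information comes from this second alternative.

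Set $u := p \cdot E(q)$. Using the Leibniz rule together with the defining relation $E(q)' = r \cdot q' \cdot E(q)$ from \cref{lemma_derivation_in_exp_field} and the hypothesis $p' = 0$, one computes
\[ u' = p' \cdot E(q) + p \cdot r \cdot q' \cdot E(q) = r \cdot q' \cdot u. \]
The plan is to iterate the reduction of \cref{lemma_ord_in_exp_field} on $u$ and its successive images, exploiting well-foundedness of ordinals for termination, while tracking how the condition $p' = 0$ propagates along the reduction. Multiplying $p$ by $E(q)$ perturbs the derivative only by the factor $r q'$, and a canonical-form decomposition of $p$ as $\sum_\alpha P_\alpha E(q_\alpha)$ with the $P_\alpha$ and $q_\alpha$ of strictly smaller rank and the $E(q_\alpha)$ algebraically independent over the subring generated by the lower-rank elements allows one to split the derivative: $p' = \sum_\alpha (P_\alpha' + r q_\alpha' P_\alpha) E(q_\alpha) = 0$ forces each coefficient $P_\alpha' + r q_\alpha' P_\alpha$ to vanish individually. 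Applying the inductive hypothesis to these coefficients rules out every summand except the trivial one, leaving $p \in F$.

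The main obstacle, and the technical heart of the argument, is the normal-form and independence statement used in the splitting step: one must know that after a suitable canonical decomposition, the factors $E(q_\alpha)$ are algebraically independent over the subring generated by elements of strictly lower rank, so that $p' = 0$ genuinely decomposes coefficient-by-coefficient. This is exactly what \cref{lemma_ord_in_exp_field} is engineered to encode, but converting its disjunctive reduction into a clean term-by-term annihilation requires careful book-keeping of how $r \cdot q_\alpha'$ interacts with $P_\alpha'$ at each rank. The hypothesis $r \neq 0$ is used precisely at this coefficient-vanishing step: if $r$ were $0$, the equations would collapse to $P_\alpha' = 0$ and one could no longer distinguish the harmless summands with $q_\alpha \in F$ from nontrivial ones, so the inductive descent would fail.
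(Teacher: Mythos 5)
The forward direction is fine. For the converse, however, your induction never gets started, for a concrete logical reason: \cref{lemma_ord_in_exp_field} only asserts a disjunction, and when $p' = 0$ the first disjunct $ord(p') < ord(p)$ is automatically true for nonzero $p$ (as you yourself note), so the lemma gives you no right to a $q$ with $ord(p \cdot E(q)) < ord(p)$; everything after ``Set $u := p \cdot E(q)$'' rests on an unavailable hypothesis. The second, deeper gap is the one you flag yourself: the ``canonical-form decomposition'' $p = \sum_\alpha P_\alpha E(q_\alpha)$ with the $E(q_\alpha)$ independent over a subring of lower-rank elements is not contained in \cref{lemma_ord_in_exp_field}, which supplies only an ordinal-valued rank with a reduction property; that independence comes from the explicit construction of $\mathcal F[X]^E$ as an increasing union of group rings (each stage a group ring over the previous one), and this construction is nowhere invoked in your argument. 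Even granting the decomposition, the coefficient equations you obtain are $P_\alpha' + r\, q_\alpha' P_\alpha = 0$, a twisted equation rather than $P_\alpha' = 0$, so the inductive hypothesis ``zero derivative implies constant'' does not apply to them directly; one needs an auxiliary statement (roughly, that $-r q_\alpha'$ with $q_\alpha \notin F$ cannot be a logarithmic derivative of lower-rank data) which you neither formulate nor prove.

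For comparison: the paper gives no proof of this lemma at all; it is quoted as Proposition~3.4 of \cite{vandendries:1984}, where the argument is an induction along the group-ring construction of $\mathcal F[X]^E$, using linear independence of the monomials $E(b)$ over the preceding stage, and that is exactly where $r \ne 0$ is used to kill the nontrivial summands. So your instinct about where the technical heart lies is correct, but as written the proposal is an outline with that heart missing, compounded by the unjustified appeal to the second disjunct of \cref{lemma_ord_in_exp_field}.
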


\begin{lemma}[{\cite[Proposition 4.1]{vandendries:1984}}]\label{lemma_p->hat_p_is_isomorphism}
    The map $p \mapsto \hat p$ is an isomorphism between $\mathcal F[X]^E$ and $\mathcal F[x]^E$.
\end{lemma}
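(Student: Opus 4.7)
Both $\mathcal F[X]^E$ and $\mathcal F[x]^E$ are constructed inductively as the least subrings of their ambient structures that contain $F$ together with a distinguished element ($X$, respectively $\mathrm{id}_F$) and are closed under $+$, $\cdot$, and $E$; the map $p \mapsto \hat p$ sends generator to generator and commutes with these three operations by definition, so it is a ring homomorphism. Surjectivity is immediate, since the image is a subring of $\mathcal F[x]^E$ containing $F$ and $\hat X = \mathrm{id}_F$ and closed under $+$, $\cdot$, $E$, so by minimality of $\mathcal F[x]^E$ it is all of $\mathcal F[x]^E$. The whole content lies in injectivity: one must show $\hat p = 0 \Rightarrow p = 0$.

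For injectivity I would argue by transfinite induction on the ordinal-valued invariant $\mathrm{ord}$ supplied by \cref{lemma_ord_in_exp_field}, having fixed once and for all the formal derivation from \cref{lemma_derivation_in_exp_field} for some $r \neq 0$, so that \cref{lemma_p'=0} is available. Suppose for contradiction that there is a nonzero $p \in \mathcal F[X]^E$ with $\hat p = 0$, chosen so that $\mathrm{ord}(p)$ is minimal among such counterexamples. By \cref{lemma_ord_in_exp_field} one of two cases holds. (a) There is $q$ with $\mathrm{ord}(p \cdot E(q)) < \mathrm{ord}(p)$: then $\widehat{p \cdot E(q)} = \hat p \cdot \widehat{E(q)} = 0$, so by minimality $p \cdot E(q) = 0$; but $E(q) \cdot E(-q) = E(0) = 1$ makes $E(q)$ a unit, giving $p = 0$, a contradiction. (b) $\mathrm{ord}(p') < \mathrm{ord}(p)$: if one can show $\widehat{p'} = 0$, then minimality forces $p' = 0$, whence \cref{lemma_p'=0} yields $p \in F$; combined with $\hat p(0) = p$ and $\hat p = 0$, this gives $p = 0$, again a contradiction.

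Thus everything reduces to the implication $\hat p = 0 \Longrightarrow \widehat{p'} = 0$, i.e.\ to showing that the kernel of $p \mapsto \hat p$ is closed under the formal derivation. My plan here is to exploit the shift automorphisms $\tau_a : \mathcal F[X]^E \to \mathcal F[X]^E$ for $a \in F$, defined as the unique ring- and $E$-preserving maps that fix $F$ and send $X$ to $X + a$ (they exist and are invertible by the same universal construction that produces $\mathcal F[X]^E$, with $\tau_a^{-1} = \tau_{-a}$). Since $\widehat{\tau_a(p)}(x) = \hat p(x + a) = 0$, each element $\tau_a(p) - p$ lies in the kernel of $p \mapsto \hat p$; a comparison of $\mathrm{ord}$ under $\tau_a$ (with the ideal target $\mathrm{ord}(\tau_a(p) - p) < \mathrm{ord}(p)$ whenever $\tau_a(p) \neq p$) combined with minimality should then force vanishing of these differences, from which $p'$, as the infinitesimal witness of how $\tau_a(p)$ deviates from $p$ in $a$, can itself be identified as a kernel element.

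The main obstacle is precisely this last step: extracting $\widehat{p'} = 0$ from the translated data $\widehat{\tau_a(p) - p} = 0$ in a way that does not circularly invoke the isomorphism being proved. Two routes seem viable. The first is the shift/finite-difference approach sketched above, which hinges on a precise estimate of how $\mathrm{ord}$ interacts with the shifts $\tau_a$; this is the technical heart. The second is a direct recursive construction of a derivation $D$ on $\mathcal F[x]^E$ with $D(\mathrm{id}_F) = 1$ and $D(E \circ f) = r \cdot D(f) \cdot (E \circ f)$, where the crux is to verify well-definedness — that $D$ does not depend on the formal expression chosen to represent a given exponential function — after which $\widehat{p'} = D(\hat p) = 0$ would follow from the uniqueness in \cref{lemma_derivation_in_exp_field} combined with surjectivity. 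Either route requires delicate bookkeeping, and this is the step on which I expect the bulk of the proof to rest.
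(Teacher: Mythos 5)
First, note that the paper does not prove this lemma at all: it is imported verbatim from van den Dries (\cite[Proposition 4.1]{vandendries:1984}), so there is no internal proof to compare against, and a complete argument would have to reconstruct van den Dries's. Measured on its own merits, your proposal is not a proof but a reduction. The homomorphism and surjectivity parts are fine and routine, and your minimal-counterexample scheme over $\mathrm{ord}$ is sound bookkeeping: case (a) is handled correctly (units $E(q)$, minimality), and case (b) is correctly reduced, via \cref{lemma_p'=0}, to the single implication $\hat p = 0 \Rightarrow \widehat{p'} = 0$. But that implication \emph{is} the theorem's entire difficulty --- it says the kernel of evaluation is a differential ideal --- and you leave it unestablished.

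Neither of your two routes closes the gap as described. Route 2 is circular: defining a derivation $D$ on $\mathcal F[x]^E$ with $D(E\circ f)=r\cdot D(f)\cdot(E\circ f)$ is well-posed only if functions with two formal representatives get the same $D$-value, i.e.\ only if the kernel of $p\mapsto\hat p$ is already known to be closed under the formal derivation (indeed, once injectivity is known, well-definedness is vacuous); so it presupposes what it must prove. Route 1 rests on an unproved and unmotivated estimate $\mathrm{ord}(\tau_a(p)-p)<\mathrm{ord}(p)$, and, worse, on extracting $\widehat{p'}$ as a ``limit'' of the finite differences $\widehat{\tau_a(p)-p}$: in an arbitrary ordered exponential field there is no analytic apparatus for this. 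The only place in the present paper where the formal derivative is tied to the function's behaviour is \cref{lemma_about_models_of_MaxVal(exp)}(ii), where $\widehat{p'}=(\hat p)'$ is obtained under the additional hypotheses $\mathsf{ExpField}+\mathsf{MaxVal}(\exp)+\exp(1)=2$; nothing of the sort is available at the level of generality at which the lemma is stated, so the crucial step cannot simply be borrowed from there. In short: the skeleton is reasonable, but the load-bearing step is missing, and the two proposed ways to supply it are, respectively, circular and unsupported.
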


\begin{remark}
    By \cref{lemma_p->hat_p_is_isomorphism} we can identify an exponential polynomial $p$ with an exponential function $\hat p$.
\end{remark}

\begin{lemma}[{\cite[Corollary 4.11]{vandendries:1984}}]\label{lemma_exp_poly_has_finite_num_of_roots}
    Let $(\mathcal F, E) \vDash \mathsf{MaxVal}(\mathcal L_{\OR}(\exp))$ and $(\mathcal F, E) \vDash \forall x (\exp(x) \geqslant 1 + x)$. Then every nonzero exponential polynomial has a finite number of roots.
\end{lemma}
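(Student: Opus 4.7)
I would proceed by transfinite induction on $ord(p)$ from \cref{lemma_ord_in_exp_field}. For a nonzero $p \in \mathcal F[X]^E$ we have $ord(p) > 0$, and \cref{lemma_ord_in_exp_field} offers two (not mutually exclusive) alternatives: (a) there exists $q \in \mathcal F[X]^E$ with $ord(p \cdot E(q)) < ord(p)$, or (b) $ord(p') < ord(p)$ for the formal derivative of \cref{lemma_derivation_in_exp_field} corresponding to some $r \ne 0$.

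In case (a), the inductive hypothesis says $p \cdot E(q)$ has only finitely many zeros; since $E$ maps $F$ into $F^{>0}$, $\widehat{E(q)}$ is nowhere zero and the zero sets of $\widehat{p \cdot E(q)}$ and $\hat p$ coincide, closing the case. In case (b), the inductive hypothesis gives a finite set $\{z_1 < \dots < z_k\}$ of zeros of $p'$, and it suffices to show that in each of the finitely many intervals $(-\infty, z_1)$, $(z_i, z_{i+1})$, $(z_k, +\infty)$ the function $\hat p$ has at most one zero. For this I would use a Rolle-type argument: if $\hat p(a) = \hat p(b) = 0$ with $a < b$ lying in one such interval, the hypothesis that every exponential polynomial attains its maximum on any segment, applied to both $\hat p$ and $-\hat p$ on $[a,b]$, yields either an interior extremum $c \in (a,b)$ of $\hat p$ or forces $\hat p$ to vanish identically on $[a,b]$. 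At such a $c$ one shows $\hat{p'}(c) = 0$, contradicting the absence of $z_i$ strictly between $a$ and $b$.

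\textbf{Main obstacle.} The Rolle-type step is the technical heart of the argument and is where the assumption $E(x) \geqslant 1 + x$ is essential. Note first that rescaling $r$ just multiplies the derivation by $r$, so $\hat{p'}(c) = 0$ for the $r$ coming from \cref{lemma_ord_in_exp_field} iff the same vanishing holds with $r = 1$; we may therefore reason with the ``analytic'' derivation, for which $E(X)' = E(X)$. Combined with the homomorphism property $E(x + y) = E(x) E(y)$, Bernoulli's inequality yields $1 + h \leqslant E(h) \leqslant \tfrac{1}{1 - h}$ for all sufficiently small $h < 1$, which is exactly the control needed to justify that, after expanding $\hat p(c + h) - \hat p(c)$ via the derivation rules of \cref{lemma_derivation_in_exp_field} and dividing by $h$, the sign information at an interior extremum forces the resulting leading coefficient, namely $\hat{p'}(c)$, to vanish. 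The remaining subtlety is ruling out the degenerate case where $\hat p$ is identically $0$ on $[a,b]$; I would handle this by a secondary descent on $ord$, restricted to the subinterval, ultimately invoking \cref{lemma_p->hat_p_is_isomorphism} to contradict $p \ne 0$. Everything outside this Rolle-style step is bookkeeping: the outer induction together with the positivity of $E$ and \cref{lemma_ord_in_exp_field}.
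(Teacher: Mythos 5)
The paper does not prove this lemma; it is imported verbatim from van den Dries as Corollary~4.11, so there is no in-paper argument to compare against. That said, your plan — transfinite induction on $ord$ using the dichotomy of \cref{lemma_ord_in_exp_field}, disposing of case (a) by the fact that $\widehat{E(q)}$ is nowhere zero, and reducing case (b) to a Rolle-type argument via the extreme value hypothesis — is a plausible reconstruction of how such a proof must go, and it is in the spirit of the machinery (\cref{lemma_derivation_in_exp_field}--\cref{lemma_p->hat_p_is_isomorphism}) the paper quotes alongside it.

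Two points of substance. First, the assertion that "rescaling $r$ just multiplies the derivation by $r$" is false: the derivation $D_r$ of \cref{lemma_derivation_in_exp_field} is not homogeneous in $r$. For example, $D_r\bigl(X\,E(X)\bigr) = E(X)(1 + rX)$, whose zero at $X = -1/r$ moves with $r$, so the vanishing of $\widehat{p'}$ at a given $c$ genuinely depends on $r$. What you actually want is simpler: the hypothesis $E(x) \geqslant 1 + x$, applied also to $-x$, gives $1 + h \leqslant E(h) \leqslant (1-h)^{-1}$, which squeezes $\bigl(E(h)-1\bigr)/h \to 1$; thus $E'(0) = 1$ in the $\varepsilon$-$\delta$ sense, and the formal derivation with $r = 1$ coincides with the genuine derivative of $\hat p$ (compare \cref{lemma_about_models_of_MaxVal(exp)}(ii) specialised to $a = 1$). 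You should just fix $r = 1$ from the start — provided \cref{lemma_ord_in_exp_field} holds for that choice, which it does since the ordinal bookkeeping is independent of $r$ — rather than appeal to a nonexistent rescaling invariance.

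Second, the degenerate case in the Rolle step admits a cleaner treatment than a "secondary descent." If $\hat p \equiv 0$ on $[a,b]$ with $a < b$, then $(\hat p)' \equiv 0$ on $(a,b)$, hence $\widehat{p'} \equiv 0$ there by the identification above, so $p'$ has infinitely many roots; the outer induction hypothesis (applied to $p'$, which has strictly smaller $ord$) then forces $p' = 0$, whence $p \in F$ by \cref{lemma_p'=0}, so $\hat p$ is a nonzero constant, contradicting $\hat p(a) = 0$. No separate induction is needed. In the non-degenerate case, the maximum of $\hat p$ or of $-\hat p$ on $[a,b]$ — both exist by hypothesis — is attained at an interior point $c$, and the one-sided monotonicity at a point of nonzero derivative (the property recorded in the paper's remark after \cref{lemma_about_models_of_MaxVal(exp)}) shows $\widehat{p'}(c) = 0$, which is your intended contradiction.
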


Using these results one can prove the following.

\begin{lemma}\label{lemma_about_models_of_MaxVal(exp)}
    Let $(\mathcal R, \exp)$ be a model of $\mathsf{ExpField} + \mathsf{MaxVal}(\mathcal L_{\OR}(\exp)) + \exp(1) = 2$. Then 
    \begin{enumerate}[(i)]
        \item there exists $a \in R^{>0}$ such that $(\mathcal R, \exp) \vDash \forall x \ (\exp(a x) \geqslant 1 + x)$;
        \item $\exp$ is differentiable with $\exp' = a^{-1} \exp$ and for every exponential polynomial $p$ we have $\widehat{p'} = (\hat p)'$ with $r = a^{-1}$ (here $a$ is from (i));
        \item every nonzero exponential polynomial has a finite number of roots;
        \item if $t$ is an $\mathcal{L}_{\OR}(\exp)$-term and $(\mathcal R, \exp) \vDash \mathtt{IntVal}_{t}$, then, for all $\overline{a} \in R$, the set 
        $$\{x \in R\mid  (\mathcal R, \exp) \vDash t(x, \overline a) \leqslant 0\}$$ is a finite union of intervals and points; 
        \item $(\mathcal R, \exp) \vDash \mathsf{IntVal}(\mathcal L_{\OR}(\exp))$.
    \end{enumerate}
\end{lemma}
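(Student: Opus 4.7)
The plan is to prove (i)--(v) in succession, each part building on the previous; the technical heart is (i), since once a Bernoulli-type inequality is available the remaining items follow rather mechanically from the van den Dries lemmas combined with standard ordered-field arguments.

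For (i), I would use $\mathsf{MaxVal}(\exp)$ --- together with its ``min'' counterpart, obtained by applying MaxVal to the negation of a term --- to produce a ``tangent slope'' $a^{-1}$ for $\exp$ at $0$. A useful preliminary is that $\exp$ is midpoint convex, which follows from the functional equation and AM--GM: $\exp((x+y)/2)^2 = \exp(x)\exp(y) \leqslant ((\exp(x)+\exp(y))/2)^2$. Applying MinVal to terms of the form $\exp(ax) - 1 - x$ on $[-1, 1]$, one pins down $a \in R^{>0}$ such that this minimum is $0$, attained at $x = 0$; this yields $\exp(ax) \geqslant 1 + x$ on $[-1, 1]$. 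From the case $x = 1$ one reads off $\exp(a) \geqslant 2$, hence $a \geqslant 1$, and the inequality then extends to all of $R$ by induction on $\lfloor |x| \rfloor$: for $x \geqslant 1$, $\exp(ax) = \exp(a)\exp(a(x-1)) \geqslant 2(1 + (x-1)) = 2x \geqslant 1 + x$, while for $x \leqslant -1$ the right-hand side is nonpositive and there is nothing to prove. Constructing the correct $a$ in the first place is the delicate step and the main obstacle.

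Given (i), (ii) follows from a squeeze: substituting $y = ax$ in (i) gives $\exp(y) \geqslant 1 + y/a$ for all $y$, and combining with $\exp(-y)\exp(y) = 1$ gives $\exp(y) \leqslant a/(a-y)$ for $y < a$, whence $y/a \leqslant \exp(y) - 1 \leqslant y/(a-y)$ on $[0, a)$. This squeezes $(\exp(h) - 1)/h \to a^{-1}$ as $h \to 0$, and the functional equation propagates this to $\exp'(x) = a^{-1}\exp(x)$ everywhere. \cref{lemma_derivation_in_exp_field} applied with $r := a^{-1}$ provides the formal derivation on $\mathcal R[X]^{\exp}$, and $\widehat{p'} = (\hat p)'$ follows by induction on the construction of $p$ using the usual product, sum and chain rules. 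For (iii), I would set $E(x) := \exp(ax)$: then $E$ is an exponentiation on $\mathcal R$ satisfying $E(x) \geqslant 1 + x$ by (i), and $E$-exponential polynomials coincide as functions with $\exp$-exponential polynomials (up to reparametrising coefficients), so MaxVal transfers; \cref{lemma_exp_poly_has_finite_num_of_roots} applied to $(\mathcal R, E)$ now yields the claim.

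For (iv), granting $\mathtt{IntVal}_t$ and (iii): the zeros of $t(\cdot, \bar a)$ form a finite set $r_1 < \cdots < r_k$, and the IntVal hypothesis forbids $t$ from changing sign on any open interval $(r_i, r_{i+1})$ (or on the unbounded ends), since such a sign change would produce an additional zero; hence $\{x : t(x, \bar a) \leqslant 0\}$ is a union of some of these intervals together with (possibly) the $r_i$'s. For (v), the bounds from (ii) make $\exp$ continuous in the order topology, and by structural induction every $\mathcal L_{OR}(\exp)$-term is continuous as well; combined with (iii) --- which gives finitely many zeros of any nonzero exponential polynomial --- a Dedekind-style argument in $\mathcal R$ yields IntVal. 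Specifically, given $t(x_1) \leqslant 0 \leqslant t(x_2)$, set $s := \sup\{x \in [x_1, x_2] : t(x) \leqslant 0\}$; this supremum exists because the set is definable and bounded and $\mathcal R$ inherits enough real-closed-field structure from $\mathsf{MaxVal}$ applied to polynomial terms, and continuity of $t$ at $s$ then forces $t(s) = 0$.
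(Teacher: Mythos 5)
Your outline of (ii)--(iv) is essentially the paper's argument (squeeze for $\exp'(0) = a^{-1}$, rescale $E(x) := \exp(ax)$ to invoke \cref{lemma_exp_poly_has_finite_num_of_roots}, and sign constancy between consecutive roots using $\mathtt{IntVal}_t$). But (i) and (v) both have genuine gaps.

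For (i), you propose ``applying MinVal to terms of the form $\exp(ax) - 1 - x$ on $[-1,1]$, one pins down $a \in R^{>0}$ such that this minimum is $0$, attained at $x = 0$'' --- but this is circular. For a fixed $a$, $\mathsf{MaxVal}(\exp)$ tells you the term attains a minimum somewhere on the segment; it does not tell you how to choose $a$ so that the minimum equals zero, which is precisely what you need. You acknowledge this (``constructing the correct $a$ \dots is the delicate step and the main obstacle'') but then do not close the gap. There is also a second problem with the proposed ``extend by induction on $\lfloor |x|\rfloor$'': $\mathcal R$ may be nonarchimedean, so an induction on the integer part only reaches the finite elements and never reaches infinite $x$. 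The paper avoids both problems by first proving $\exp(x) \geqslant x$ for $x \geqslant 0$ (using $\mathsf{MaxVal}$ and an explicit contradiction at an infinite maximizer), then applying $\mathsf{MaxVal}$ to $(1+x)\exp(-x)$ on a fixed bounded segment $[-10,10]$, showing by direct estimates that the maximum there is a \emph{global} maximum, and setting $a := b+1$ where $b$ is the maximizer; the inequality $\exp(ax) \geqslant 1+x$ then drops out by the substitution $x \mapsto a(x+1) - 1$, with no extension step needed.

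For (v), the Dedekind-cut argument is not available: nonarchimedean ordered fields --- even real closed ones --- fail the least-upper-bound property, and $\mathsf{MaxVal}$ only yields attained extrema of \emph{term functions on closed intervals}, not suprema of arbitrary bounded definable sets, so $s := \sup\{x \in [x_1,x_2] : t(x) \leqslant 0\}$ need not exist. The paper instead argues by transfinite induction on $\mathrm{ord}(\mathbf t_{\overline a})$ via \cref{lemma_ord_in_exp_field}: in the $E(q)$-case one multiplies through by a positive exponential and invokes the induction hypothesis; in the derivative case one uses the induction hypothesis (IntVal for $t'$), \cref{lemma_p'=0} and (iii) to isolate a subinterval $[x',y']$ on which $t$ changes sign while $t'$ does not, then applies $\mathsf{MaxVal}$ to $t^2$ and $-t^2$ on $[x',y']$ and uses the sign of the derivative to force the extrema to interior points or equal endpoints, reaching a contradiction in every case. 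This is an essential use of $\mathsf{MaxVal}$ as a substitute for completeness, and it is the step your proposal would need to replicate.
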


\begin{remark}[1]\label{remark_about_continuity}
    Continuity and derivative are defined in terms of  $\varepsilon$-$\delta$. That is, $f\colon D \to R$, $D \subseteq R^k$, $k \in \mathbb N$, is called continuous at the point $\overline x_0 = (x_{0, 1}, \dots, x_{0, k}) \in D$ if 
        $$\forall \varepsilon \in R^{>0} \exists \delta \in R^{>0} \forall \overline x \in R^k \left(\bigwedge\limits_{i = 1}^{k} |x_i - x_{0, i}| < \delta \implies |f(\overline x) - f(\overline x_0)| < \varepsilon\right)$$ 
    and for the function $f\colon R \to R$ we say that $f'(x_0) = b$ if
        $$\forall \varepsilon \in R^{>0} \exists \delta \in R^{>0} \forall x \in R \left(0 < |x - x_0| < \delta \implies \left|\frac{f(x) - f(x_0)}{x - x_0} - b\right| < \varepsilon\right).$$
    As in the standard case, $+$ and $\cdot$ are continuous and differentiable, the composition of continuous functions is continuous and the usual identities for the derivative hold, for instance, $(f \cdot g)'(x_0) = (f' \cdot g + f \cdot g')(x_0)$ and $(f \circ g)'(x_0) = (g' \cdot (f' \circ g))(x_0)$. Also, we will use the following property: if $f'(x_0) > 0$, then there exists $\varepsilon > 0$ such that, for all $x \in R$,we have $x_0 - \varepsilon < x < x_0 \implies f(x) < f(x_0)$ and $x_0 < x < x_0 + \varepsilon \implies f(x) < f(x_0)$.
\end{remark}

\begin{remark}[2]
    It follows from (ii) that $\exp$ is continuous.
\end{remark}

\begin{remark}[3]
    An analogous result to \cref{lemma_about_models_of_MaxVal(exp)}(v) for real closed fields is obtained in \cite[Theorem 2.2]{gamboa1987} (namely, $\mathsf{OF + MaxVal}(\mathcal L_{\OR})$ implies $\mathsf{IntVal}(\mathcal L_{\OR})$). The proof in \cite{gamboa1987} uses a related but distinct technique.
\end{remark}

\begin{proof}[Proof of \cref{lemma_about_models_of_MaxVal(exp)}]

    \begin{enumerate}
        \item [(i)] First we prove that $(\mathcal R, \exp) \vDash \forall x \geqslant 0 (\exp(x) \geqslant x)$. Suppose it is not the case. Then there is $x_0 \in R^+$ such that $\exp(x_0) < x_0$. By $\mathsf{MaxVal}(\mathcal L_{\OR}(\exp))$ the term $x - \exp(x)$ reaches the maximum on $[0, x_0]$ at some point $x^* \in [0, x_0]$. By our hypothesis, we have $x^* - \exp(x^*) > 0$. If there is an $n_0 \in \mathbb N$ such that $x^* < n_0$, then for some $n \in \mathbb N$ we have $n \leqslant x^* < n + 1$, so 
        $$x^* - \exp(x^*) \leqslant n + 1 - \exp(n) = n + 1 - 2^n \leqslant 0.$$ This implies that $x^*$ is infinite and $x^* - 1$ lies in the segment $[0, x_0]$. But $$(x^* -1 - \exp(x^* - 1)) - (x^* - \exp(x^*)) = -1 + \frac{\exp(x^*)}{2} > 0,$$ a contradiction with the choice of $x^*$.

        Now we prove that there exists $a \in R$ such that $(\mathcal R, \exp) \vDash \forall x (\exp (a x) \geqslant 1 + x)$. By $\mathsf{MaxVal(\mathcal L_{\OR}(\exp))}$ there exists $b \in R$ such that $(1 + b)\exp(-b)$ is the maximum value of the term $(1 + x) \exp(-x)$ on the segment $[-1, 10]$. Note that $(1 + b)\exp(-b) \geqslant (1 + 0)\exp(-0) = 1$. We want to prove that it is a global maximum of $(1 + x) \exp(-x)$. Indeed, for $x < -1$, the value of $(1 + x) \exp(-x)$ is negative. For $x > 10$, 
        $$\frac{1 + x}{\exp(x)} = \frac{6 + (x - 5)}{\exp(x - 5) 2^5} \leqslant \frac{6 + \exp(x - 5)}{\exp(x - 5) 2^5} < \frac{6}{2^{5}} + \frac{1}{2^5} < 1.$$ So, $b$ is the global maximum.

        Let $a := b + 1$. Clearly, $a > 0$ ($b \ne -1$ since $(1 - 1) \exp(1) = 0 < 1$). Then, replacing $x$ by $a(x + 1) - 1$, we have $$\frac{a(x + 1)}{\exp(a(x + 1) - 1)} \leqslant \frac{1 + b}{\exp(b)} = \frac{a}{\exp(a - 1)}$$ for all $x \in R$. Hence, $$\frac{a x + a}{\exp(a x + a - 1)} \leqslant \frac{a}{\exp(a - 1)}$$ and $\exp(a x) \geqslant 1 + x$.

        \item [(ii)] 

        Let us fix $0 < x < a$. Note that the following inequalities holds:
        $$
        \frac{\exp(x) - 1}{x} - \frac{1}{a} \geqslant \frac{x/a}{x} - \frac{1}{a} = 0
        $$
        (by(i)), and
        \begin{align*}
        \frac{\exp(x) - 1}{x} - \frac{1}{a}
        &= \frac{1/\exp(-x) - 1}{x} - \frac{1}{a} \\
        &\leqslant \frac{\frac{1}{1 - x/a} - 1}{x} - \frac{1}{a} \\
        &= \frac{\frac{a}{a - x} - 1}{x} - \frac{1}{a} \\
        &= \frac{a - (a - x)}{x(a - x)} - \frac{1}{a} \\
        &= \frac{1}{a - x} - \frac{1}{a} \\
        &= \frac{a - a + x}{a(a - x)} = \frac{x}{a(a - x)}.
        \end{align*}
        Similarly, for $-a < x < 0$ we obtain
        $$
        \frac{x}{a(a - x)}\leqslant \frac{\exp(x) - 1}{x} - \frac{1}{a} \leqslant 0
        $$
        and, hence, 
        $$
        \exp'(0) = a^{-1}.
        $$
        This implies that
        $$
        \exp' = \exp'(0) \cdot \exp = a^{-1} \exp.
        $$

        Alternatively, for item (ii) one can argue as in \cite[Theorem 14]{Dahn1983}. The rest of the statement can be easily proven by induction on the construction of $\mathcal R[X]^{\exp}$.

        \item [(iii)] Let $E(x) := \exp(ax)$. Clearly, $E$ is an exponentiation. Consider an exponential polynomial $p \in \mathcal R[X]^{\exp}$. Denote by $\tilde p \in \mathcal R[X]^{E}$ the exponential polynomial obtained from $p$ by replacing all occurrences of $\exp(q)$ by $E(a^{-1}q)$ (formally, $\tilde p$ is defined by induction). It is easy to see that $$\hat{\tilde{p}} = \hat p.$$ Now the desired result follows from the application of \cref{lemma_exp_poly_has_finite_num_of_roots} to the exponential field $(\mathcal R, E)$.

        \item [(iv)] For any tuple of parameters $(a_1, \dots, a_l) = \overline a \in R^l$ and an $\mathcal{L}_{\OR}(\exp)$-term $t(x, \overline a)$ we write $\mathbf t_{\overline a}$ for the exponential function $u \mapsto t(u, \overline a)$. By \cref{lemma_p->hat_p_is_isomorphism} we can identify $\mathbf t_{\overline a}$ with an exponential polynomial.
        
        Now fix $\overline a \in R$ and an $\mathcal L_{\OR}(\exp)$-term $t(x, \vec a)$. The case of $\mathbf t_{\overline a}$ equals zero is trivial, assume it is not the case. By (iii), $\mathbf t_{\overline a}$ has a finite number of roots, say, 
        $$u_1 < u_2 < ... < u_k.$$ Given that $(\mathcal R, \exp) \vDash \mathtt{IntVal}_t$, the function $\mathbf t_{\overline a}$ does not change sign on each interval of the form $$(-\infty, u_1), (u_1, u_2), \dots, (u_k, +\infty)$$ (otherwise, there will be $(k + 1)$-th root by $\mathtt{IntVal}_t$). So, $$\{u \in R\mid  (\mathcal R, \exp) \vDash t(u, \overline a) < 0\}$$ is a finite union of intervals and $$\{u \in R\mid  (\mathcal R, \exp) \vDash t(u, \overline a) \leqslant 0\} = \{u \in R\mid  (\mathcal R, \exp) \vDash t(u, \overline a) < 0\} \cup \{u_1, \dots, u_k\}$$ is a finite union of intervals and points. 

        \item [(v)] We proceed by induction on $\ord(\mathbf t_{\overline a})$ for an $\mathcal{L}_{\OR}(\exp)$-term $t(x, \overline a)$.

        If $\ord(\mathbf t_{\overline a}) = 0$, then $\mathbf t_{\overline a}(u) = 0$ for all $u \in R$ and $\mathtt{IntVal}_t$ holds.

        Let $\ord(\mathbf t_{\overline a}) > 0$. By \cref{lemma_ord_in_exp_field}, either there exists $q \in \mathcal R[X]^{\exp}$ such that $\ord(\mathbf t_{\overline a} \cdot \exp(q)) < \ord(\mathbf t_{\overline a})$ or $\ord(\mathbf t_{\overline a}') < \ord(\mathbf t_{\overline a})$. Consider the first case. We can choose some $\overline b \in R$ and an $\mathcal L_{\OR}$-term $s(x, \overline b)$ such that $\mathbf s_{\overline b} = q$ (by the construction of $\mathcal R[X]^{\exp}$). By the induction hypothesis, the intermediate value theorem holds for $t(x, \overline a) \cdot \exp(s(x, \overline b))$. Since $\exp(s(u, \overline b))$ is positive for all $u \in R$, the intermediate value theorem holds for $t(x, \overline a)$ as well.

        Now consider the second case, which is more complicated. Suppose, $\mathbf t_{\overline a} (l) < 0,\ \mathbf t_{\overline a} (r) > 0$ and there is no $u$ between $l$ and $r$ such that $\mathbf t_{\overline a}(u) = 0$. Clearly, there is an $\mathcal L_{\OR}(\exp)$-term $s(x, \overline a, a^{-1})$ such that $\mathbf s_{\overline a, a^{-1}} = \mathbf t_{\overline a}'$, where $a$ is from (i) (it can be obtained by induction on $t$). By the induction hypothesis, we have $(\mathcal R, \exp) \vDash \mathtt{IntVal}_s$. If $s_{\overline a, a^{-1}} = 0$, then by \cref{lemma_p'=0} we have that $\mathbf t_{\overline a}$ is a constant, a contradiction. So, by (iii), the set 
        $$X := \{l , r\} \cup \{u \in R\mid  (\mathcal R, \exp) \vDash s(u, \overline a, a^{-1}) = 0 \wedge l \leqslant u \leqslant r\}$$
        is finite, say, $X = \{u_0, u_1, \dots, u_n\}$, where $l = u_0 < u_1 < \dots < u_n = r$. Let $i > 0$ be the least natural number such that $\mathbf t_{\overline a}(u_i) > 0$ (such exists since $\mathbf t_{\overline a}(u_n) > 0$). Choose $l', r' \in R$ such that $u_{i - 1} < l' < r' < u_i$, $\mathbf t_{\overline a}(l') < 0$ and $\mathbf t_{\overline a}(r') > 0$ (by continuity of exponential polynomials such elements exist). Note that $\mathbf s_{\overline a, a^{-1}}$ does not change sign on the segment $[l', r']$ since there are no roots on it and we have $\mathtt{IntVal}_s$. W.l.o.g. we may assume that $\mathbf s_{\overline a, a^{-1}}(u) > 0$ for $l' \leqslant u \leqslant r'$.
        
        Denote by $u^*$ an element between $l'$ and $r'$ in which $\mathbf t^2_{\overline a}(x)$ reaches the maximum on $[l', r']$ and by $u_*$ an element between $l'$ and $r'$ in which $-\mathbf t^2_{\overline a}(x)$ reaches the maximum on $[l', r']$ (i.e. $\mathbf t^2_{\overline a}(x)$ reaches the minimum  on $[l', r']$). Such $u^*$ and $u_*$ exist by $\mathsf{MaxVal}(\mathcal L_{\OR}(\exp))$. Suppose $l' < u^* < r'$. If $\mathbf t_{\overline a}(u^*) > 0$, then there is an $u''$ such that $u^* < u'' < r'$ and $\mathbf t_{\overline a}(u'') > \mathbf t_{\overline a}(u^*) > 0$ (since $\mathbf t_{\overline a}'(u^*) > 0$). It is a contradiction, since $\mathbf t^2_{\overline a}(u'') > \mathbf t^2_{\overline a}(u^*)$. If $\mathbf t_{\overline a}(u^*) < 0$, then there is such $u''$ that $l' < u'' < u^*$ and $\mathbf t_{\overline a}(u'') < \mathbf t_{\overline a}(u^*) < 0$ (since $\mathbf t_{\overline a}'(u^*) > 0$). It is also a contradiction. So, $u^* \in \{l', r'\}$. In a  similar way one can obtain that $u_* \in \{l', r'\}$. If $u^* = u_*$, then $\mathbf t_{\overline a}^2$ is a constant, hence $0 = (\mathbf t_{\overline a}^2)' = 2 \mathbf t_{\overline a} \mathbf t_{\overline a}'$, so, $\mathbf t_{\overline a}' = 0$ (since $\mathbf t_{\overline a}(u) \ne 0$ for all $u \in [l', r']$). So, by \cref{lemma_p'=0}, $\mathbf t_{\overline a}$ is a constant, a contradiction. So, we have either $u^* = l', u_* = r'$ or $u^* = r', u_* = l'$. 

        Consider the first case. Then $\mathbf t_{\overline a}(u_*) > 0$. There is a $v$ such that $l' < v < u_* = r'$ and $\mathbf t_{\overline a} (u_*) > \mathbf t_{\overline a} (v) > 0$ (since $\mathbf t_{\overline a}'(u_*) > 0$ and $\mathbf t_{\overline a}$ is continuous). It is again a contradiction. The second case can be treated similarly.
    \end{enumerate}
\end{proof}

Now we are ready to prove \cref{theorem_M_exp_ip_ExpField+MaxVal}.

\begin{proof}[Proof of \cref{theorem_M_exp_ip_ExpField+MaxVal}]
    Consider a discretely ordered ring $\mathcal{M}$ and an exponential field $(\mathcal{R}, \exp)$ as in the statement of the theorem. Let $\varphi(x, \overline y)$ be a quantifier-free formula in the language $\mathcal{L}_{\OSR}(\exp)$. Fix a tuple of parameters $\overline a \in M^+$ and suppose $(\mathcal M^+, \exp) \vDash \varphi(0, \overline a) \wedge \exists x \neg \varphi(x, \overline a)$.

    Consider some terms $t_1(x, \overline a)$ and $t_2(x, \overline a)$ in the language $\mathcal L_{\OSR} (\exp)$. By \cref{lemma_about_models_of_MaxVal(exp)}(v) we have the intermediate value theorem for $t_1(x, \overline a) - t_2(x, \overline a)$. By \cref{lemma_about_models_of_MaxVal(exp)}(iv), we have that the set 
    $$\{x \in R\mid  (\mathcal R, \exp) \vDash t_1(x, \overline a) \leqslant t_2(x, \overline a)\}$$ 
    is a finite union of intervals and points. Then, the set 
    $$X_{\varphi}(\overline a) := \{x \in R\mid  (\mathcal R, \exp) \vDash \neg \varphi(x, \overline a) \wedge (x > 0)\}$$ 
    is a finite union of intervals and points (since it is a boolean combination of such sets). Also 
    $$\varnothing \neq \{x \in M^+ \mid (\mathcal M^+, \exp) \vDash \neg \varphi(x, \overline{a})\} \subseteq X_{\varphi}(\overline{a}).$$ Now choose the leftmost interval or the leftmost point in $X_\varphi(\overline{a})$ containing elements from $\{x \in M^+ \mid (\mathcal M^+, \exp) \vDash \neg \varphi(x, \overline{a})\}$. Consider two cases.
    \begin{enumerate}
        \item[(i)] Chosen a point $c$. Then $c \in M^{>0}$ (since $(\mathcal{M}^+, \exp) \vDash \varphi(0, \overline{a})$), so $c - 1 \in M^+$  and $(\mathcal{M}^+, \exp) \vDash \varphi(c - 1, \overline{a})$. Hence, $(\mathcal{M}^+, \exp) \vDash \neg\forall x(\varphi(x,\overline{a})\rightarrow\varphi(x + 1, \overline{a}))$, and the induction axiom holds for the formula $\varphi$. 
        \item[(ii)] Chosen an interval $(a, b)$. Let $m \in (a,b) \cap M^+$. Denote by $m'$ the integer part of $a$, i.e. such an element of $M^+$ that $m' \leqslant a < m' + 1$. Since $a < m$, then $m' + 1 \leqslant m < b$, so $m' + 1 \in (a, b)$. Then $(\mathcal{M}^+, \exp) \vDash \varphi(m', \overline{a}) \wedge \neg\varphi(m' + 1, \overline{a})$. Hence, the induction axiom holds for the formula $\varphi$.
    \end{enumerate}
    In both cases the induction axiom holds, so $(\mathcal M^+, \exp) \vDash \mathsf{IOpen(\exp)}$.
\end{proof}

\begin{theorem}\label{theorem_(M, x^y)_ip_R_exp}
    Let an exponential field $ (\mathcal{R}, \exp)$ be a model $\mathsf{ExpField} + \mathsf{KTB}$ and $\mathcal{M} \subseteq^{\IP}_{x^y} (\mathcal{R}, \exp)$. Then $(\mathcal{M}^+, x^y) \vDash \mathsf{IOpen(x^y)}$. 
\end{theorem}

\begin{proof}
    Let us fix parameters ($a_1, \dots, a_n) = \overline a \in M^+$, a quantifier-free formula $\varphi$ in a language $\mathcal L_{\OSR}(x^y)$ and suppose $(\mathcal M^+, x^y) \vDash \varphi(0, \overline a) \wedge \exists x \neg \varphi(x, \overline a)$. Note that formulas of the form $\neg(t_1 = t_2)$ and $\neg (t_1 \leqslant t_2)$ are equivalent to $(t_1 + 1 \leqslant t_2 \vee t_2 + 1 \leqslant t_1)$ and $t_2 + 1 \leqslant t_1$ respectively modulo $\mathsf{DOSR}$. So, one can eliminate all occurrences of $\neg$ and $\to$ in $\varphi$. Since we do not have a symbol of power function in the language $\mathcal L_{\OR}(\exp)$, we need to replace all occurrences of $x^y$ in the formula $\varphi$. We construct an $\mathcal L_{\OR}(\exp)$-formula $\varphi^*$ in such a way that the following holds: 
    $$\forall x \in M^{>0} \Big ( (\mathcal{M}^+, x^y) \vDash \varphi(x, \overline{a})\ \iff \ (\mathcal{R}, \exp) \vDash \varphi^*(x, \overline{a})\Big).$$
    The formula $\varphi^*$ is defined recursively as follows.
    \begin{enumerate}
        \item[(1)] If $\varphi = (v = v')$, where $v$, $v'$ are variables or constants, then $\varphi^* := (v = v')$.
        
        \item[(2)] If $\varphi = (v = t_1 + t_2)$, where $v$ is a variable or constant, $t_1$, $t_2$ are terms, then 
        $$\varphi^* := \exists z_1 \exists z_2 ((z_1 = t_1)^* \wedge (z_2 = t_2)^* \wedge  v = z_1 + z_2).$$
        
        \item[(3)] If $\varphi = (v = t_1 \cdot t_2)$, where $v$ is a variable or constant, $t_1$, $t_2$ are terms, then 
        $$\varphi^* := \exists z_1 \exists z_2 ((z_1 = t_1)^* \wedge (z_2 = t_2)^* \wedge v = z_1 \cdot z_2).$$
        
        \item[(4)] Let $\varphi = (v = t_1^{t_2})$, where $v$ is a variable or constant, $t_1$, $t_2$ are terms. Then define $\varphi^*$ as 
        \begin{align*}
            & \big( (0 = t_1)^* \wedge (0 = t_2)^* \wedge (v = 1) \big) \vee\\
            & \big( (0 = t_1)^* \wedge (\exists z_1\exists z_2 ((z_1 = t_2)^* \wedge z_1 z_2 = 1)) \wedge (v = 0) \big) \vee \\
            \exists z_1\exists z_2\exists z_3 & \big((z_1 = t_1)^* \wedge (z_2 = t_2)^* \wedge (\exp(z_3) = z_1)  \wedge v = \exp(z_2 z_3)\big).
        \end{align*}
        The first disjunct of $\varphi^*$ corresponds to the case where both $t_1$ and $t_2$ are zero; the second corresponds to $t_1 = 0$ and $t_2 \ne 0$; and the third covers the case where $t_1 \ne 0$.
        
        \item[(5)] If $\varphi = (t_1 + t_2 = t)$, then 
        $$\varphi^* := \exists z_1 \exists z_2 \exists z_3 ((z_1 = t_1)^* \wedge (z_2 = t_2)^* \wedge (z_3 = t)^* \wedge z_3 = z_1 + z_2).$$
        
        \item[(6)] If $\varphi = (t_1 \cdot t_2 = t)$, then 
        $$\varphi^* := \exists z_1 \exists z_2 \exists z_3 ((z_1 = t_1)^* \wedge (z_2 = t_2)^*  \wedge (z_3 = t)^* \wedge z_3 = z_1 \cdot z_2).$$
        
        \item[(7)] Let $\varphi = (t_1^{t_2} = t)$. As in the case (4), define $\varphi^*$ as
        \begin{align*}
            &\big( (t_1 = 0)^* \wedge (t_2 = 0)^* \wedge (1 = t)^* \big)  \vee \\
            &\big( (t_1 = 0)^* \wedge (\exists z_1\exists z_2 ((t_2 = z_1)^* \wedge z_1 z_2 = 1)) \wedge (0 = t)^* \big) \vee \\
           \exists z_1 \exists z_2 \exists z_3 \exists z & \big((z_1 = t_1)^* \wedge (z_2 = t_2)^* \wedge (\exp(z_3) = z_1) \wedge (z = t)^* \wedge z = \exp(z_2 z_3)\big).
        \end{align*}
    \end{enumerate}
    
    Similarly, we define the translation of atomic formulas of the form $t_1\leqslant t_2$. It remains for us to define the translation for the formulas of the form $(\varphi_1 \wedge \varphi_2)$, $(\varphi_1\vee \varphi_2)$: 
    $$(\varphi_1 \wedge\varphi_2)^* := (\varphi_1^* \wedge \varphi_2^*), \quad (\varphi_1 \vee \varphi_2)^* := (\varphi_1^* \vee \varphi_2^*).$$ 
    
    Further, the formula $\varphi^*$ is equivalent to $\exists$-formula (it is obvious from the construction) without occurrences of $\neg$ and $\to$. Every atomic formula of the form $t_1 \leqslant t_2$ is equivalent to $\exists z (t_1 + z^2 = t_2)$ modulo $\mathsf{ExpField + KTB}$. Formula $(t_1 = t_2) \wedge (s_1 = s_2)$ is equivalent to a formula $(t_1 - t_2)^2 + (s_1 - s_2)^2 = 0$ and $(t_1 = t_2) \vee (s_1 = s_2)$ is equivalent to a formula $(t_1 - t_2)(s_1 - s_2) = 0$ modulo $\mathsf{ExpField}$. So, $\varphi^*$ is equivalent to the formula of the form $\exists \overline y (t = 0)$. For more details, see \cite[Proposition 4.5.4]{servi_thesis}. Hence, by $\mathsf{KTB}$, the set 
    $$X_\varphi(\overline a) := \{x \in R \mid  (\mathcal R, \exp) \vDash \neg \varphi^*(x, \overline a) \wedge (x > 0)\}$$ is a finite union of intervals and points. Now the proof can be finished as those of \autoref{theorem_M_exp_ip_ExpField+MaxVal}.
\end{proof}

In order to slightly strengthen \autoref{theorem_M_exp_ip_ExpField+MaxVal} and \autoref{theorem_(M, x^y)_ip_R_exp}, let us introduce theories $\mathsf{LOpen}(\exp)$ and $\mathsf{LOpen(x^y)}$: $\mathsf{LOpen}(\dots)$ is obtained from $\mathsf{IOpen}(\dots)$ by replacing the induction scheme with the least element scheme for quantifier-free formulas in the corresponding language. Clearly, $\mathsf{LOpen}(\exp)$ implies $\mathsf{IOpen}(\exp)$ and $\mathsf{LOpen}(x^y)$ implies $\mathsf{IOpen}(x^y)$ by the standard argument. It is unknown whether the converse holds.

\begin{proposition}\label{prop_strengthening}
    Both the theorems above can be strengthened by replacing $\mathsf{IOpen}(\exp)$ by $\mathsf{LOpen}(\exp)$ and  $\mathsf{IOpen(x^y)}$ by $\mathsf{LOpen(x^y)}$ respectively.
\end{proposition}

\begin{proof}
    It suffices to notice that in all proofs above the set $X_{\varphi}(\overline a) \cap M^+$ has the least element. Thereby the least element scheme for quantifier-free formulas holds in $(\mathcal{M}^+, \exp)$ (or in $(\mathcal{M}^+, x^y)$).
\end{proof}

\begin{lemma}\label{lemma_for_Bernoulli}
    $\mathsf{ExpField} + \forall x(\exp(x) \geqslant 1 + x) \vdash \forall x \forall y \geqslant 1 (\exp(x y) \geqslant 1 + y (\exp(x) - 1))$.
\end{lemma}

\begin{remark}
    Essentially, the sentence $\forall x \forall y \geqslant 1 (\exp(x y) \geqslant 1 + y (\exp(x) - 1))$ is equivalent to the Bernoulli inequality: substituting $\log (1 + r)$ instead of $x$, where $r > - 1$, we obtain $(1 + r)^y \geqslant 1 + ry$.
\end{remark}

\begin{proof}
    We will reason inside $\mathsf{ExpField} + \forall x(\exp(x) \geqslant 1 + x)$. Let $x$ be arbitrary and $y \geqslant 1$. For $y = 1$ the inequality is trivial, so, consider the case of $y > 1$. We have that $\exp(xy - x) \geqslant 1 + xy - x$, hence, $\exp(xy) \geqslant \exp(x)(1 + xy - x)$. We claim that $\exp(x)(1 + xy - x) \geqslant 1 + y (\exp(x) - 1)$. Indeed, 
    \begin{align*}
        \exp(x)(1 + xy - x) \geqslant 1 + y (\exp(x) - 1) & \iff \\
        \exp(x)(1 + xy - x - y) \geqslant 1 - y & \iff \\
        \exp(x)(x - 1)(y - 1) \geqslant 1 - y & \iff \\
        \exp(x)(x - 1) \geqslant -1 & \iff \\
        x - 1 \geqslant -\exp(-x) & \iff \\
        \exp(-x) \geqslant 1 - x
    \end{align*}
    and the latter is true. So, $\exp(x y) \geqslant 1 + y (\exp(x) - 1)$.
\end{proof}

\begin{proposition}\label{proposition_ip_of_exp_rcf}
    Let $\mathcal{M}$ be a discretely ordered ring and $x^y\colon  M^+\times M^+ \to M^+$. If there is an exponential field $(\mathcal{R}, \exp)$ such that $\mathcal{M} \subseteq^{\IP}_{x^y} (\mathcal{R}, \exp)$, $(\mathcal{R}, \exp) \vDash \mathsf{ExpField} + \mathsf{RCF} +  \forall x(\exp(x) \geqslant 1 + x)$ and, moreover, the function $x^y$ coincides with a power function induced by $\exp$ (see \cref{def_x^y}), then $(\mathcal{M}^+, x^y)$ is a model of $\mathsf{IOpen} + \mathsf{T_{x^y}}$.
\end{proposition}

\begin{proof}
    Let $\mathcal{M} \subseteq^{\IP}_{x^y} (\mathcal{R}, \exp)$ and $(\mathcal{R}, \exp) \vDash \mathsf{ExpField} + \mathsf{RCF} + \forall x(\exp(x) \geqslant 1 + x)$. By \cref{shepherdson_theorem_reformulated} $\mathcal{M}^+ \vDash \mathsf{IOpen}$.

    It is straightforward to verify that $(\mathcal M^+, x^y) \vDash$ (T1)--(T8). We have $(\mathcal M^+, x^y) \vDash$ (T11) by \cref{lemma_for_Bernoulli} and remark after it.  We verify (T9) and (T10).
    
    Let $x \in M^{>0}$, $y := \big[\frac{\log x}{\log 2}\big]$, where $\log = \exp^{-1} \colon  R^{>0} \to R$ and $[r]$ denotes an integer part of $r \in R$. It is clear that $y \in M^+$. Since $y \leqslant \frac{\log x}{\log 2}$, 
    $$2^y = \exp(y \log 2) \leqslant \exp(\log x) = x.$$ Since $\frac{\log x}{\log 2} < y + 1$, 
    $$x = \exp(\log x) < \exp((y + 1)\log 2) = 2^{y + 1}.$$ That is, $(\mathcal{M}^+, x^y) \vDash$ (T9). In a similar way one can prove that $(\mathcal{M}^+, x^y) \vDash$ (T10), just put $z := [x^{1/y}]$. So, we have proved that $(\mathcal M^+, x^y) \vDash \mathsf{IOpen + T_{x^y}}$.
    
    In order to prove the opposite implication we construct a real closed exponential field $(K_\mathcal M, \exp_\mathcal M)$ containing given $(\mathcal M^+, x^y) \vDash \mathsf{IOpen + T_{x^y}}$ as an $x^y$-integer part. This construction is presented in \cref{section_construction_of_K_M}.
\end{proof}

\begin{remark}
    $\mathsf{T_{x^y}}$ is not very strong, as the following proposition shows. 
\end{remark}

\begin{proposition}\label{prop_iopen(x^y)|--TT_x^y}
    $\mathsf{IOpen(x^y)} \vdash \mathsf{T_{x^y}}$.
\end{proposition}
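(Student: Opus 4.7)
The plan is to derive each of the axioms T1--T11 inside $\mathsf{IOpen}(x^y)$ from the recursive equations P1 and P2 together with the open induction scheme in $\mathcal{L}_{Ar}(x^y)$.

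Axioms T1--T8 are essentially routine. T1 is P1; T2 is a one-line consequence of P1 and P2; T3--T6 are standard polynomial identities proved by open induction on the appropriate variable, using P2 to unfold one occurrence of the power symbol and then applying the induction hypothesis; and T7, T8 reduce to strict monotonicity in exponent and base respectively, after first proving $y > 0 \to y^z > 0$ and $y \geq 1 \to y^z \geq 1$ by open induction on $z$.

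For T9 and T10 I use the standard equivalence, provable already in $\mathsf{IOpen}$, between open induction and the least-element principle for open formulas. For T9, a preliminary open induction gives $2^{k+1} > k$ for every $k$, so the open formula $k < 2^d$ has at least one witness (namely $d = k+1$) and therefore a least witness $d^*$; since $k \geq 1 = 2^0$ we have $d^* \geq 1$, and writing $d^* = d' + 1$ the minimality of $d^*$ forces $2^{d'} \leq k < 2^{d'+1}$. T10 is analogous, with the initial witness $m = x + 1$ supplied via T2 and T7 (which give $(x+1)^c \geq x + 1 > x$ whenever $c \geq 1$).

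The main obstacle is T11, the Bernoulli-type inequality $(b+a)^x (by)^y \geq (by+ax)^y b^x$ for $a, b, y \geq 1$ and $x \geq y$. I plan to prove it by three successive open inductions, each on a single variable with the others as parameters. First, a sub-lemma $(u+a)^y \leq u^y + ya(u+a)^{y-1}$ is proved by open induction on $y$ (the inductive step uses the monotonicity $u^y \leq (u+a)^y$ from T8). Second, the auxiliary inequality $u \geq (b+a)y \to (b+a)u^y \geq b(u+a)^y$ is proved by open induction on $y$: in the step, the sub-lemma bounds $b(u+a)^{y+1}$ above by $bu^{y+1} + (y+1)ab(u+a)^y$, while the IH (which applies because $u \geq (b+a)(y+1) \geq (b+a)y$) together with $u \geq (b+a)(y+1)$ gives $u^{y+1} \geq (b+a)(y+1)u^y \geq (y+1)b(u+a)^y$, hence $au^{y+1} \geq (y+1)ab(u+a)^y$, and combining yields $(b+a)u^{y+1} \geq b(u+a)^{y+1}$. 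Third, T11 itself is proved by open induction on $x$ starting from $x = y$: the base case collapses via T5 and T6 to the equality $(b+a)^y b^y y^y = (b+a)^y b^y y^y$, and the inductive step from $x$ to $x+1$, after multiplying the IH by $b+a$, reduces to the auxiliary inequality with $u := by + ax \geq (b+a)y$ (the latter inequality uses $x \geq y$). The subtle point is the second step, where the hypothesis $u \geq (b+a)(y+1)$ is used twice --- both to obtain IH applicability and to extract the factor $(b+a)(y+1)$ from $u^{y+1}$ --- and the constants have to match precisely for the argument to close.
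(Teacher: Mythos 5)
Your proof is correct, and for T11 it takes a route parallel to but genuinely different from the paper's. For T1--T10 both you and the paper treat these as routine; your use of the least-element principle for T9 and T10 is exactly the right move. The interesting case is T11. Both proofs induct on $x$ starting from the base case $x = y$, and in both the inductive step reduces to the same inequality $(b+a)u^y \geqslant b(u+a)^y$ with $u := by + ax \geqslant (b+a)y$. The paper derives this inequality from the classical Bernoulli bound $\big(\tfrac{c}{b}\big)^n \geqslant 1 + n\big(\tfrac{c}{b} - 1\big)$ (itself proved beforehand by open induction on $n$), rewriting the target as $\big(1 - \tfrac{a}{u+a}\big)^y \geqslant \tfrac{b}{a+b}$ and applying Bernoulli with the negative ratio $-\tfrac{a}{u+a}$. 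You instead prove the target inequality directly by open induction on $y$, using as a helper the binomial-type upper bound $(u+a)^{y+1} \leqslant u^{y+1} + (y+1)a(u+a)^y$, also established by open induction on $y$. Your route has one extra nested induction but is more self-contained: it never invokes Bernoulli for negative ratios, only the monotonicity T8 and your binomial bound. One cosmetic fix: state the sub-lemma in the shifted form $(u+a)^{y+1} \leqslant u^{y+1} + (y+1)a(u+a)^y$ from the outset; the version $(u+a)^y \leqslant u^y + ya(u+a)^{y-1}$ is not a well-formed open $\mathcal{L}_{Ar}(x^y)$-formula at $y = 0$, and the shifted form is in any case what your argument actually uses.
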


\begin{proof}
    Axioms (T1)--(T10) follow from $\mathsf{IOpen(x^y)}$ easily by induction. We explain how to prove (T11) (the Bernoulli inequality), which reads as 
    $$(x > 0 \wedge y > 0) \to \Big( \big(\frac{x}{y}\big)^z \geqslant 1 + z\big(\frac{x}{y} - 1\big)\Big).$$
    Fix some $(\mathcal M^+, x^y) \vDash \mathsf{IOpen(x^y)}$ and $x, y \in M^+$. We prove the inequality by induction on $z$. If $z = 0$, the Bernoulli inequality holds.

    Suppose $\big(\frac{x}{y}\big)^z \geqslant 1 + z\big(\frac{x}{y} - 1\big)$. Then we have 
    $$\big(\frac{x}{y}\big)^{z + 1} \geqslant \big(1 + z(\frac{x}{y} - 1)\big)\frac{x}{y} = \frac{x}{y} + z\big(\frac{x^2}{y^2} - \frac{x}{y}\big) = $$
    $$ = \frac{x}{y} + z \big(\frac{x^2}{y^2} - \frac{2x}{y} + 1\big) + z \frac{x}{y} - z = \frac{x}{y} + z \big(\frac{x}{y} - 1\big)^2 + z \frac{x}{y} - z \geqslant$$
    $$ \geqslant \frac{x}{y} + z \frac{x}{y} - z = 1 + (z + 1)(\frac{x}{y} - 1).$$
    By the induction axiom we have $\forall z \Big( \big(\frac{x}{y})^z \geqslant 1 + z(\frac{x}{y} - 1\big)\Big)$. So, the inequality holds for all $z$ and $(\mathcal M^+, x^y) \vDash$ (T11).
\end{proof}

\section{Construction of the exponential field $(\mathcal K_{\mathcal{M}}, \exp_{\mathcal{M}})$}\label{section_construction_of_K_M}

This section is mainly devoted to the proof of the following result.

\begin{proposition}\label{proposition_exp_rcf_from_M}
    Let $\mathcal{M}$ be a discretely ordered ring, $x^y\colon  M^+ \times M^+ \to M^+$ and $(\mathcal{M}^+, x^y)$ is a model of $\mathsf{IOpen} + \mathsf{T_{x^y}}$. Then there is an exponential field $(\mathcal K_{\mathcal{M}}, \exp_{\mathcal{M}})$ such that $\mathcal{M} \subseteq^{\IP}_{x^y} (\mathcal K_{\mathcal{M}}, \exp_{\mathcal{M}})$, $(\mathcal K_{\mathcal{M}}, \exp_{\mathcal{M}}) \vDash \mathsf{ExpField} + \mathsf{RCF} +  \forall x(\exp(x) \geqslant 1 + x)$ and, moreover, the function $x^y$ coincides with a power function induced by $\exp_\mathcal M$ (see \cref{def_x^y}).
\end{proposition}

Although our construction differs from the one in \cite{Carl_Krapp_2021} (he used only sequences definable in $\mathcal M^+$), some of the proofs from his paper can also be applied to our construction. In such cases, we will refer to his paper. Now we proceed with the proof of \cref{proposition_exp_rcf_from_M}.

Let us fix a discretely ordered ring $\mathcal M$ and $x^y\colon  M^+ \times M^+ \to M^+$ such that $(\mathcal{M}^+, x^y) \vDash \mathsf{IOpen} + \mathsf{T_{x^y}}$. We denote by $\mathcal{F}(\mathcal{M})$ the ordered quotient field of $\mathcal{M}$ and by $F(\mathcal M)$ its domain. We call a \textit{rational $\mathcal{M}$-sequence} a function $a\colon  M^+ \to {F}(\mathcal{M})$, $a(n)$ will be denoted by $a_n$, and a sequence $n \mapsto a_n$ by $(a_n)$. A rational $\mathcal{M}$-sequence $a$ is called an \textit{$\mathcal{M}$-Cauchy sequence} if the following condition is satisfied:
$$\forall k \in {M}^{>0} \exists N \in {M}^+ \forall n, m \in {M}^+ \big(n, m > N \implies |a_n - a_m| < \frac{1}{k}\big).$$
Let us introduce an equivalence relation on $\mathcal{M}$-Cauchy sequences: $a \sim b$ if
$$\forall k \in {M}^{>0} \exists N \in {M}^+ \forall n > N \big(|a_n - b_n| < \frac{1}{k}\big).$$

Denote by $K_\mathcal M$ the set of equivalence classes of all $\mathcal M$-Cauchy sequences modulo $\sim$. Now introduce the operations and the order relation on $K_{\mathcal{M}}$ (where $[a]$ denotes the equivalence class of $a$):
$$[a] +_{K_\mathcal M} [b] := [a + b],$$
$$-_{K_\mathcal M}[a] := [-a]$$
$$[a] \cdot_{K_\mathcal M} [b] := [a \cdot b],$$
$$[a] <_{K_\mathcal M} [b] :\iff \exists k \in {M}^{>0}\exists N \in {M}^+ \forall n> N (a_n + \frac{1}{k}<b_n),$$
$$[a] \leqslant_{K_\mathcal M} [b] :\iff ([a] <_{K_\mathcal M} [b] \vee [a] = [b]).$$
For $q \in F(\mathcal M)$ let $(q)$ denote the $\mathcal M$-Cauchy sequence $n \mapsto q$. It is easy to check that the following statement holds:
\begin{proposition}
    The introduced operations are well-defined and 
    $$\mathcal K_{\mathcal{M}} := (K_{\mathcal{M}}, +_{K_\mathcal M}, -_{K_\mathcal M}, \cdot_{K_\mathcal M}, [(0)], [(1)], \leqslant_{K_\mathcal M})$$ 
    is an ordered field. Moreover, $q \mapsto [(q)]$ is an embedding of ordered fields.
\end{proposition}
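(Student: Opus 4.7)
The plan is to follow the classical construction of the Cauchy completion of an ordered field, carefully checking that each step only uses what is available in $\mathsf{IOpen}+\mathsf{T_{x^y}}$ (principally, boundedness of Cauchy sequences, the existence of sufficiently large elements of $M^+$, and routine arithmetic in $\mathcal F(\mathcal M)$). Throughout, we exploit that $\sim$ is an equivalence relation (reflexivity and symmetry being trivial, transitivity by the triangle inequality in $\mathcal F(\mathcal M)$ with the usual $\tfrac{1}{2k}$-trick) and that every $\mathcal M$-Cauchy sequence $a$ is bounded in $\mathcal F(\mathcal M)$: choose $N$ witnessing the Cauchy condition for $k=1$ and take the maximum among $|a_0|,\dots,|a_N|,|a_{N+1}|+1$, which is well-defined in $\mathcal F(\mathcal M)$ because $M^+$ satisfies $\mathsf{IOpen}$ and hence bounded sets of natural indices have maxima.

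First I would verify that $+_{K_\mathcal M}$, $\cdot_{K_\mathcal M}$ and $<_{K_\mathcal M}$ are well-defined. For sums, $|(a_n+b_n)-(a_m+b_m)|\leqslant|a_n-a_m|+|b_n-b_m|$ gives Cauchyness; for products one writes $a_nb_n-a_mb_m=a_n(b_n-b_m)+b_m(a_n-a_m)$ and uses boundedness of $a$ and $b$ (choose $K\in M^{>0}$ with $|a_n|,|b_n|\leqslant K$ for all $n$, then force $|a_n-a_m|,|b_n-b_m|<\tfrac{1}{2Kk}$). Respect of $\sim$ is analogous. For the strict order, if $[a]<[b]$ is witnessed by $k,N$ and $a\sim a'$, $b\sim b'$, choose $N'\geqslant N$ so that $|a_n-a'_n|,|b_n-b'_n|<\tfrac{1}{3k}$ for $n>N'$; then $a'_n+\tfrac{1}{3k}<b'_n$ for $n>N'$, so $3k$ and $N'$ witness $[a']<[b']$.

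For the field axioms, associativity, commutativity, distributivity, and the identities for $[(0)]$ and $[(1)]$ descend directly from $\mathcal F(\mathcal M)$, and additive inverses are given by $[-a]$. The substantive step is multiplicative inverses. If $[a]\ne[(0)]$, then $a\not\sim(0)$, so there exist $k_0\in M^{>0}$ and arbitrarily large $n$ with $|a_n|\geqslant\tfrac{1}{k_0}$; combining this with the Cauchy condition (take $N$ with $|a_n-a_m|<\tfrac{1}{2k_0}$ for $n,m>N$) one gets a uniform lower bound $|a_n|\geqslant\tfrac{1}{2k_0}$ for all $n>N$. Define $b_n=1$ for $n\leqslant N$ and $b_n=a_n^{-1}\in F(\mathcal M)$ for $n>N$; the identity $b_n-b_m=(a_m-a_n)/(a_na_m)$ together with $|a_na_m|\geqslant\tfrac{1}{4k_0^2}$ shows $(b_n)$ is Cauchy, and $[a]\cdot[b]=[(1)]$. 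For totality of the order I would show that if $[a]\ne[b]$, then $a-b\not\sim(0)$, and the uniform bound just obtained for $a-b$ forces either $a_n-b_n\geqslant\tfrac{1}{2k_0}$ or $a_n-b_n\leqslant-\tfrac{1}{2k_0}$ for all large $n$ (by Cauchyness of $a-b$), yielding trichotomy.

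Compatibility of $<_{K_\mathcal M}$ with $+_{K_\mathcal M}$ and $\cdot_{K_\mathcal M}$ is immediate from the definition and the uniform lower bound argument. Finally, for the embedding $q\mapsto[(q)]$, the map evidently preserves $+,\cdot,0,1$; injectivity follows since $[(q)]=[(q')]$ forces $|q-q'|<\tfrac{1}{k}$ for every $k\in M^{>0}$ and so $q=q'$ (as $\mathcal F(\mathcal M)$ is archimedean over $\mathcal M$), and order preservation is by the same archimedean argument. The main obstacle I anticipate is keeping the quantitative bounds ($\tfrac{1}{2k}$, $\tfrac{1}{3k}$, $\tfrac{1}{2k_0}$, etc.) formally inside $M^+$: every such denominator is obtained as a product of existing elements of $M^{>0}$, which is allowed, so nothing beyond the ring operations of $\mathcal M$ and the axioms of $\mathsf{IOpen}$ is invoked.
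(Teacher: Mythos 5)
Your overall plan — the standard Cauchy-completion construction for an ordered field — is the one the paper intends (the paper leaves the proposition as ``easy to check''), and most of the bookkeeping you sketch is fine. However, there is a genuine error at the outset: you claim that every $\mathcal M$-Cauchy sequence $a$ is bounded in $\mathcal F(\mathcal M)$, obtained by ``taking the maximum among $|a_0|,\dots,|a_N|,|a_{N+1}|+1$,'' justified by $\mathsf{IOpen}$ giving maxima of bounded sets. This fails for two reasons. First, the set $\{|a_n| : n\leqslant N\}$ is a subset of $F(\mathcal M)$, not of $M^+$, and $\mathsf{IOpen}$ only gives a maximum (or least-number) principle for sets that are \emph{open-definable} in the arithmetical language; $a$ is an \emph{arbitrary} function $M^+\to F(\mathcal M)$, so this set need not be definable at all. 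Second, the claim is simply false: take $c\in M$ infinite and set $a_n=c^n$ for standard $n$ and $a_n=0$ for all other $n$. This is $\mathcal M$-Cauchy (the tail past any infinite index is constantly $0$), but $c, c^2, c^3,\dots$ is unbounded in $F(\mathcal M)$.

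The fix is local, not structural: every use you make of boundedness (the product case of well-definedness, and the construction of multiplicative inverses) only needs a bound on a \emph{tail} of the sequence. Pick $N$ so that $|a_n-a_m|<1$ for $n,m>N$; then $|a_n|\leqslant |a_{N+1}|+1$ for all $n>N$, and this may be rounded up to an element of $M^{>0}$ because $\mathcal M\subseteq^{IP}\mathcal F(\mathcal M)$ (whose proof in the paper uses only $\mathsf{IOpen}$). Since both the Cauchy condition and $\sim$ are tail conditions, eventual boundedness suffices everywhere. With that replacement, the rest of your argument — the $\tfrac{1}{2Kk}$ bookkeeping for products, the uniform lower bound $|a_n|\geqslant\tfrac{1}{2k_0}$ for inverses, trichotomy via sign-stability of $a-b$, and the embedding $q\mapsto[(q)]$ — goes through.
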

If there is no confusion, we will write $+$, $\cdot$ and $\leqslant$ instead of $+_{K_\mathcal M}$, $\cdot_{K_\mathcal M}$, and $\leqslant_{K_\mathcal M}$, $(a_n)$ instead of $[(a_n)]$ and $q$ instead of $[(q)]$. Also, we will think of $\mathcal F(\mathcal M)$ as a subfield of $\mathcal K_\mathcal M$. We will not use the notation $[a]$ for the equivalence class further, but we will use it for the integer part of $a$.
\begin{proposition}\label{prop_M_ip_K_M}
    $\mathcal{M} \subseteq^{\IP} \mathcal K_{\mathcal{M}}$.
\end{proposition}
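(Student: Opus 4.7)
The plan is to take an arbitrary representative $\mathcal M$-Cauchy sequence $(a_n)$, localize $[(a_n)]$ inside an interval of length $2$ with endpoints in $M$ via a single Cauchy estimate, and then finish by trichotomy between the two remaining candidate integers.

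Concretely, I would first apply the $\mathcal M$-Cauchy condition with denominator $2$ to obtain $N \in M^+$ with $|a_n - a_\ell| < 1/2$ for all $n, \ell > N$, and set $q := a_{N+1} \in F(\mathcal M)$, so that $q - 1/2 < a_n < q + 1/2$ for every $n > N$. From the definition of $<_{K_\mathcal M}$ these pointwise strict inequalities forbid both $[(a_n)] <_{K_\mathcal M} q - 1/2$ and $[(a_n)] >_{K_\mathcal M} q + 1/2$: for example, the first would require some $k \in M^{>0}$ with $a_n + 1/k < q - 1/2$ eventually, directly contradicting $a_n > q - 1/2$. Hence $q - 1/2 \leqslant [(a_n)] \leqslant q + 1/2$ in $\mathcal K_\mathcal M$.

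Next I would apply Shepherdson's theorem (\cref{shepherdson_theorem}), available because the standing assumption $(\mathcal M^+, x^y) \vDash \mathsf{IOpen} + \mathsf{T_{x^y}}$ gives in particular $\mathcal M^+ \vDash \mathsf{IOpen}$, so $\mathcal M \subseteq^{IP} \mathcal R(\mathcal M) \supseteq \mathcal F(\mathcal M)$. Applied to $q - 1/2 \in F(\mathcal M)$ this yields $k \in M$ with $k \leqslant q - 1/2 < k + 1$. Combining with the previous step gives $k \leqslant [(a_n)] \leqslant q + 1/2 < k + 2$, i.e. $[(a_n)] \in [k, k+2)$.

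Finally, trichotomy in the ordered field $\mathcal K_\mathcal M$ forces either $[(a_n)] < k + 1$ (so $k$ is the integer part) or $k + 1 \leqslant [(a_n)] < k + 2$ (so $k+1$ is the integer part); in either case we have produced an element of $M$ serving as the integer part of $[(a_n)]$. No substantive obstacle arises: the only delicate point is the passage between pointwise strict inequalities on Cauchy representatives and the strict order $<_{K_\mathcal M}$ on equivalence classes, which is immediate from the definition.
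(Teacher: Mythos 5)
Your proposal is correct and follows essentially the same route as the paper: apply the Cauchy condition with $k = 2$ to pin $[(a_n)]$ inside an interval of length $2$ centered at a rational term $q = a_{N+1}$, then take an integer part of that rational in $\mathcal M$ and finish by trichotomy among two or three candidates. The one place you diverge is in getting $\mathcal M \subseteq^{IP} \mathcal F(\mathcal M)$: you invoke Shepherdson's theorem (passing through $\mathcal R(\mathcal M)$), while the paper proves it directly from open induction applied to the open formula $n' \cdot k \leqslant n$, giving integer division without building a real closure; both are valid, but the paper's argument is lighter and more self-contained.
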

\begin{proof}
    First, we note that $\mathcal{M} \subseteq^{\IP} \mathcal{F}(\mathcal{M})$ by \autoref{shepherdson_theorem}. Now, let $a \in K_{\mathcal{M}}$. By definition, 
    $$\exists N \in {M}^+ \forall n \in {M}^+ \big(n > N \rightarrow |a_n - a_{N + 1}| < \frac{1}{2}\big).$$ Since $\mathcal{M} \subseteq^{\IP} \mathcal{F}(\mathcal{M})$, $\exists m \in {M}$ such that $m \leqslant a_{N + 1} < m + 1$. Then $m - \frac{1}{2} \leqslant a \leqslant m + \frac{3}{2}$. So one of the $m - 1,\ m,\ m+1$ is an integer part of $a$.
\end{proof}

\begin{lemma}[{\cite[Lemma 7.9]{krapp:2019}}]\label{lemma_for_density_of_F(M)}
    If $\mathcal M \subseteq^{\IP} \mathcal L$ for some ordered field $\mathcal L$, then $\mathcal F(\mathcal M)$ is dense in $\mathcal L$.
\end{lemma}

Let us denote by $\mathcal R(\mathcal M)$ the real closure of $\mathcal F(\mathcal M)$.

\begin{corollary}\label{corollary_F(M)_is_dense_in_K_M}
    $\mathcal{F}(\mathcal{M})$ is dense in $\mathcal K_{\mathcal{M}}$ and $\mathcal R(\mathcal M)$.
\end{corollary}

\begin{proof}
    We have $\mathcal{M} \subseteq^{\IP} \mathcal K_{\mathcal{M}}$ by \cref{prop_M_ip_K_M} and $\mathcal M \subseteq^{\IP} \mathcal R (\mathcal M)$ by \autoref{shepherdson_theorem}, hence, it remains to apply \cref{lemma_for_density_of_F(M)}.
\end{proof}

\begin{proposition}\label{prop_R(M)_subfield_K_M}
    $\mathcal R(\mathcal{M})$ can be embedded into $\mathcal K_\mathcal M$ (with $\mathcal F(\mathcal M)$ fixed).
\end{proposition}

\begin{proof}
    By \cref{corollary_F(M)_is_dense_in_K_M}, $\mathcal F(\mathcal M)$ is dense in $\mathcal R(\mathcal M)$. So, for $r \in R(\mathcal M)$, choose an $q_n \in F(\mathcal M)$ such that $|r - q_n| < \frac{1}{n + 1}$ for all $n \in M^+$ (such $q_n$ exists since $(r, r + \frac{1}{n + 1}) \cap F(\mathcal M) \ne \varnothing$). Clearly, a map $r \mapsto (q_n)$ is an embedding of $\mathcal R(\mathcal M)$ into $\mathcal K_\mathcal M$ with $\mathcal F(\mathcal M)$ fixed pointwise.
\end{proof}

By the proposition above, $\mathcal R(\mathcal M)$ can be considered as a dense subfield of $\mathcal K_\mathcal M$.

\begin{definition}[\cite{scott1969}]
    An ordered field $\mathcal L$ is called \emph{complete} if it has no proper ordered field extension containing $\mathcal L$ as a dense subfield. If $\mathcal L \subseteq \mathcal L'$ is an ordered field extension, then $\mathcal L'$ is called a \emph{completion} of $\mathcal L$ if $\mathcal L'$ is complete and $\mathcal L$ is dense in $\mathcal L'$.
\end{definition}

\begin{theorem}[\cite{scott1969}]\label{theorem_on_order_completeness}
    For any ordered field $\mathcal L$ there is a completion of $\mathcal L$ and it is unique up to an isomorphism fixing $L$ pointwise. Moreover, the completion of a real closed field is real closed.
\end{theorem} 

\begin{proposition}\label{prop_on_completion}
    $\mathcal K_\mathcal M$ is a completion of $\mathcal R(\mathcal M)$.
\end{proposition}

\begin{proof}
    By \cref{prop_R(M)_subfield_K_M} it is sufficient to show that $\mathcal K_\mathcal M$ is complete. Aiming at a contradiction, assume that there is an ordered field $\mathcal L \supsetneq \mathcal K_\mathcal M$ such that $\mathcal K_\mathcal M$ is dense in $\mathcal L$. Note that since $\mathcal F(\mathcal M)$ is dense in $\mathcal K_\mathcal M$, it is dense in $\mathcal L$. Fix an arbitrary $a \in L \setminus K_\mathcal M$. Given $n \in M^+$ choose $q_n \in (a, a + \frac{1}{n + 1}) \cap F(\mathcal M)$ and $q'_n \in (a - \frac{1}{n + 1}, a) \cap F(\mathcal M)$ (these sets are nonempty by density). Clearly, $n \mapsto q_n$ and $n \mapsto q_n'$ are $\mathcal M$-Cauchy sequences, denote it by $r, r' \in K_\mathcal M$ its equivalence classes respectively. By the assumption, $r, r' \ne a$ and, hence, $r \ne r'$. Clearly, it implies that $r' < a < r$. But $r = r'$ since $|q_n - q_n'| < \frac{2}{n + 1}$, a contradiction.
\end{proof}

\begin{corollary}\label{corollary_K_M_is_rcf}
    $\mathcal K_{\mathcal{M}}$ is a real closed field. 
\end{corollary}
    
\begin{proof}
    Follows from \autoref{theorem_on_order_completeness} and \cref{prop_on_completion}.
\end{proof}

Now we need to define an exponentiation on $\mathcal K_{\mathcal{M}}$. First define base-2 exponentiation $\exp_2\colon  {F}(\mathcal{M})\to K_{\mathcal{M}}^{>0}$. Let $n, b, c \in {M}^{+}$, $c > 0$. Let 
$$B(n, b, c) = \{m \in {M}^+\mid m^c \leqslant 2^{n c + b}\}.$$ 
By (T10) there exists a maximum in $B(n, b, c)$. Let $d_n = \max B(n, b, c)$.
By definition, $$\frac{d_n^c}{2^{n c}} \leqslant 2^b < \frac{(d_n + 1)^c}{2^{n c}}.$$

Now define $\exp_2\big(\frac{b}{c}\big)$ as $\big(\frac{d_n}{2^n}\big)$. For $a < 0$ define $\exp_2(a)$ as $(\exp_2(-a))^{-1}$.

\begin{remark}
     Under such definition $\exp_2(b) = \exp_2(\frac{b}{1}) = 2^b$, since for $c = 1$ we have $d_n = 2^{n + b}$. In particular, $\exp_2(0) = 1$. 
\end{remark}

\begin{lemma}\label{lemma_d_n/2^n_increases}
    For all $b, c \in M^{>0}$, the $\mathcal{M}$-sequence $(\frac{d_n}{2^n})$ increases and $(\frac{d_n + 1}{2^n})$ decreases.
\end{lemma}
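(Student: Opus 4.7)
The two claims are essentially already embedded in the argument used to show that $(d_n/2^n)$ is an $\mathcal{M}$-Cauchy sequence in the preceding proposition, so the plan is to extract them cleanly from the defining property of $d_n$ as $\max B(n,b,c)$. Fix $b,c \in M^{>0}$ and $n < m$ in $M^+$. I will show both monotonicity statements by testing candidate elements against the maximum in $B(m,b,c)$.

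For the first claim, I would show that $d_n \cdot 2^{m-n}$ lies in $B(m,b,c)$, which forces $d_n \cdot 2^{m-n} \leqslant d_m$, i.e.\ $d_n/2^n \leqslant d_m/2^m$. The membership is a direct computation using (T5) and (T6): $(d_n \cdot 2^{m-n})^c = d_n^c \cdot 2^{(m-n)c} \leqslant 2^{nc+b} \cdot 2^{(m-n)c} = 2^{mc+b}$, where the middle inequality is the defining bound $d_n^c \leqslant 2^{nc+b}$.

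For the second claim, I would argue by contradiction: suppose $(d_m+1)/2^m > (d_n+1)/2^n$, which, multiplied out, means $d_m + 1 > (d_n+1)\cdot 2^{m-n}$, and since $\mathcal{M}$ is discretely ordered this gives $d_m \geqslant (d_n+1)\cdot 2^{m-n}$. Applying (T8) (monotonicity of $y \mapsto y^c$ on positives) and (T5)--(T6), one obtains $d_m^c \geqslant (d_n+1)^c \cdot 2^{(m-n)c}$. But the maximality of $d_n$ means $d_n + 1 \notin B(n,b,c)$, i.e.\ $(d_n+1)^c > 2^{nc+b}$, so $d_m^c > 2^{mc+b}$, contradicting $d_m \in B(m,b,c)$.

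I do not expect any serious obstacle here: both halves are just the ``floor'' inequalities for $c$-th roots translated between scales $2^n$ and $2^m$, and every manipulation of powers used has already been justified by the axioms of $\mathsf{T}_{x^y}$ available in $(\mathcal{M}^+, x^y)$. The only point requiring care is that the step $d_m + 1 > (d_n+1)\cdot 2^{m-n} \Rightarrow d_m \geqslant (d_n+1) \cdot 2^{m-n}$ uses the discreteness of the order on $\mathcal{M}$, which is part of our standing hypothesis on $\mathcal M$.
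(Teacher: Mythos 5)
Your proof is correct: the manipulations of powers via (T4)--(T6) and (T8), the membership test $d_n \cdot 2^{m-n} \in B(m,b,c)$, and the discreteness step $d_m + 1 > (d_n+1)2^{m-n} \Rightarrow d_m \geqslant (d_n+1)2^{m-n}$ are all valid. The paper gives no proof of this lemma, simply citing Krapp's Lemma 7.21, but your argument is exactly the monotonicity computation already implicit in the preceding proposition's Cauchy estimate, so there is nothing further to reconcile.
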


\begin{proof}
    Let $m > n \in {M}^+$. Then 
    $$\frac{(d_n 2^{m - n})^c}{2^{m c}} = \frac{d_n^c}{2^{n c}} \leqslant 2^b.$$ Since $d_m$ is the greatest number with the property $\frac{d_m^c}{2^{m c}} \leqslant 2^b$, we get $d_n 2^{m - n}\leqslant d_m$, so $$\frac{d_n}{2^n}\leqslant\frac{d_m}{2^m}.$$ The second part of the statement can be proved in a similar way (just observe that $d_m + 1$ is the least with the property $2^b < \frac{(d_m + 1)^c}{2^{mc}}$).
\end{proof}

\begin{proposition}
    $\exp_2\big(\frac{b}{c}\big)$ is well-defined.
\end{proposition}

\begin{proof}
    Clearly, it is enough to prove the claim for $\frac{b}{c} > 0$.
    
    First, we prove that $\bigl(\frac{d_n}{2^n}\bigr)$ is an $\mathcal{M}$-Cauchy sequence. Let $m > n \in {M}^+$. By \cref{lemma_d_n/2^n_increases} we have $\frac{d_n}{2^n}\leqslant\frac{d_m}{2^m}$, by definition we have $$\frac{d_m^c}{2^{mc}} \leqslant 2^b < \frac{(d_n + 1)^c}{2^{nc}}$$ and, hence, $\frac{d_m}{2^m} < \frac{d_n + 1}{2^n}$. So, 
    $$|\frac{d_n}{2^n} - \frac{d_m}{2^m}| < \frac{1}{2^n} < \frac{1}{n}$$ (the latter inequality follows from the Bernoulli inequality (T11)). So $\bigl(\frac{d_n}{2^n}\bigr)$ is an $\mathcal{M}$-Cauchy sequence.
    
    Now we prove that the result does not depend on the choice of the numerator and denominator. To do this, it is sufficient to notice that for any $l \in {M}^{>0}$,
    $$\frac{m^{l c}}{2^{n c l}} \leqslant 2^{b l} <\frac{(m+1)^{l c}}{2^{n c l}} \iff \frac{m^{c}}{2^{n c}} \leqslant 2^b < \frac{(m + 1)^{c}}{2^{n c}}.$$ It follows that $\max B(n, b, c) = \max B(n, b l, c l)$.
\end{proof}

\begin{lemma}\label{lemma_(n+1)^l/n^l->1}
    For all $l \in {M}^{>0}$, the $\mathcal{M}$-sequence $\Big(\frac{(n + 1)^l}{n^l}\Big)$ is an $\mathcal{M}$-Cauchy sequence and is equivalent to the sequence $(1)$, that is, $\Big[\Big(\frac{(n + 1)^l}{n^l}\Big)\Big] = 1$.
\end{lemma}

\begin{proof}
    Let $n, l \in M^+$. By the Bernoulli inequality (T11) we have 
    $$\frac{n^l}{(n + 1)^l} = \Big(1 - \frac{1}{n + 1}\Big)^l \geqslant 1 - \frac{l}{n + 1},$$ so, 
    $$\frac{(n + 1)^l}{n^l} \leqslant \frac{1}{1 - \frac{l}{n + 1}} = \frac{n + 1}{n + 1 - l} = 1 + \frac{l}{n + 1 - l}$$ 
    (for sufficiently large $n$). Note that $\frac{(n + 1)^l}{n^l} \geqslant 1$, so $\Big(\frac{(n + 1)^l}{n^l}\Big)$ is equivalent to the sequence $(1)$.
\end{proof}

\begin{proposition}\label{prop_exp_is_monotone_homomorphism_from_F(M)}
    $\exp_2$ is an order-preserving embedding of an additive group $({F}(\mathcal{M}), +)$ into $(K_{\mathcal{M}}^{>0}, \cdot)$.
\end{proposition}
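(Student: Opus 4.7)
I would prove multiplicativity first and then derive strict monotonicity as a short corollary, exploiting the fact that $\exp_{\mathcal M}$ restricted to $M^+$ coincides with $\exp_2$.

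To prove $\exp_{\mathcal M}(r+s) = \exp_{\mathcal M}(r) \cdot \exp_{\mathcal M}(s)$, first reduce to the case $r,s \in F(\mathcal M)^+$ by a standard sign analysis using the definition $\exp_{\mathcal M}(-a) = \exp_{\mathcal M}(a)^{-1}$; for instance, if $r \geqslant 0 > s$ and $r+s \geqslant 0$, write $r = (r+s) + (-s)$ with both summands non-negative and apply the non-negative case. For non-negative $r = b/c$ and $s = b'/c$ with a common denominator $c$, let $(d_n), (d_n'), (e_n)$ be the defining sequences of $\exp_{\mathcal M}(b/c)$, $\exp_{\mathcal M}(b'/c)$, $\exp_{\mathcal M}((b+b')/c)$. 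Multiplying the defining inequalities gives
\[(d_n d_n')^c \leqslant 2^{2nc + b + b'} < ((d_n+1)(d_n'+1))^c,\]
so by (T8) and the maximality of $e_{2n} = \max B(2n, b+b', c)$ we obtain $d_n d_n' \leqslant e_{2n} < (d_n+1)(d_n'+1)$, hence $|e_{2n} - d_n d_n'| \leqslant d_n + d_n'$.

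The next step is to establish the chain of equivalences $(d_n d_n'/4^n) \sim (e_{2n}/4^n) \sim (e_n/2^n)$. The first uses the bound $(d_n + d_n')/4^n \leqslant (d_n/2^n + d_n'/2^n)/2^n$ together with the fact that the Cauchy sequences $(d_n/2^n), (d_n'/2^n)$ are bounded by some element of $F(\mathcal M)$ and that $2^n$ is unbounded in $M$ (from $2^n \geqslant n$, the Bernoulli inequality already invoked in the well-definedness proof). The second equivalence uses the monotonicity lemma applied to $(e_n)$: since $e_n/2^n$ increases and $(e_n + 1)/2^n$ decreases, we get $e_n/2^n \leqslant e_{2n}/4^n < (e_n + 1)/2^n$, hence $|e_{2n}/4^n - e_n/2^n| < 1/2^n$. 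Chaining these gives $\exp_{\mathcal M}(b/c)\cdot \exp_{\mathcal M}(b'/c) = \exp_{\mathcal M}((b+b')/c)$.

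Given the homomorphism property, strict monotonicity reduces to showing $r > 0 \Rightarrow \exp_{\mathcal M}(r) > 1$. Writing $r = b/c$ with $b \in M^{>0}$, multiplicativity yields $\exp_{\mathcal M}(r)^c = \exp_{\mathcal M}(cr) = \exp_{\mathcal M}(b) = 2^b \geqslant 2$, where the middle equality uses that $\exp_{\mathcal M}$ extends $\exp_2$ on $M^+$. Since $(2^n)^c = 2^{nc} \leqslant 2^{nc+b}$ we have $d_n \geqslant 2^n$, so $\exp_{\mathcal M}(r) \geqslant 1$ in $\mathcal K_{\mathcal M}$; equality is ruled out because then $1 = \exp_{\mathcal M}(r)^c = 2^b \geqslant 2$. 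Hence $\exp_{\mathcal M}(r) > 1$, and for $r < s$ we conclude $\exp_{\mathcal M}(s) = \exp_{\mathcal M}(r)\exp_{\mathcal M}(s-r) > \exp_{\mathcal M}(r)$. The step requiring the most care is the null sequence estimate for $(d_n + d_n')/4^n$: it needs a genuinely $F(\mathcal M)$-valued bound on $(d_n/2^n)$ (supplied by the Cauchy property) combined with the fact that $2^n$ outpaces every element of $F(\mathcal M)$, which in turn uses $\mathcal M \subseteq^{IP} F(\mathcal M)$ and $2^n \geqslant n$.
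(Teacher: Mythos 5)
Your multiplicativity argument is sound: the sign reduction, the estimate $d_n d_n' \leqslant e_{2n} < (d_n+1)(d_n'+1)$ obtained from (T8) and maximality, and the chain $(d_n d_n'/4^n) \sim (e_{2n}/4^n) \sim (e_n/2^n)$ (using boundedness of the defining Cauchy sequences, $2^n \geqslant n$, $\mathcal M\subseteq^{IP}\mathcal F(\mathcal M)$, and the monotonicity lemma for $(e_n/2^n)$ and $((e_n+1)/2^n)$) all check out. The paper itself gives no argument here and only cites \cite[Lemma 7.23]{krapp:2019}.

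The monotonicity half, however, has a genuine gap. You write $\exp_{\mathcal M}(r)^c = \exp_{\mathcal M}(cr)$ and $1^c = 1$ for $c \in M^{>0}$. Since $c$ may be a nonstandard element of $M$, the $c$-th power of an element of $K_{\mathcal M}$ is not defined: $K_{\mathcal M}$ is an external quotient construction carrying no exponentiation by elements of $M$, and iterating the homomorphism identity ``$c$ times'' would be an internal induction not available in this setting. So the step ruling out $\exp_{\mathcal M}(r)=1$ does not go through as written, although $d_n \geqslant 2^n \Rightarrow \exp_{\mathcal M}(r) \geqslant 1$ is fine. To finish correctly you can argue on the defining sequence itself: from $(d_n+1)^c > 2^{nc+b} \geqslant 2\cdot 2^{nc}$ and the Bernoulli estimate $\bigl(1 + \tfrac{1}{2c}\bigr)^c \leqslant \tfrac{2c+1}{c+1} < 2$ (a consequence of the $(1-x)^n \geqslant 1-nx$ form of (T11) established in the proof of the lemma on $\bigl((n+1)^l/n^l\bigr)$), deduce $(d_n+1)/2^n > 1 + \tfrac{1}{2c}$ for all $n$, hence $d_n/2^n > 1 + \tfrac{1}{4c}$ once $2^n > 4c$, which gives $\exp_{\mathcal M}(b/c) > 1$ by the definition of $<_{K_{\mathcal M}}$. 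Alternatively, apply the paper's \cref{lemma_mth_root_respect_order} (whose proof is independent of the present proposition) with lower bound $1$: $1 < \exp_{\mathcal M}(b/c) \Leftrightarrow 1 < \exp_2(b)$.
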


\begin{proof}
    \cite[Lemma 7.23]{krapp:2019}.
\end{proof} 

We will say that a sequence $f\colon  M^+ \to K_\mathcal M$ \textit{tends} to $b \in K_{\mathcal{M}}$ if 
$$\forall k \in M^{>0} \exists N \in M^+ \forall n > N\Big(|f_n - b| < \frac{1}{k}\Big).$$ Notation: $\lim\limits_{n \to \infty} f_n = b$.

\begin{lemma}\label{lemma_mth_root_respect_order}
    Let $b, m \in {M}^{>0}$, $a, c \in {F}(\mathcal{M})^{>0}$. Then 
    $$c < \exp_2\big(\frac{b}{m}\big) < a \iff c^m < 2^b < a^m.$$
\end{lemma}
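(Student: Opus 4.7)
The plan is to exploit the defining sandwich $\frac{d_n^m}{2^{nm}} \leqslant \exp_2(b) < \frac{(d_n+1)^m}{2^{nm}}$ for all $n \in M^+$, together with the fact that $\big(\frac{d_n}{2^n}\big)$ and $\big(\frac{d_n+1}{2^n}\big)$ are equivalent representatives of $\exp_{\mathcal M}(b/m)$ in $\mathcal K_{\mathcal M}$ (they differ by $\frac{1}{2^n} \to 0$), and the strict monotonicity $x < y \Leftrightarrow x^m < y^m$ on $\mathcal F(\mathcal M)^{>0}$ obtained by extending axiom (T8) fraction-wise. The direction $(\Rightarrow)$ is then essentially routine; the work is in $(\Leftarrow)$, where a uniform gap in $\mathcal F(\mathcal M)$ must be extracted from the strict inequality of $m$-th powers.

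For $(\Rightarrow)$, I would unpack $c < \exp_{\mathcal M}(b/m) < a$ in $\mathcal K_{\mathcal M}$ using the two representatives above: for some $k_1, k_2 \in M^{>0}$ and all $n$ past some $N$, $c + \frac{1}{k_1} < \frac{d_n}{2^n}$ and $\frac{d_n+1}{2^n} + \frac{1}{k_2} < a$ in $\mathcal F(\mathcal M)$. Raising both sides to the $m$-th power via (T8) and chaining with the sandwich yields $c^m < \frac{d_n^m}{2^{nm}} \leqslant \exp_2(b) < \frac{(d_n+1)^m}{2^{nm}} < a^m$.

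For $(\Leftarrow)$, set $\delta := \exp_2(b) - c^m > 0$ and $\delta' := a^m - \exp_2(b) > 0$ in $\mathcal F(\mathcal M)$. I would use two estimates, both ultimately coming from (T11). First, to bound the width of the sandwich, the inequality $\big(1 - \frac{1}{d_n+1}\big)^m \geqslant 1 - \frac{m}{d_n+1}$ (valid once $d_n + 1 > m$, obtainable from (T11) exactly as inside the proof of \cref{prop_iopen(x^y)|--TT_x^y}) gives
\[
\frac{(d_n+1)^m - d_n^m}{2^{nm}} \leqslant \frac{\exp_2(b)\, m}{d_n + 1 - m},
\]
and since $(d_n+1)^m > 2^{nm}$ forces $d_n + 1 > 2^n$ by (T8), this width is $O(m/2^n)$ and eventually smaller than $\min(\delta, \delta')/2$. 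For such $n$, $\frac{d_n^m}{2^{nm}} > c^m + \delta/2$ and $\frac{(d_n+1)^m}{2^{nm}} < a^m - \delta'/2$. Second, I would convert an $m$-th power gap into a linear one: substituting $x = (u - v)/u \in [0, 1]$ into the standard Bernoulli inequality $(1 - x)^m \geqslant 1 - m x$ yields $u^m - v^m \leqslant m\, u^{m-1}(u - v)$ for $0 < v < u$ in $\mathcal F(\mathcal M)$. Applied with $u = \frac{d_n}{2^n}$, $v = c$ (here $\frac{d_n}{2^n} > c$ by (T8) applied to $\big(\frac{d_n}{2^n}\big)^m > c^m$) and the bound $\big(\frac{d_n}{2^n}\big)^{m-1} \leqslant \frac{\exp_2(b)}{c}$ (from $\big(\frac{d_n}{2^n}\big)^m \leqslant \exp_2(b)$ and $\frac{d_n}{2^n} > c$), this produces the uniform gap
\[
\frac{d_n}{2^n} - c \geqslant \frac{c\, \delta}{2 m \exp_2(b)} =: \eta \in \mathcal F(\mathcal M)^{>0}.
\]
Choosing $k \in M^{>0}$ with $\frac{1}{k} < \eta$ (possible by $\mathcal M \subseteq^{IP} \mathcal F(\mathcal M)$) yields $c + \frac{1}{k} < \frac{d_n}{2^n}$ for all large $n$, hence $c < \exp_{\mathcal M}(b/m)$ in $\mathcal K_{\mathcal M}$. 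The upper bound $\exp_{\mathcal M}(b/m) < a$ is symmetric, applying the same estimate with $u = a$ and $v = \frac{d_n+1}{2^n}$ to obtain $a - \frac{d_n+1}{2^n} \geqslant \frac{\delta'}{2 m\, a^{m-1}}$.

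The main obstacle is the two-sided use of (T11): once to control the sandwich width $\frac{(d_n+1)^m - d_n^m}{2^{nm}}$ uniformly, and once to translate an $m$-th power gap back into a linear gap on $\frac{d_n}{2^n} - c$. Both steps must keep $m$ as a formal element of $M^{>0}$ rather than a standard integer, so the Bernoulli inequality in both of its forms (namely (T11) itself and the complementary inequality derived in the proof of \cref{prop_iopen(x^y)|--TT_x^y}) is the crucial tool that makes the argument go through uniformly in $m$ within $\mathsf{IOpen} + \mathsf{T_{x^y}}$.
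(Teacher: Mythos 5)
Your proof is correct, and the overall strategy -- exploit the sandwich $\frac{d_n^m}{2^{nm}} \leqslant \exp_2(b) < \frac{(d_n+1)^m}{2^{nm}}$ and show the width of the sandwich eventually falls below the power-gap, then use (T8) to drop the $m$-th powers -- is the same as the paper's. The difference is in how the final uniform gap in $\mathcal F(\mathcal M)$ is extracted. The paper invokes the monotonicity of $\bigl(\frac{d_n}{2^n}\bigr)$ and $\bigl(\frac{d_n+1}{2^n}\bigr)$ (its Lemma 3.1): once a single $n$ gives $\frac{d_n+1}{2^n} < a$, the decrease of that sequence supplies the fixed gap $a - \frac{d_n+1}{2^n}$ for all later indices, and Lemma 3.2 (asymptotics of $\frac{(k+1)^l}{k^l}$) supplies the width bound. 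You instead derive both estimates directly from (T11): the width bound via $\bigl(1 - \frac{1}{d_n+1}\bigr)^m \geqslant 1 - \frac{m}{d_n+1}$, and then, rather than appealing to monotonicity, you translate the surviving $m$-th-power gap $\delta/2$ into an explicit linear gap $\eta = \frac{c\delta}{2m\exp_2(b)}$ via the complementary Bernoulli form $u^m - v^m \leqslant m u^{m-1}(u-v)$. This second step is sound but redundant in the paper's framework: once $\frac{d_n}{2^n} > c$ holds for one $n$, the increasing monotonicity of the sequence makes $\frac{d_n}{2^n} - c$ itself the uniform gap, no further estimate needed. So your version is slightly more self-contained (it bypasses Lemmas 3.1 and 3.2 entirely and gives a fully quantitative gap), at the cost of one extra Bernoulli manipulation that the paper's cited monotonicity renders unnecessary.
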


\begin{proof}
    Suppose $c^m < 2^b <a^m$. Let 
    $$d_n := \max B(n, b, m) = \max \{d \in {M}^+\mid d^m \leqslant 2^{n m + b}\}.$$ Then, for any $n \in {M}^+$, 
    $$\frac{d_n^m}{2^{n m}}\leqslant 2^b < \frac{(d_n+1)^m}{2^{n m}}.$$ 
    It is clear that $d_n \geqslant 2^n$, so by the Bernoulli inequality $d_n > n$. On the other hand, it is easy to see that $d_n \leqslant 2^{n + b}$. \cref{lemma_(n+1)^l/n^l->1} implies that $\lim\limits_{n\to\infty}\frac{(d_n + 1)^m}{d_n^m} = 1$. Then we have 
    $$|2^b - \frac{(d_n+1)^m}{2^{n m}}| \leqslant |\frac{d_n^m}{2^{n m}} - \frac{(d_n+1)^m}{2^{n m}}| = \frac{d_n^m}{2^{n m}}| 1 - \frac{(d_n+1)^m}{d_n^m}| \leqslant 2^{m b} | 1 - \frac{(d_n+1)^m}{d_n^m}|,$$
    hence, $\lim\limits_{n \to \infty} \frac{(d_n+1)^m}{2^{n m}} = 2^b$ and there is an $n \in {M}^+$ such that $\frac{(d_n+1)^m}{2^{n m}} < a^m$. Hence, $\frac{d_n + 1}{2^n} < a$. Since $\frac{d_n + 1}{2^n}$ is decreasing, $\exp_2(\frac{b}{m}) < a$. Similarly, it can be proved that $c < \exp_2(\frac{b}{m})$.
    
    It remains for us to prove the opposite implication. Assume, for example, that $a^m \leqslant 2^b$. Arguing similarly to the previous, we obtain $a \leqslant \exp_2(\frac{b}{m})$, a contradiction.
\end{proof}

\begin{lemma}\label{lemma_exp(1/n) < 1 + 1/n}
    $\forall n \in M^{>0} \exp_2(\frac{1}{n}) \leqslant 1 + \frac{1}{n}$.
\end{lemma}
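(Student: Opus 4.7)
The plan is to derive the inequality from the Bernoulli axiom (T11) via \cref{lemma_mth_root_respect_order}. First, I would instantiate (T11) with $a = 1$, $b = n$, $y = 1$, $x = n$ (the condition $x \geqslant y$ holds since $n \geqslant 1$); this yields, in $\mathcal M$, $(n+1)^n \cdot n \geqslant (2n) \cdot n^n$, equivalently $(1 + \tfrac{1}{n})^n \geqslant 2 = \exp_2(1)$.

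Next, I would argue by contradiction. Suppose $\exp_{\mathcal M}(\tfrac{1}{n}) > 1 + \tfrac{1}{n}$. By the density of $\mathcal F(\mathcal M)$ in $\mathcal K_{\mathcal M}$ (Corollary 3.1 above) pick a rational $q \in F(\mathcal M)$ with
\[
1 + \tfrac{1}{n} < q < \exp_{\mathcal M}(\tfrac{1}{n}).
\]
Applying \cref{lemma_mth_root_respect_order} with $b = 1$, $m = n$, $c = q$ (and any $a \in F(\mathcal M)^{>0}$ strictly exceeding $\exp_{\mathcal M}(\tfrac{1}{n})$, obtained by taking the integer part plus one), the left-hand inequality gives $q^n < \exp_2(1) = 2$.

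To reach a contradiction I would use the monotonicity axiom (T8) to show $q^n > (1 + \tfrac{1}{n})^n$. Writing $q = p/r$ with $p, r \in M^{>0}$, the inequality $q > 1 + \tfrac{1}{n}$ becomes $p \cdot n > r \cdot (n+1)$ in $M^{>0}$; applying (T8) with exponent $n$ and (T5) yields $p^n n^n > r^n (n+1)^n$, i.e.\ $q^n > (1 + \tfrac{1}{n})^n \geqslant 2$, contradicting $q^n < 2$.

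The only subtlety is the passage between $F(\mathcal M)$ and $M$ when invoking (T8) and (T11), since those axioms are stated only on $M^+$; the reduction $q = p/r$ above handles this cleanly, so I do not expect a genuine obstacle. Essentially the whole argument is just ``Bernoulli + density + \cref{lemma_mth_root_respect_order}.''
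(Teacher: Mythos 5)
Your proposal is correct, and the key ingredients are the same as in the paper's proof: the Bernoulli axiom (T11) gives $(1 + \frac{1}{n})^n \geqslant 2 = \exp_2(1)$, and then \cref{lemma_mth_root_respect_order} with $b = 1$, $m = n$ converts this into the bound on $\exp_{\mathcal M}(\frac{1}{n})$. The paper simply reads off the conclusion as the contrapositive of the equivalence $c < \exp_{\mathcal M}(\frac{1}{n}) \Leftrightarrow c^n < \exp_2(1)$ applied to $c = 1 + \frac{1}{n}$, whereas you take a detour by contradiction, inserting a rational $q$ strictly between $1 + \frac{1}{n}$ and $\exp_{\mathcal M}(\frac{1}{n})$ via density and then invoking (T8) to compare $q^n$ with $(1+\frac{1}{n})^n$; this is a valid way to stay within the literal strict-inequality form of \cref{lemma_mth_root_respect_order}, but the extra step is not needed since the lemma's proof already handles the non-strict cases.
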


\begin{proof}
    By the Bernoulli inequality $2^1 = 2 \leqslant (1 + \frac{1}{n})^n$. Then by \cref{lemma_mth_root_respect_order} $\exp_2(\frac{1}{n}) \leqslant 1 + \frac{1}{n}$.
\end{proof}

Now define $\exp_2$ on all $K_{\mathcal{M}}$. For $(a_n) = (\frac{b_n}{c_n}) \in K_{\mathcal{M}}$, $(a_n) > 0$, let $\exp_2(a) =\Big(\frac{\max B(n, b_n, c_n)}{2^n}\Big)$, $\exp_2(0) := 1$, $\exp_2(-a) := (\exp_2(a))^{-1}$.

\begin{proposition}
    $\exp_2$ is well-defined.
\end{proposition}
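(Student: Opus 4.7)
I need to verify: (a) the rational sequence $\big(\frac{d_n}{2^n}\big)$, where $d_n:=\max B(n,b_n,c_n)$, is an $\mathcal M$-Cauchy sequence; (b) its class in $K_{\mathcal M}$ depends only on $a$, not on the choice of fractional representation $a_n=b_n/c_n$; (c) the value agrees with the earlier definition on $F(\mathcal M)$; and (d) $\exp_{\mathcal M}(a)>0$ in $K_{\mathcal M}$, so that the clause $\exp_{\mathcal M}(-a):=(\exp_{\mathcal M}(a))^{-1}$ makes sense. Item (c) is immediate: taking constant $b_n=b$, $c_n=c$ literally reduces the new formula to the old one. For (d), once $n$ is large enough that $b_n\geqslant 0$ we have $(2^n)^{c_n}=2^{nc_n}\leqslant 2^{nc_n+b_n}$, so $2^n\in B(n,b_n,c_n)$ and hence $d_n/2^n\geqslant 1$ eventually.

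The substance is in (a) and (b), and both would rest on two ingredients. First, an \emph{approximation bound}: for every $n$,
\[
    \frac{d_n}{2^n}\;\leqslant\;\exp_{\mathcal M}\Big(\frac{b_n}{c_n}\Big)\;\leqslant\;\frac{d_n}{2^n}+\frac{1}{2^n},
\]
which holds because, for fixed $b_n$ and $c_n$, the first unnumbered lemma of this section shows $\big(\frac{d_k}{2^k}\big)_k$ increases while $\big(\frac{d_k+1}{2^k}\big)_k$ decreases, so both sandwich $\exp_{\mathcal M}(b_n/c_n)\in K_{\mathcal M}$ at the $n$-th step. Second, a \emph{continuity estimate}: for $q,q'\in F(\mathcal M)$ with $|q-q'|\leqslant 1/k$ and $k\in M^{>0}$,
\[
    |\exp_{\mathcal M}(q)-\exp_{\mathcal M}(q')|\;\leqslant\;\exp_{\mathcal M}(\max(q,q'))\cdot\frac{1}{k},
\]
obtained by factoring $\exp_{\mathcal M}(q)-\exp_{\mathcal M}(q')=\exp_{\mathcal M}(\min(q,q'))\bigl(\exp_{\mathcal M}(|q-q'|)-1\bigr)$ via \cref{prop_exp_is_monotone_homomorphism_from_F(M)} and applying \cref{lemma_exp(1/n) < 1 + 1/n}.

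For (a): since $(a_n)$ is Cauchy it is bounded by some $L\in M^+$, hence $\exp_{\mathcal M}(a_n)\leqslant\exp_{\mathcal M}(L)\in K_{\mathcal M}$, and, taking the integer part (recall $\mathcal M\subseteq^{IP} K_{\mathcal M}$), dominated by some $C\in M^+$. Given $k'\in M^{>0}$, I would set $k:=4Ck'$, use the Cauchy property to find $N_1$ with $|a_n-a_m|\leqslant 1/k$ for $n,m>N_1$, and enlarge to $N\geqslant N_1$ with $2^N>4k'$ (possible by (T11), which gives $2^N\geqslant 1+N$). Combining the two ingredients above with the triangle inequality yields, for $n,m>N$,
\[
    \Big|\frac{d_n}{2^n}-\frac{d_m}{2^m}\Big|\;\leqslant\;\frac{1}{2^n}+\frac{1}{2^m}+\frac{C}{k}\;<\;\frac{1}{k'}.
\]
This chain of inequalities lives a priori in $K_{\mathcal M}$, but its endpoints are in $F(\mathcal M)$, and since $F(\mathcal M)$ is an ordered subfield of $K_{\mathcal M}$, the comparison descends. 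For (b), if $(b_n/c_n)$ and $(b'_n/c'_n)$ both represent $a$, then $|a_n-a'_n|$ is eventually arbitrarily small, so the same three-term bound applied to $|d_n/2^n-d'_n/2^n|$ (without needing to vary $m$) shows the constructed sequences are equivalent.

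I expect the only delicate point to be the bookkeeping between $F(\mathcal M)$ and $K_{\mathcal M}$: the continuity estimate naturally lives in $K_{\mathcal M}$ because $\exp_{\mathcal M}(q)\in K_{\mathcal M}\setminus F(\mathcal M)$ in general, whereas the Cauchy condition is formulated with bounds $1/k$ for $k\in M^{>0}$. The integer-part step that replaces $\exp_{\mathcal M}(L)$ by $C\in M^+$ is precisely what bridges the two structures; no individual estimate is hard, but mixing them carelessly could hide a genuine issue.
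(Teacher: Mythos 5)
Your plan is correct, and for the Cauchy part it follows the paper's argument essentially exactly — the same sandwich bound $\frac{d_n}{2^n}\leqslant\exp_{\mathcal M}(a_n)\leqslant\frac{d_n+1}{2^n}$, the same continuity estimate via the homomorphism property of $\exp_{\mathcal M}$ on $F(\mathcal M)$ together with $\exp_{\mathcal M}(1/L)\leqslant 1+1/L$, and the same three-term triangle inequality (only the choice of constants differs). Where you genuinely depart from the paper is in showing independence of the representative. The paper first reduces both representatives to a common denominator sequence $(c_n)$, splits into cases by the eventual sign of $u_n-d_n$, and in the nontrivial case passes through $\bigl(\tfrac{u_n}{2^n}\bigr)^{c_n}\leqslant\exp_{\mathcal M}(c_n/k)\bigl(\tfrac{d_n+1}{2^n}\bigr)^{c_n}$, extracting $c_n$-th roots via \cref{lemma_mth_root_respect_order}. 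You instead re-run the Cauchy three-term estimate with $\exp_{\mathcal M}(a'_n)$ in place of $\exp_{\mathcal M}(a_m)$, which avoids the common-denominator reduction, the case split, and the root lemma entirely; this is a genuine simplification, possible because the continuity estimate compares $\exp_{\mathcal M}(a_n)$ and $\exp_{\mathcal M}(a'_n)$ directly without any constraint on the fractional representations. Two small remarks: you should say explicitly that initial terms of the representative sequences are discarded so that $a_n,a'_n>0$ throughout, since otherwise $B(n,b_n,c_n)$ and the sandwich bound do not apply; and the integer-part step replacing $\exp_{\mathcal M}(L)$ by $C$ is harmless but unnecessary, since for $L\in M^+$ one already has $\exp_{\mathcal M}(L)=2^L\in M^+$. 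Items (c) and (d) of your plan are not spelled out in the paper, but both are correct and worth recording.
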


\begin{proof}
    Let $a = (a_n) = (\frac{b_n}{c_n}) \in K_{\mathcal{M}}, a > 0$, $d_n := \max B(n, b_n, c_n)$. First, we prove that $(\frac{d_n}{2^n})$ is an $\mathcal M$-Cauchy sequence (then $\lim\limits_{n \to \infty} \frac{d_n}{2^n} = (\frac{d_n}{2^n})$).
    
    Fix $k \in {M}^{>0}$. Choose $L \in M^+$ such that $L > 3\exp_2([a] + 1)k$. Then choose an $N \in M^+$ such that $N > L$ and, for all $m \geqslant n > N$, $|a_n - a_m| < \frac{1}{L}$ and $a_n < [a] + 1$.
    
    Fix arbitrary $m \geqslant n > N$. Without loss of generality $a_n \leqslant a_m$, thereforce 
    $$|\exp_2(a_m) - \exp_2(a_n)| = \exp_2(a_m) - \exp_2(a_n) < \exp_2\big(a_n + \frac{1}{L}\big) - \exp_2(a_n) = $$ $$ = \exp_2(a_n)\Big(\exp_2\big(\frac{1}{L}\big) - 1\Big) < \exp_2([a] + 1)\Big(\exp_2(\frac{1}{L}) - 1\Big).$$
    By \cref{lemma_exp(1/n) < 1 + 1/n} $\exp_2(\frac{1}{L}) \leqslant 1 + \frac{1}{L}$, therefore 
    $$|\exp_2(a_m) - \exp_2(a_n)| < \exp_2([a] + 1)\frac{1}{L} < \frac{1}{3 k}.$$ Now
    $$\left|\frac{d_n}{2^n} - \frac{d_m}{2^m}\right| \leqslant \left|\frac{d_n}{2^n} - \exp_2(a_n)\right| + \Big|\exp_2(a_n) - \exp_2(a_m)\Big| + \left|\exp_2(a_m) - \frac{d_m}{2^m}\right| < $$ 
    $$ < \frac{1}{2^n} + \frac{1}{3 k} + \frac{1}{2^m} < \frac{1}{k}.$$
    This means that $(\frac{d_n}{2^n})$ is an $\mathcal{M}$-Cauchy sequence. 
    
    Now we prove that the result does not depend on the choice of the sequence from the equivalence class. Let $(r_n) = \Big(\frac{s_n}{c_n}\Big) \sim(a_n)$ (we can assume that the denominators in both sequences are the same, since $\max B(n, b_n, c_n) = \max B(n, l b_n, l c_n)$ for any $l \in {M}^{>0}$). Let $u_n = \max B(n, s_n, c_n)$, $D = \lim\limits_{n \to \infty}\frac{d_n}{2^n}$, $U = \lim\limits_{n \to \infty}\frac{u_n}{2^n}$. We need to prove that $D = U$. 
    
    If for any $N \in {M}^+$ there are $n, m > N$ such that $u_n \leqslant d_n$ and $d_m \leqslant u_m$, then $U \leqslant D \leqslant U$ and $U = D$.
    
    Now consider the case when $\exists N \in {M}^+$ such that $\forall n > N \: d_n < u_n$ (the case of the opposite inequality is similar). In this case, $a_n < r_n$ for all $n > N$.
    
    Fix a $k \in {M}^{>0}$. Find an $N' \geqslant N$ such that for all $n > N'$, $0 < r_n - a_n < \frac{1}{k}$. Then, for all $n > N'$, $0 < s_n - b_n < \frac{c_n}{k}$ and
    $$\Big(\frac{u_n}{2^n}\Big)^{c_n} \leqslant2^{s_n} < \exp_2(b_n + \frac{c_n}{k}) = \exp_2(b_n)\exp_2(\frac{c_n}{k}) \leqslant $$ $$ \leqslant \exp_2(\frac{c_n}{k}) \Big(\frac{d_n + 1}{2^n}\Big)^{c_n}.$$
    By \cref{lemma_mth_root_respect_order}, $\frac{u_n}{2^n} \leqslant\exp_2(\frac{1}{k}) \frac{d_n+1}{2^n}$. Hence, using \cref{lemma_exp(1/n) < 1 + 1/n}, 
    $$\frac{u_n}{2^n} \leqslant \exp_2\Big(\frac{1}{k}\Big) \frac{d_n + 1}{2^n} \leqslant \Big(1 + \frac{1}{k}\Big) \frac{d_n + 1}{2^n},$$
    and
    $$\left|\frac{u_n}{2^n} - \frac{d_n}{2^n}\right| = \frac{u_n}{2^n} - \frac{d_n}{2^n} < \Big(1 + \frac{1}{k}\Big)\frac{d_n + 1}{2^n} - \frac{d_n}{2^n} = \frac{1}{2^n} + \frac{d_n + 1}{2^n k}.$$
    So, since $\Big(\frac{d_n + 1}{2^n}\Big)$ is bounded, $\Big(\frac{d_n}{2^n}\Big) \sim \Big(\frac{u_n}{2^n}\Big)$, and $D = U$.
\end{proof}

\begin{lemma}\label{lemma_exp(lim a_n) = lim exp(a_n)}
    Let $a = (a_n) \in K_{\mathcal{M}}$. Then $\lim\limits_{n \to \infty} \exp_2(a_n) = \exp_2(a)$.
\end{lemma}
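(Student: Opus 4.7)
The plan is to apply the triangle inequality in $K_{\mathcal M}$, playing off the canonical representative sequences used to define $\exp_{\mathcal M}$ on both $F(\mathcal M)$ and on all of $K_{\mathcal M}$. After reducing via $\exp_{\mathcal M}(-a) = (\exp_{\mathcal M}(a))^{-1}$ to the case $a > 0$ (so that $a_m > 0$ eventually), fix a representation $a_m = b_m/c_m$, set $d_m := \max B(m, b_m, c_m)$, so that $\exp_{\mathcal M}(a) = [(d_m/2^m)_m]$. For each fixed $n$, the element $\exp_{\mathcal M}(a_n) \in K_{\mathcal M}$ is represented by $(e_m^{(n)}/2^m)_m$ with $e_m^{(n)} := \max B(m, b_n, c_n)$, and the crucial observation is the diagonal identity $e_n^{(n)} = d_n$, which holds by the very definition of $d_n$.

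First I show that $|\exp_{\mathcal M}(a_n) - d_n/2^n|_{K_{\mathcal M}} \leqslant 1/2^n$. By the preceding monotonicity lemma, $(e_m^{(n)}/2^m)_m$ is nondecreasing while $((e_m^{(n)}+1)/2^m)_m$ is nonincreasing; hence for every $m \geqslant n$,
$$\frac{d_n}{2^n} \;=\; \frac{e_n^{(n)}}{2^n} \;\leqslant\; \frac{e_m^{(n)}}{2^m} \;<\; \frac{e_n^{(n)}+1}{2^n} \;=\; \frac{d_n}{2^n} + \frac{1}{2^n},$$
so the representative sequence of the difference $\exp_{\mathcal M}(a_n) - d_n/2^n$ is bounded in absolute value by $1/2^n$ from index $n$ onwards, which yields the claim. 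Secondly, the Cauchy sequence $(d_m/2^m)_m$, viewed as a sequence of constants in $K_{\mathcal M}$ via the embedding $F(\mathcal M) \hookrightarrow K_{\mathcal M}$, tends to its own equivalence class $\exp_{\mathcal M}(a)$; this is an immediate unwinding of the definitions of the order on $K_{\mathcal M}$ and of limits in it.

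Combining the two ingredients, the triangle inequality gives
$$|\exp_{\mathcal M}(a_n) - \exp_{\mathcal M}(a)|_{K_{\mathcal M}} \;\leqslant\; |\exp_{\mathcal M}(a_n) - d_n/2^n| + |d_n/2^n - \exp_{\mathcal M}(a)| \;\leqslant\; \frac{1}{2^n} + |d_n/2^n - \exp_{\mathcal M}(a)|,$$
and both summands tend to $0$ as $n \to \infty$: the first by the Bernoulli inequality (T11), which forces $2^n > n$, and the second by the previous paragraph. The main obstacle is really just bookkeeping: one must carefully track the two indices $m$ (inside each representative sequence) and $n$ (indexing the outer sequence whose limit is being taken), and then exploit the diagonal identity $e_n^{(n)} = d_n$. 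This is precisely what allows the first step to go through without any need to extend the monotonicity of $\exp_{\mathcal M}$ from $F(\mathcal M)$ to all of $K_{\mathcal M}$.
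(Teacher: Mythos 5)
Your proof is correct and follows essentially the same route as the paper's: reduce to $a>0$, bound $|\exp_{\mathcal M}(a_n) - d_n/2^n|$ by $1/2^n$, observe that $(d_n/2^n)$ tends to its own equivalence class $\exp_{\mathcal M}(a)$, and conclude by the triangle inequality. The only difference is that you spell out the justification for the bound $|\exp_{\mathcal M}(a_n) - d_n/2^n| \leqslant 1/2^n$ (via the monotonicity lemma and the diagonal identity $e_n^{(n)} = d_n$), which the paper takes as given.
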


\begin{proof}
    It is enough to consider the case of $a > 0$. Fix $k \in {M}^{>0}$, $d_n := \max B(n, b_n, c_n)$, where $a_n = \frac{b_n}{c_n}$. There exists some $N \in {M}^+$ such that for all $n > N$ hold $|\frac{d_n}{2^n} - \exp_2(a)| < \frac{1}{2 k}$ and $\frac{1}{2^n} < \frac{1}{2 k}$. Then for all $n > N$ 
    $$|\exp_2(a_n) - \exp_2(a)| \leqslant \Big|\exp_2(a_n) - \frac{d_n}{2^n}\Big| + \Big|\frac{d_n}{2^n} - \exp_2(a)\Big| < \frac{1}{2^n} + \frac{1}{2 k} < \frac{1}{k}.$$
\end{proof}

\begin{theorem}
    $(\mathcal K_{\mathcal{M}}, \exp_2)$ is an exponential field.
\end{theorem}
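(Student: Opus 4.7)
The plan is to verify the three defining clauses of ``exponential field'': that $\exp_{\mathcal M}$ is a group homomorphism $(K_{\mathcal M}, +) \to (K_{\mathcal M}^{>0}, \cdot)$, that it is strictly order-preserving (which gives injectivity), and that it is surjective onto $K_{\mathcal M}^{>0}$. \cref{prop_exp_is_monotone_homomorphism_from_F(M)} already supplies the analogous statements on $F(\mathcal M)$, and \cref{lemma_exp(lim a_n) = lim exp(a_n)} gives $\lim_n \exp_{\mathcal M}(a_n) = \exp_{\mathcal M}(a)$ for $a = (a_n) \in K_{\mathcal M}$, so the homomorphism and monotonicity parts should fall out by passage to the limit; the genuine content lies in surjectivity.

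For the homomorphism, given $a = (a_n)$ and $b = (b_n)$ in $K_{\mathcal M}$, \cref{prop_exp_is_monotone_homomorphism_from_F(M)} yields $\exp_{\mathcal M}(a_n + b_n) = \exp_{\mathcal M}(a_n) \cdot \exp_{\mathcal M}(b_n)$ termwise in $K_{\mathcal M}$, and taking the limit via \cref{lemma_exp(lim a_n) = lim exp(a_n)} together with the continuity of multiplication in the ordered field $\mathcal K_{\mathcal M}$ (products of bounded $\mathcal M$-Cauchy sequences are $\mathcal M$-Cauchy) produces $\exp_{\mathcal M}(a + b) = \exp_{\mathcal M}(a) \cdot \exp_{\mathcal M}(b)$; negative arguments reduce by the definition $\exp_{\mathcal M}(-a) = \exp_{\mathcal M}(a)^{-1}$. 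For strict monotonicity it suffices to prove $\exp_{\mathcal M}(a) > 1$ whenever $a > 0$, since the homomorphism property then upgrades this to $\exp_{\mathcal M}(a) > \exp_{\mathcal M}(b)$ from $a > b$. If $a > 0$ in $K_{\mathcal M}$, pick $k \in M^{>0}$ with $a_n \geqslant 1/k$ eventually; the argument of \cref{lemma_mth_root_respect_order} gives $\exp_{\mathcal M}(1/k) > 1$ from $1^k = 1 < 2 = \exp_2(1)$, and monotonicity on $F(\mathcal M)$ combined with \cref{lemma_exp(lim a_n) = lim exp(a_n)} forces $\exp_{\mathcal M}(a) \geqslant \exp_{\mathcal M}(1/k) > 1$.

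The main obstacle is surjectivity onto $K_{\mathcal M}^{>0}$, i.e.\ the construction of a logarithm. Given $c \in K_{\mathcal M}^{>0}$, reduce to the case $c \geqslant 1$ by passing to $c^{-1}$ if necessary, and fix a representative $c = (p_n/q_n)$ with $p_n \geqslant q_n > 0$ in $M$. For each $n \in M^{>0}$ define
\[
k_n := \max\{ k \in M^+ : 2^k \cdot q_n^{2^n} \leqslant p_n^{2^n}\}.
\]
This is well-defined: the set is nonempty (it contains $0$) and bounded above (via axiom (T9) applied to $p_n^{2^n}$), and being quantifier-free definable in $\mathcal L_{Ar}(x^y)$ it has a maximum by $\mathsf{IOpen(x^y)}$. \cref{lemma_mth_root_respect_order} (with $b = k_n$, $m = 2^n$) then translates the defining inequality into $\exp_{\mathcal M}(k_n/2^n) \leqslant p_n/q_n < \exp_{\mathcal M}((k_n + 1)/2^n)$.

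Setting $a_n := k_n / 2^n$, the remaining claim is that $(a_n)$ is an $\mathcal M$-Cauchy sequence and that $\exp_{\mathcal M}((a_n)) = c$. The Cauchy property is the delicate step: by \cref{lemma_exp(1/n) < 1 + 1/n} the gap $\exp_{\mathcal M}((k_n + 1)/2^n) - \exp_{\mathcal M}(k_n / 2^n)$ is at most $\exp_{\mathcal M}(k_n/2^n) \cdot (1/2^n)$, which, combined with the Cauchy property of $(p_n/q_n)$ and strict monotonicity of $\exp_{\mathcal M}$ on $F(\mathcal M)$, lets one bound $|a_m - a_n|$ uniformly in terms of these gaps. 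The equality $\exp_{\mathcal M}((a_n)) = c$ then follows by squeezing together with \cref{lemma_exp(lim a_n) = lim exp(a_n)}. The principal difficulty of the whole proof sits in this last paragraph, in transporting the approximate inequalities in $F(\mathcal M)$ to a genuine Cauchy statement in $K_{\mathcal M}$.
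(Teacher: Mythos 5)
Your proposal follows essentially the same three-part decomposition as the paper's own proof: the homomorphism and monotonicity properties are obtained termwise on $\mathcal F(\mathcal M)$ via \cref{prop_exp_is_monotone_homomorphism_from_F(M)} and passed to $K_{\mathcal M}$ using \cref{lemma_exp(lim a_n) = lim exp(a_n)}, strict monotonicity reduces to $\exp_{\mathcal M}(1/k) > 1$ via \cref{lemma_mth_root_respect_order}, and surjectivity proceeds by dyadic approximation of the logarithm exactly as in the paper (your $k_n$ coincides with the paper's $d_n$).

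Two small flags. First, you justify the existence of $k_n$ by appealing to $\mathsf{IOpen(x^y)}$, but the construction of $\mathcal K_{\mathcal M}$ is carried out under the weaker hypothesis $(\mathcal M^+, x^y) \vDash \mathsf{IOpen} + \mathsf{T_{x^y}}$, and the formula $2^k q_n^{2^n} \leqslant p_n^{2^n}$ involves $x^y$, so base-language $\mathsf{IOpen}$ does not apply to it; the paper sidesteps this by invoking (T9) directly on $[a_n^{2^n}] \in M^+$, which already produces the required $d_n$ without any induction in the extended language. Second, the $\mathcal M$-Cauchyness of $(k_n/2^n)$ is exactly where the paper does the real work: it is an explicit proof by contradiction (negate Cauchyness, fix $k_0$, choose $k_1$ from the positivity of $(1 - 1/\exp_{\mathcal M}(1/k_0))[a]$, then derive a contradiction via a long chain of inequalities). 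Your one-sentence summary — that the bound on the $\exp$-gaps together with monotonicity ``lets one bound $|a_m - a_n|$'' — points in the right direction but leaves the actual argument unwritten; as stated it is a claim, not a proof, and this is precisely the step that carries the theorem.
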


\begin{proof}
    First, we prove that $\exp_2$ is an order-preserving embedding from $(K_\mathcal M, +)$ to $(K_\mathcal M^{>0}, \cdot)$. Clearly, for all $a \in K_\mathcal M$, $\exp_2 (a) > 0$. Let $a, b \in K_{\mathcal{M}}, a = (a_n), b = (b_n)$. Then $\exp_2(a_n +b_n) = \exp_2(a_n) \exp_2(b_n)$ by \cref{prop_exp_is_monotone_homomorphism_from_F(M)}. By \cref{lemma_exp(lim a_n) = lim exp(a_n)} $$\exp_2(a + b) = \lim\limits_{n \to \infty} \exp_2(a_n + b_n) = \lim\limits_{n \to \infty} \exp_2(a_n) \exp_2(b_n) = \exp_2(a)\exp_2(b).$$
    
    Let $a =(a_n), b =(b_n) \in K_{\mathcal{M}}, a <b$. There exist $k, N\in {M}^{>0}$ such that for all $n > N$, $a_n +\frac{1}{k} < b_n$. Then,
    $$\exp_2(a) = \lim\limits_{n \to \infty} \exp_2(a_n) < \exp_2\big(\frac{1}{k}\big) \lim\limits_{n \to \infty} \exp_2 (a_n) = \lim\limits_{n \to \infty} \exp_2\big(a_n + \frac{1}{k}\big) \leqslant$$
    $$\leqslant \lim\limits_{n \to \infty} \exp_2(b_n) = \exp_2(b).$$
    
    It remains to prove that $\exp_2$ is onto. 
    
    Let $a = (a_n) \in K_{\mathcal{M}}^{>1}$, $n \in {M}^+$, this easily implies the case of $0 < a < 1$. Since $a < [a] + 1$, we may assume that for all $n \in M^+$, $a_n < [a] + 1$.  By (T9) there is $d_n \in {M}^+$ such that $2^{d_n} \leqslant [a_n^{2^n}] < 2^{d_n + 1}$. For such a $d_n$ we have $2^{d_n} \leqslant a_n^{2^n} < 2^{d_n + 1}$. By \cref{lemma_mth_root_respect_order}, $\exp_2(\frac{d_n}{2^n}) \leqslant a_n <  \exp_2(\frac{d_n + 1}{2^n})$. Also $2^{d_n} \leqslant a_n^{2^n} < 2^{a_n 2^n}$, so $ d_n \leqslant a_n 2^n$ and, hence, $\frac{d_n}{2^n} < [a] + 1$. Then 
    $$0 < \exp_2\big(\frac{d_n + 1}{2^n}\big) - \exp_2\big(\frac{d_n}{2^n}\big) = \exp_2\big(\frac{d_n}{2^n}\big)\Big(\exp_2\big(\frac{1}{2^n}\big) - 1\Big) \leqslant \frac{2^{[a] + 1}}{2^n}$$ (the latter inequality is implied by \cref{lemma_exp(1/n) < 1 + 1/n}).
    
    Suppose that $\big(\frac{d_n}{2^n}\big)$ is not an $\mathcal{M}$-Cauchy sequence. Then there is $k_0 \in {M}^{>0}$ such that for all $N \in {M}^+$, there are $n, m > N$ such that 
    $$\frac{d_{n}}{2^{n}} - \frac{d_{m}}{2^{m}} \geqslant \frac{1}{k_0}.$$ Choose $k_1$ such that 
    $$\Big(1 - \frac{1}{\exp_2(\frac{1}{k_0})}\Big)[a] > \frac{1}{k_1}$$
    (such a $k_1$ exists since the lhs of the inequality is positive). Let us fix $N$ such that $\frac{2^{[a] + 1}}{2^N} < \frac{1}{k_1}$ and for all $n, m > N$, $|a_n - a_m| < \frac{1}{2 k_1}$. Choose $n, m > N$ such that $\frac{d_n}{2^n} - \frac{d_m}{2^m} \geqslant \frac{1}{k_0}$ (such exist by our assumption). Then we have the following inequalities:
    \begin{align*}
        \exp_2\left(\frac{d_m}{2^m}\right) &\leqslant \frac{\exp_2\left(\frac{d_n}{2^n}\right)}{\exp_2\left(\frac{1}{k_0}\right)} \leqslant \frac{a_n}{\exp_2\left(\frac{1}{k_0}\right)}  \\
        &= a_n - a_n\left(1 - \frac{1}{\exp_2\left(\frac{1}{k_0}\right)}\right)  \\
        &\leqslant a_n - [a]\left(1 - \frac{1}{\exp_2\left(\frac{1}{k_0}\right)}\right)  \\
        &< a_n - \frac{1}{k_1} < a_m + \frac{1}{2k_1} - \frac{1}{k_1} 
        \\
        &= a_m - \frac{1}{2k_1} \leqslant a_m - \frac{2^{[a] + 1}}{2^m} \\
        &< \exp_2\left(\frac{d_m + 1}{2^m}\right) - \frac{\exp_2([a] + 1)}{2^m} \\
        &< \exp_2\left(\frac{d_m}{2^m}\right),
    \end{align*} so, we have got a contradiction. Hence, $(\frac{d_n}{2^n})$ is an $\mathcal{M}$-Cauchy sequence. 
    
    Since $\exp_2(\frac{d_n}{2^n}) \leqslant a_n < \exp_2(\frac{d_n}{2^n}) + \frac{2^{[a] + 1}}{2^n}$, $\lim\limits_{n \to \infty}\exp_2(\frac{d_n}{2^n}) = a$. By \cref{lemma_exp(lim a_n) = lim exp(a_n)}, $\exp_2\Big((\frac{d_n}{2^n})\Big) = a$.
\end{proof}

\begin{proposition}\label{prop_operations_are_continuous}
    $+, \cdot, \exp_2, \log_2$ are continuous (see remark after \cref{lemma_about_models_of_MaxVal(exp)} for the definition of continuity).
\end{proposition}

\begin{proof}
    Proofs of continuity $+$ and $\cdot$ are trivial (it is a general fact for ordered fields). $\exp_2$ is continuous by \cite[Proposition 2.11]{krapp:2019}. $\log_2$ is strictly increasing and surjective, hence, a preimage of any open interval is an open interval. This implies continuity immediately.  
\end{proof}

\begin{lemma}\label{lemma_(1+1/n)^n_is_incr}
    The sequence $\Big((1 + \frac{1}{2^n})^{2^n}\Big)$ is increasing.
\end{lemma}

\begin{proof}
    Let $m, n \in M^+, m > n$. Then we have 
    $$\frac{(1 + \frac{1}{2^m})^{2^m}}{(1 + \frac{1}{2^n})^{2^n}} = \left(\frac{(1 + \frac{1}{2^m})^{2^{m - n}}}{1 + \frac{1}{2^n}} \right)^{2^n} \geqslant \left(\frac{1 + \frac{1}{2^n}}{1 + \frac{1}{2^n}} \right)^{2^n} = 1,$$
    where the last inequality follows from the Bernoulli inequality.
\end{proof}

\begin{proposition}
    The sequence $\Big(\big(1 + \frac{1}{2^n}\big)^{2^n}\Big)$ is an $\mathcal M$-Cauchy sequence. We will denote this sequence by $e_\mathcal M$.
\end{proposition}

\begin{proof}
    First we argue that $\big(1 + \frac{1}{2^n}\big)^{2^n}$ is bounded. Indeed, for $n > 0$, by the Bernoulli inequality we have 
    $$\left(\frac{1}{1+\frac{1}{2^n}}\right)^{2^{n - 1}} = \left(\frac{2^n}{2^n + 1 }\right)^{2^{n - 1}} = \left(1 - \frac{1}{2^n + 1 }\right)^{2^{n - 1}} \geqslant 1 - \frac{2^{n - 1}}{2^n + 1} \geqslant \frac{1}{2}$$
    and, hence, $\big(1 + \frac{1}{2^n}\big)^{2^n} \leqslant 4$.

    Next, for all $m, n \in M^{+}$, $n  < m$, we have
    $$\frac{(1 + \frac{1}{2^m})^{2^m}}{(1 + \frac{1}{2^n})^{2^n}} = \left(\frac{(1 + \frac{1}{2^m})^{2^{m - n}}}{1 + \frac{1}{2^n}} \right)^{2^n} = \left(\frac{1}{(1 + \frac{1}{2^n})(\frac{1}{1 + \frac{1}{2^m}})^{2^{m - n}}} \right)^{2^n} = \left(\frac{1}{(1 + \frac{1}{2^n})(1 - \frac{1}{2^m + 1})^{2^{m - n}}} \right)^{2^n} \leqslant$$
    $$\leqslant \left(\frac{1}{(1 + \frac{1}{2^n})(1 - \frac{2^{m - n}}{2^m + 1})} \right)^{2^n} = \frac{1}{\big(1 + \frac{1}{2^n} - \frac{2^{m - n}}{2^m + 1} - \frac{2^{m - 2n}}{2^m + 1}\big)^{2^n}} \leqslant$$
    $$\leqslant \frac{1}{1 + 1 - \frac{2^m}{2^m + 1} - \frac{2^{m - n}}{2^m + 1}} = \frac{1}{1 + \frac{1 - 2^{m - n}}{2^m + 1}} < \frac{1}{1 - \frac{1}{2^n}} = 1 + \frac{1}{2^n - 1},$$
    where all non-trivial inequalities follow from the Bernoulli inequality.

    Now we are ready to prove that $\Big(\big(1 + \frac{1}{2^n}\big)^{2^n}\Big)$ is an $\mathcal M$-Cauchy sequence. Consider $n, m \in M^+, m > n$. Then we have 
    $$\left|\big(1 + \frac{1}{2^m}\big)^{2^m} - \big(1 + \frac{1}{2^n}\big)^{2^n}\right| = \big(1 + \frac{1}{2^m}\big)^{2^m} - \big(1 + \frac{1}{2^n}\big)^{2^n} = \big(1 + \frac{1}{2^n}\big)^{2^n} \left(\frac{\big(1 + \frac{1}{2^m}\big)^{2^m}}{\big(1 + \frac{1}{2^n}\big)^{2^n}} - 1 \right) < \frac{4}{2^n - 1},$$
    and this implies the desired.
\end{proof}

Let us define $\exp_\mathcal M$ as 
$$\exp_\mathcal M (a) := \exp_2(a \log_2 e_\mathcal M) = e_{\mathcal M}^a.$$ Clearly, $(\mathcal K_\mathcal M, \exp_\mathcal M)$ is an exponential field. We will denote by $\ln_\mathcal M$ the inverse to $\exp_\mathcal M$.

\begin{proposition}\label{prop_exp_M_is_good}
    $\exp_\mathcal M(a) \geqslant 1 + a$ for all $a \in K_\mathcal M$.
\end{proposition}

\begin{proof}
    Fix $a > 0$. First observe that for $n \in M^+$ we have 
    \begin{align*}
        \big(1 + \frac{1}{2^n}\big)^{a 2^n} \geqslant 1 + a - \frac{1}{2^n} && (*)
    \end{align*} 
    and \begin{align*}
        \big(1 - \frac{1}{2^n + 1}\big)^{a 2^n} \geqslant 1 - \frac{a + \frac{1}{2^n}}{1 + \frac{1}{2^n}}. && (**)
    \end{align*} 
    Indeed, let $m := [2^n a]$, then $\frac{m}{2^n} \leqslant a < \frac{m}{2^n} + \frac{1}{2^n}$. So, 
    $$\big(1 + \frac{1}{2^n}\big)^{a 2^n} \geqslant \big(1 + \frac{1}{2^n}\big)^{m} \geqslant 1 + \frac{m}{2^n} > 1 + a - \frac{1}{2^n}$$
    and $$\big(1 - \frac{1}{2^n + 1}\big)^{a 2^n} > \big(1 - \frac{1}{2^n + 1}\big)^{m + 1} \geqslant 1 - \frac{m + 1}{2^n + 1} \geqslant 1 - \frac{a + \frac{1}{2^n}}{1 + \frac{1}{2^n}}.$$
    
    Hence, we have
    \begin{align*}
        \exp_\mathcal M(a) & = \\
        \exp_2(a \log_2 e_\mathcal M) & = & \text{(by \cref{prop_operations_are_continuous})}\\
        \lim\limits_{n \to \infty} \exp_2(a \log_2 \big(1 + \frac{1}{2^n}\big)^{2^n}) & = \\
        \lim\limits_{n \to \infty} \big(1 + \frac{1}{2^n}\big)^{a 2^n} & \geqslant & \text{(by observation }(*))\\
        \lim\limits_{n \to \infty} \big(1 + a - \frac{1}{2^n}\big) & = \\
        1 + a
    \end{align*}
    and
    \begin{align*}
        \exp_\mathcal M(-a) & = \\
        {\exp_2(-a \log_2 e_\mathcal M)} & = & \text{(by \cref{prop_operations_are_continuous})}\\
        \lim\limits_{n \to \infty} \exp_2\big(-a \log_2 (1 + \frac{1}{2^n})^{2^n}\big) & = \\
        \lim\limits_{n \to \infty} \big(1 + \frac{1}{2^n}\big)^{-a 2^n} & = \\
        \lim\limits_{n \to \infty} \big(\frac{1}{1+\frac{1}{2^n}}\big)^{a 2^n} & = \\
        \lim\limits_{n \to \infty} \big(1 - \frac{1}{2^n + 1}\big)^{a 2^n} & \geqslant & \text{(by observation }(**))\\
        \lim\limits_{n \to \infty} \big(1 - \frac{a + \frac{1}{2^n}}{1 + \frac{1}{2^n}}\big) & = \\
        1 - a.
    \end{align*}
\end{proof}

\begin{proposition}\label{prop_for_a^b}
    $\exp_\mathcal M (b \ln_\mathcal M a) = \exp_2(b \log_2 a)$ for all $a, b \in K_\mathcal M, a > 0$.
\end{proposition}

\begin{proof}
    Let us fix $a, b \in K_\mathcal M, a > 0$. It is easy to see that  $\log_2 a = \ln_\mathcal M a \log_2 e_\mathcal M$. Hence, we have 
    $$\exp_\mathcal M (b \ln_\mathcal M a) = \exp_2(b \ln_\mathcal M a \log_2 e_\mathcal M) = \exp_2(b \log_2 a).$$
\end{proof}

\begin{proposition}\label{prop_M_x^y_ip_K_M}
    For all $m_1, m_2 \in M^{>0}$ we have $m_2^{m_1} = \exp_\mathcal M (m_1 \ln_\mathcal M(m_2))$ (and, by \cref{prop_M_ip_K_M}, $\mathcal M \subseteq^{\IP}_{x^y} (\mathcal K_{\mathcal M}, \exp_\mathcal M)$).
\end{proposition}

\begin{proof}
    By \cref{prop_for_a^b} it is enough to prove that $m_2^{m_1} = \exp_2 (m_1 \log_2 (m_2))$. 
    
    Fix $m_1, m_2 \in M^{>0}$. Choose an $\mathcal M$-Cauchy sequence $(\frac{b_n}{c_n})$ such that $\big(\frac{b_n}{c_n}\big) = \log_2(m_2)$ and $\frac{b_n}{c_n} \leqslant \log_2(m_2)$ for all $n \in M^+$ (clearly, this is possible). Then, by several applications of \cref{lemma_mth_root_respect_order} we have 
    \begin{align*}
        \frac{b_n}{c_n} \leqslant \log_2(m_2) & \iff \\
        \exp_2\Big(\frac{b_n}{c_n}\Big) \leqslant m_2 & \iff \\
        2^{m_1 b_n} \leqslant m_2^{m_1 c_n} & \iff \\
        \exp_2\Big(\frac{m_1 b_n}{c_n}\Big) \leqslant m_2^{m_1}
    \end{align*}
    and, hence, by \cref{prop_operations_are_continuous}, $\exp_2(m_1 \log_2(m_2)) \leqslant m_2^{m_1}$. Similarly one can show that the opposite inequality holds. So, $m_2^{m_1} = \exp_2(m_1 \log_2(m_2))$.
\end{proof}

Now \cref{prop_M_x^y_ip_K_M}, \cref{prop_exp_M_is_good} and \cref{corollary_K_M_is_rcf} imply \cref{proposition_exp_rcf_from_M}. Finally, \cref{proposition_ip_of_exp_rcf} and \cref{proposition_exp_rcf_from_M} combine the following result:

\begin{theorem}\label{theorem_(M,x^y)_is_iopen+T_x^y<=>ip_of_ExpField_RCF+Bern)}
    Let $\mathcal{M}$ be a discretely ordered ring and $x^y\colon  M^+ \times M^+\to M^+$. Then $(\mathcal{M}^+, x^y)$ is a model of $\mathsf{IOpen} + \mathsf{T_{x^y}}$ iff there is an exponential field $(\mathcal{R}, \exp)$ such that $\mathcal{M} \subseteq^{\IP}_{x^y} (\mathcal{R}, \exp)$, $(\mathcal{R}, \exp) \vDash \mathsf{ExpField} + \mathsf{RCF} +  \forall x(\exp(x) \geqslant 1 + x)$ and, moreover, the function $x^y$ coincides with a power function induced by $\exp$ (see \cref{def_x^y}).
\end{theorem}

As a trivial consequence, one can obtain a variant of the Bernoulli inequality for rational powers.

\begin{corollary}
    $\mathsf{IOpen + T_{x^y}} \vdash (x > 0 \wedge y > 0 \wedge z \geqslant t > 0) \to \Big( \big(\frac{x}{y}\big)^{\frac{z}{t}} \geqslant 1 + \frac{z}{t}\big(\frac{x}{y} - 1\big)\Big)$.
\end{corollary}

\begin{proof}
    Let $(\mathcal M^+, x^y)$ be a model of $\mathsf{IOpen + T_{x^y}}$ and $\mathcal M \subseteq^{\IP}_{x^y} (\mathcal R, \exp) \vDash \mathsf{ExpField} + \mathsf{RCF} + \forall x (\exp(x) \geqslant 1 + x)$. By \cref{lemma_for_Bernoulli} and remark after it we have $(\mathcal R, \exp) \vDash \forall r > -1 \forall y \geqslant 1 ((1 + r)^y \geqslant 1 + ry)$. Hence, the same holds for $(\mathcal M^+, x^y)$ for ``rational'' parameters.
\end{proof}

\section{Constructing a nonstandard model of $\mathsf{IOpen}(\exp)$ and $\mathsf{IOpen(x^y)}$}\label{section_nonstandard_model}

When constructing a nonstandard model of $\mathsf{IOpen}$, J. Shepherdson considered a real closed field of the form 
$$\{a_p t^{p/q} + a_{p - 1} t^{(p - 1)/q} + \dots + a_0 +a_{-1}t^{-1/q} + \dots \mid a_i\in R\},$$ where the field $\mathcal R$ is real closed. To build a nonstandard model of $\mathsf{IOpen}(\exp)$ and $\mathsf{IOpen(x^y)}$, we will consider a construction generalizing fields of this form. We consider an o-minimal exponential field $\mathbb{R}((t))^{\LE}$, (where LE stands for logarithmic-exponential series), find its exponential integer part $\mathcal{M}$ and apply \autoref{theorem_M_exp_ip_ExpField+MaxVal} and \autoref{theorem_(M, x^y)_ip_R_exp} to establish that $\mathcal{M}^+$ is a model of $\mathsf{IOpen}(\exp)$ and $\mathsf{IOpen(x^y)}$. 

The field $\mathbb{R}((t))^{\LE}$ and definitions used below are introduced in \cite{vandendries_macintyre_marker:2001}. Here we only describe the main steps of the construction. All the proofs can also be found in \cite{vandendries_macintyre_marker:2001}.

\begin{remark}
    In \cite{vandendries_macintyre_marker:2001} the authors use a slightly different terminology: in their paper, an ordered field with a strictly increasing \textit{homomorphism} between the additive group and multiplicative group of positive elements is called an exponential field, an ordered field with a strictly increasing \textit{isomorphism} between the additive group and multiplicative group of positive elements is called a \textit{logarithmic-exponential} field.
\end{remark}

\begin{definition}
    Let $\mathcal K$ be an ordered field, $\mathcal G$ be a multiplicative ordered abelian group. 
    Define $K((\mathcal G))$ as
    $$\{f :G\to K \mid \supp(f) \text{ is conversely well-ordered (i.e. there is the largest element in every nonempty subset)}\},$$ where $\supp(f) := \{g\in G \mid f(g)\neq 0\}$. Elements of $K((\mathcal G))$ will be understood as $\sum\limits_{g\in G} f(g)g$. Also define the structure of an ordered field on $K((\mathcal G))$ as follows:
    \begin{itemize}
        \item $f_1 + f_2$ and $-f$ are defined elementwise;
        \item $f_1 \cdot f_2 := f_3$, where $f_3(g) = \sum\limits_{\substack{g_1, g_2 \in G\\g_1g_2 = g}} f_1(g_1)f_2(g_2)$ (the latter is well-defined since $\supp(f_1)$ and $\supp(f_2)$ are conversely well-ordered);  
        \item $f > 0$ if $\supp(f) \ne \varnothing$ and $f(g_{\max}) > 0$, where $g_{\max} = \max \supp(f)$;
        \item $f_1 > f_2$ if $f_1 - f_2 > 0$. 
    \end{itemize}
\end{definition}

\begin{proposition}[{see, for example, \cite[Chapter VIII, Theorem 10]{fuchs}}]
    $$\mathcal K((\mathcal G)) := (K((\mathcal G)), +, -, \cdot, 0_{\mathcal K}1_\mathcal G, 1_\mathcal K 1_\mathcal G, <)$$ is an ordered field, where $0_{\mathcal K}1_\mathcal G$ and $1_\mathcal K 1_\mathcal G$ are interpretations of 0 and 1 respectively. Moreover, $x \mapsto x 1_\mathcal G$ is an embedding of $\mathcal K$ in $\mathcal K((\mathcal G))$. Here we denote by $x 1_\mathcal G$ for $x \in K$ the function $f\colon  g \mapsto \begin{cases}
        x, \text{if } g = 1_\mathcal G,\\
        0_\mathcal K, \text{if } g \ne 1_\mathcal G.
    \end{cases}$
\end{proposition}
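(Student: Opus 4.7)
The plan is the standard Hahn-series construction, going through four steps: well-definedness of $+$ and $\cdot$, the ring axioms, existence of multiplicative inverses, and compatibility of the order. For addition, $Supp(f_1 + f_2) \subseteq Supp(f_1) \cup Supp(f_2)$, and any nonempty subset of this union has a maximum (take the larger of the maxima of its intersections with the two individually conversely well-ordered sets). For multiplication, I would first prove Neumann's lemma: if $A, B \subseteq G$ are conversely well-ordered, then $AB$ is conversely well-ordered and for each $g \in G$ the set $\{(a,b) \in A \times B : ab = g\}$ is finite. Applied to $A = Supp(f_1), B = Supp(f_2)$, this simultaneously justifies that the defining sum $\sum_{g_1 g_2 = g} f_1(g_1) f_2(g_2)$ is finite and that $Supp(f_1 \cdot f_2) \subseteq AB$ is conversely well-ordered. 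The ring axioms (commutativity, associativity, distributivity, correctness of $0_K 1_{\mathcal{G}}$ and $1_K 1_{\mathcal{G}}$) then follow by direct coefficient computation, with the finiteness afforded by Neumann's lemma justifying rearrangement of sums in the associativity of multiplication.

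For multiplicative inverses, given $f \ne 0$, set $g_0 := \max Supp(f)$ and $c := f(g_0) \ne 0$, and write $f = c \cdot g_0 \cdot (1 - \varepsilon)$ with $\varepsilon := 1_K 1_{\mathcal{G}} - c^{-1} g_0^{-1} f$. By construction $Supp(\varepsilon)$ lies strictly below $1_{\mathcal{G}}$. A further application of Neumann's lemma shows that for every $g \in G$, only finitely many $n$ satisfy $g \in Supp(\varepsilon^n)$ (otherwise an increasing sequence would appear in the product set of $Supp(\varepsilon)$ with itself, contradicting convergent well-orderedness). Hence $\sum_{n \ge 0} \varepsilon^n$ is a well-defined element of $K((\mathcal{G}))$, it is formally the inverse of $1 - \varepsilon$, and so $f^{-1} = c^{-1} g_0^{-1} \sum_{n \ge 0} \varepsilon^n$.

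For the order, $f > 0$ means the leading coefficient $f(\max Supp(f))$ is positive, which gives trichotomy (apply trichotomy in $\mathcal K$ to the leading coefficient of $f_1 - f_2$) and compatibility: the leading term of a sum of positives is positive (the two leading terms cannot cancel since both are positive), and the leading term of a product of positives has coefficient equal to the product of the two positive leading coefficients. The map $x \mapsto x 1_{\mathcal{G}}$ is manifestly additive, multiplicative, unital, injective, and satisfies $x > 0 \Leftrightarrow x 1_{\mathcal{G}} > 0$ by the definition of the order.

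The main obstacle is Neumann's lemma, which is needed both for well-definedness of multiplication and for the convergence of the geometric series in the inverse construction. Once that combinatorial fact about conversely well-ordered subsets of ordered abelian groups is in place, the rest is a formal verification.
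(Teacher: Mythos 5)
The paper itself does not prove this proposition; it states it without argument and delegates all proofs in this section to the cited reference \cite{vandendries_macintyre_marker:2001}, where exactly this Hahn-series argument (well-definedness via Neumann's lemma, formal geometric series for inverses, order via leading coefficient) appears. Your outline is correct and is the standard one; the one place to be slightly more explicit is that the inverse step needs the \emph{infinitary} form of Neumann's lemma (that $\bigcup_{n\ge 1} S^n$ is conversely well-ordered and each $g$ lies in only finitely many $S^n$ when every element of $S$ is $<1_{\mathcal G}$), which is a genuinely stronger statement than the pairwise version used for well-definedness of a single product, though your parenthetical Ramsey-style remark shows you are aware of this.
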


First, we describe the field of exponential series, denoted $\R((t))^E$, which extends the exponential field $\R_{\exp} = (\R, e^x)$. It will be constructed as the union of an increasing sequence of \emph{pre-exponential fields}. 

\begin{definition}
    The quadruple $(\mathcal K, A, B, E)$ is called a \textit{pre-exponential field} if $\mathcal K$ is an ordered field, $A$ is an additive subgroup of $\mathcal K$, $B$ is a convex additive subgroup of $\mathcal K$ (i.e. if $x, y \in B$ and $x < z < y$, then $z \in B$), $A \oplus B = K$, $E$ is an order-preserving embedding from $B$ into the multiplicative group of positive elements of $K$.
\end{definition}

We define a multiplicative ordered abelian group $x^\mathcal \R$ consisting of elements the form $x^r, r \in \R$, with operations defined by $x^r \cdot x^q := x^{r + q}$ and $x^r < x^q :\iff r < q$. Let 
$$A_0 := \{f \in \R((x^\mathcal \R)) \mid \supp(f) > x^0\},$$ 
$$B_0 := \{f \in \R((x^\mathcal \R)) \mid \supp(f) \leqslant x^0\},$$
$$m_0 := \{f \in \R((x^\mathcal \R)) \mid \supp(f) < x^0\}.$$
For $b \in B_0$ there are unique $r \in \R$ and $\varepsilon \in m_0$ such that $b = r + \varepsilon$ (namely, $r = b(x^0)$ and $\varepsilon = b - b(x^0)$). Then let $$E_0(b) := \exp(r)\sum\limits_{n = 0}^{\infty} \frac{\varepsilon^n}{n!}.$$ It is easy to check that the sum $\sum\limits_{n = 0}^{\infty} \frac{\varepsilon^n}{n!}$ is well-defined.

\begin{proposition}[\cite{vandendries_macintyre_marker:2001}]
    $(\mathcal \R((x^\R)), A_0, B_0, E_0)$ is a pre-exponential field.
\end{proposition}

Now we construct an increasing sequence $(\mathcal R_n, A_n, B_n, E_n)_{n \in \N}$ of pre-exponential fields starting from \\$(\mathcal \R((x^\R)), A_0, B_0, E_0)$. Given $(\mathcal R_n, A_n, B_n, E_n)$, construct $(\mathcal R_{n + 1}, A_{n + 1}, B_{n + 1}, E_{n + 1})$ as follows. Let $e^{A_n}$ be a multiplicative ordered group isomorphic to $A_n$, its elements will be denoted as $e^a$ ($a \in A_n$). Then 
$$\mathcal R_{n + 1} := \mathcal R_n((e^{A_n})),$$ 
$$A_{n + 1} := \{f \in R_{n + 1} \mid \supp(f) > e^0\},$$ 
$$B_{n + 1} := \{f \in R_{n + 1} \mid \supp(f) \leqslant e^0\},$$
$$m_{n + 1} := \{f \in R_{n + 1} \mid \supp(f) < e^0\}.$$
It is clear that $\mathcal R_{n + 1} = A_{n + 1}\oplus B_{n + 1}$ and $B_{n + 1} = \mathcal R_n \oplus m_{n + 1}$. For all $b' \in B_{n + 1}$ there are $a \in A_n$, $b \in B_n$ and $\varepsilon \in m_{n + 1}$ such that $b' = a + b + \varepsilon$. Then define 
$$E_{n + 1}(b') := e^a E_n(b) \sum\limits_{n = 0}^{\infty} \frac{\varepsilon^n}{n!}.$$

\begin{proposition}[\cite{vandendries_macintyre_marker:2001}]
    $(\mathcal R_{n + 1}, A_{n + 1}, B_{n + 1}, E_{n + 1})$ is a pre-exponential field, moreover $E_{n + 1} \big|_{B_n} = E_n$. 
\end{proposition}

Define an ordered field $\R((t))^E$ as $\bigcup\limits_{n = 0}^{\infty} \mathcal R_n$ and a map $E \colon  \R((t))^E \to (\R((t))^E)^{>0}$ as $\bigcup\limits_{n = 0}^{\infty} E_n$. Here $t$ stands for $x^{-1}$.

\begin{proposition}[\cite{vandendries_macintyre_marker:2001}]
    $E$ is a strictly increasing homomorphism from $(\R((t))^E, +)$ to $\big((\R((t))^E)^{>0}, \cdot \big)$.
\end{proposition}

Next define $\mathcal G_0 := x^\mathcal \R$ and $\mathcal G_{n + 1} := \mathcal G_n E (A_n)$ for all $n \in \N$. Also define $\mathcal G^E := \bigcup\limits_{n \in \N} \mathcal G_n$. Then the following holds.

\begin{proposition}[{\cite{vandendries_macintyre_marker:2001}}]\label{prop_for_A_n}
For all $n \in \N$, the following holds:
    \begin{enumerate}[(i)]
        \item $G_n \cap E(A_n) = \{1\}$;
        \item $\mathcal R_n \cong \R((\mathcal G_n))$;
        \item $R_n = A_n \oplus \dots \oplus  A_0 \oplus \R \oplus m_0$;
        \item $A_n \oplus \dots \oplus  A_0 = \{ f \in \R((\mathcal G_n)) \mid \supp f > 1\}$;
        \item if $a \in A_{n}$ and $a > 0$, then, for all $m < n$, $a > G_m$.
    \end{enumerate}
\end{proposition}

By \cref{prop_for_A_n} (ii) we may identify $\R((t))^E$ with a subfield of $\R((\mathcal G^E))$ in a unique way. Hence, one may speak about $\supp f$ for $f \in \R((t))^E$. 

Our next aim is to add logarithms to $(\R((t))^E, E)$. Define a map $\Phi\colon  \R((t))^E \to \R((t))^E$ recursively as follows: 
$$\Phi(f) := \begin{cases}
    \sum a_r E(r x), \text{ if } f = \sum a_r x^r \in R_0 =  \R((x^\R)), \\
    \sum \Phi(f_a) E(\Phi(a)), \text{ if } \sum f_a e^a\in R_{n+1} = R_n((e^{A_n})).
\end{cases}$$
Informally speaking, $\Phi$ is a substitution of $E(x)$ for $x$.

Now we define an increasing sequence $(\mathcal L_n, \tilde E_n)_{n = 0}^\infty$ of isomorphic copies of $(\R((t))^E, E)$ with isomorphisms $\eta_n\colon  (\mathcal L_n, \tilde E_n) \to (\R((t))^E, E)$. Let $(\mathcal L_0, \tilde E_0) := (\R((t))^E, E)$, $\eta_0 := \id_{L_0}$. Suppose we have already defined $(\mathcal L_n, \tilde E_n)$ and $\eta_n$. $(\mathcal L_{n+1}, \tilde E_{n + 1})$ is an isomorphic copy of $(\R((t))^E, E)$ with isomorphism $\eta_{n+1}$ such that $L_n \subseteq L_{n+1}$ and for all $z \in L_{n+1}$ we have $\eta_{n+1}(z) = \Phi(\eta_n(z))$. Informally speaking, $\mathcal L_{n + 1}$ is obtained from $\mathcal L_n$ by applying $\Phi^{-1}$, i.e., by substituting $E^{-1}(x)$ for $x$. We have constructed an increasing sequence of fields: 
$$\R((t))^E = (\mathcal L_0, \tilde E_0) \subseteq (\mathcal L_1, \tilde E_1) \subseteq \dots.$$
Now let $\mathcal \R((t))^{\LE} := \bigcup\limits_{n = 0}^\infty (\mathcal L_n, \tilde E_n)$.

\begin{proposition}[\cite{vandendries_macintyre_marker:2001}]
    $\R((t))^{\LE}$ is an exponential field.
\end{proposition}

Define groups $\mathcal G^{E, n} := \eta_n^{-1}(\mathcal G^E)$ for all $n  \in \N$ and $\mathcal G^{\LE} := \bigcup\limits_{n \in \N} \mathcal G^{E, n}$.

\begin{proposition}[\cite{vandendries_macintyre_marker:2001}]
    The sequence $(\mathcal G^{E, n})_{n \in \N}$ is increasing, i.e.
    $$\mathcal G^{E, 0} \subseteq \mathcal G^{E, 1} \subseteq \mathcal G^{E, 2} \subseteq \mathcal \dots.$$
    Hence, the group $\mathcal G^{\LE}$ is well-defined. 
\end{proposition}

Since $\R((t))^E$ is a subfield of $\R((\mathcal G^E))$, $\mathcal L_n$ is a subfield of $\R((\mathcal G^{E, n}))$. So, $\mathcal K((t))^{\LE}$ can be viewed as a subfield of $\mathcal K((\mathcal G^{\LE}))$ and for $f \in \mathcal{K}((t))^{\LE}$ the support $\supp f$ is well-defined (as for elements of $\mathcal K((\mathcal G^{\LE}))$).

It will be no harm to denote exponentiation on $\R((t))^{\LE}$ as $E$. Also denote by $L$ the inverse to $E$.

Denote by $\mathbb{R}_\mathrm{an, exp}$ the ordered field of real numbers expanded with the exponential function and all analytic functions restricted to the cube $[-1, 1]^n$. The following important results hold.

\begin{theorem}[{\cite[Corollary 5.13]{vandendries_macintyre_marker:1994}}]\label{theorem_R_an,exp_is_o-minimal}
    The structure $\mathbb{R}_\mathrm{an, exp}$ is o-minimal.
\end{theorem}

\begin{theorem}[{\cite[Corollary 2.8]{vandendries_macintyre_marker:1997}}]\label{theorem_R((t))^LE_elem_equiv_R_an,exp}
    $\mathbb{R}((t))^{\LE}$ can be expanded to a model of $\Th(\mathbb R_{\mathrm{an,exp}})$. Hence, $\mathbb{R}((t))^{\LE}$ is a model of $\Th(\R_{\exp})$.
\end{theorem}

In our definition of $\mathbb{R}((t))^{\LE}$ we have $E(1) = e$. We can define base-2 exponentiation $E_2$ on $\R((t))^{\LE}$ as $E_2(x) = E(x\ln(2)).$ Then, it follows from \autoref{theorem_R((t))^LE_elem_equiv_R_an,exp} that $(\mathbb{R}((t))^{\LE}, E_2) \vDash \Th(\mathbb R_{\exp_2})$, where $\exp_2(x) = 2^x$.

It remains for us to find an exponential integer part of $\mathbb{R}((t))^{\LE}$. Denote
$$M := \{f \in \mathbb{R}((t))^{\LE} \mid \supp(f) \geqslant x^0 \text{ and the coefficient before $x^0$ lies in }\mathbb{Z}\}$$ 
and denote by $\mathcal M$ the ordered ring with domain $M$. Clearly, it is discretely ordered and is an integer part of $\mathbb{R}((t))^{\LE}$ (see \cite[Lemma 3.2]{mourgues:1993}). Also $\mathcal{M}^+$ is not isomorphic to the standard model, since, for all $n\in \mathbb{N}$, $x > n$, where $x = x^1 \in {M}^+$. Next we show that $M^+$ is closed under $E_2$ and $x^y = E(y L(x))$.

By a \emph{monomial} we mean an element of $\R((t))^{\LE}$ of the form $a g$, $a \in K$, $g \in G^{\LE}$. Clearly, the product of monomials is a monomial. 

\begin{lemma}\label{lemma_for_E(f)}
    Let $f \in \R((t))^{\LE}$. If $\supp f \geqslant x^0$, then $E(f)$ is a monomial. Moreover, if $f$ is infinitely large (i.e., $f > \R$), then $\supp E(f) > x^0$. 
\end{lemma}

\begin{proof}
    Since $(\R((t))^E, E)$ and $(\mathcal{L}_n, \tilde{E}_n)$ are isomorphic, it suffices to prove the statement for $f \in \R((t))^E$. We proceed by induction on $n$ to establish the claim for $f \in R_n$.
    
    First consider the case of $n = 0$. Then $f \in \R((x^\R))$ and $f = f_0 + r$, $\supp f_0 > x^0$ and $r \in \R$  (there is no summand from $m_{0}$ since $\supp f \geqslant x^0$). So, by definition, $E(f) = \exp(r) e^{f_0}$ an the latter is a monomial. Moreover, if $f$ is infinitely large, then $f_0 > 0$ and, hence, $\supp (\exp(r)e^{f_0}) = e^{f_0} > x^0$. 
    
    Now consider inductive step from $n$ to $n + 1$. Then $f = f_0 + f_1$, where $f_0 \in A_{n + 1}$, $f_1 \in R_n$. So, by definition, $E(f) = e^{f_0} E(f_1)$ and $E(f_1)$ is a monomial by the induction hypothesis. Hence, $E(f)$ is a monomial. Additionally, let $f$ be infinitely large. Then either $f_0 = 0$ and $\supp E(f) > x^0$ by the induction hypothesis, or $f_0 > 0$ and $\supp E(f) = \supp (e^{f_0}) \supp (E(f_1)) > x^0$ since $e^{f_0} > G_n$ (\cref{prop_for_A_n}).
\end{proof}

\begin{lemma}\label{lemma_for_exp(eps)}
    For $f \in \R((t))^{\LE}$, if $\supp f < x^0$, then $E(f) = \sum\limits_{i = 0}^\infty \frac{f^i}{i!}$.
\end{lemma}

\begin{proof}
    As in the previous lemma, it suffices to prove the claim for $f \in \R((t))^E$, so we proceed by induction on $n$ to establish the claim for $f \in R_n$.

    For $n = 0$ this is trivial due to the definition. If $f \in R_{n + 1}$, then $f = f_1 + f_2$, where $f_1 \in R_n$ and $f_2 \in m_{n + 1}$ (there is no summand from $A_{n + 1}$ since $\supp f < x^0$). So, by definition and the induction hypothesis, we have 
    $$E(f) = E(f_1) \sum\limits_{j = 0}^\infty \frac{f_2^j}{j!} = \sum\limits_{i = 0}^\infty \frac{f_1^i}{i!} \sum\limits_{j = 0}^\infty \frac{f_2^j}{j!}.$$
    By straightforward computations one can show that $E(f) = \sum\limits_{k = 0}^\infty \frac{(f_1 + f_2)^k}{k!}$.
\end{proof}

\begin{corollary}\label{corollary_for_L(1 + eps)}
    For $f \in \R((t))^{\LE}$, if $\supp f < x^0$, then $L(1 + f) = \sum\limits_{i = 1}^\infty \frac{(-1)^{i + 1} f^i}{i}$.
\end{corollary}

\begin{proof}
    Just apply \cref{lemma_for_E(f)} to $\sum\limits_{i = 1}^\infty \frac{(-1)^{i + 1} f^i}{i}$.
\end{proof}

\begin{proposition}
    If $f_1, f_2 \in M^{> 0}$, then $f_1^{f_2} \in M^{>0}$. 
\end{proposition}

\begin{proof}
    For $f_2 \in \N$ the claim is trivial, so let $f_2$ be nonstandard. Let 
    $$h := \max \supp f_1$$ an $a \in \R^{>0}$ be the coefficient before $h$ in $f_1$. Set $\hat f := f_1 - a h$. We have $$f_1^{f_2} = (a h)^{f_2} E(f_2 L(1 + \frac{\hat f}{a h})),$$
    and $\supp \frac{\hat f}{a h} < x^0$. By \cref{corollary_for_L(1 + eps)}, 
    $$L(1 + \frac{\hat f}{a h}) = \sum\limits_{i > 0}\frac{(-1)^{i + 1}}{i} \Big(\frac{\hat f}{a h} \Big)^i.$$
    Hence, $$\supp (f_2 L(1 + \frac{\hat f}{a h})) \subseteq \{g_1 \dots g_k h^{-k} g' \mid g_1, \dots, g_k \in \supp \hat f, g' \in \supp f_2, k \in \N^{>0}\}.$$
    Write $f_2 L(1 + \frac{\hat f}{a h})$ as a sum $s_1 + s_2$, where $\supp (s_1) \geqslant x^0$ and $\supp (s_2) < x^0$. By \cref{lemma_for_E(f)}, $E(s_1)$ is a monomial and, moreover, $\supp E(s_1) \geqslant x^0$. By \cref{lemma_for_exp(eps)}, we have
    $$E(s_2) = \sum\limits_{n = 0} \frac{s_2^n}{n!},$$
    so, $$\supp E(s_2) \subseteq \{g_1 \dots g_k h^{-k} g'_1 \dots g_m' \mid g_1, \dots, g_k \in \supp \hat f, \ g'_m, \dots, g'_m \in \supp f_2,\ k, m \in \N\}.$$

    Thus every monomial from $f_1^{f_2}$ is of the form 
    $$c a^{f_2} h^{f_2 - k} E(s_1) g_1 \dots g_{k} g'_1 \dots g'_m$$ where $c \in \R$, $g_1, \dots, g_k \in \supp \hat f$, $g'_1 \dots g'_m \in \supp f_2$ (here $a^{f_2}$ and $h^{f_2 - k}$ a monomials with $\supp > x^0$ by \cref{lemma_for_E(f)}). Consequently, $\supp f_1^{f_2} > x^0$ and $f_1^{f_2} \in M^{>0}$.
\end{proof}

So $\mathcal M \subseteq^{\IP}_{\exp} (\mathbb{R}((t))^{\LE}, E_2)$ and $\mathcal M \subseteq^{\IP}_{x^y} ( \mathbb{R}((t))^{\LE}, E)$. By \autoref{theorem_M_exp_ip_ExpField+MaxVal}, $(\mathcal{M}^+, E_2) \vDash \mathsf{IOpen(exp)}$, by \autoref{theorem_(M, x^y)_ip_R_exp}, $(\mathcal{M}^+, x^y) \vDash\mathsf{IOpen(x^y)}$. Now we can obtain some independence results.

\begin{corollary}\label{corollary_independence_result_1}
    $\mathsf{IOpen(x^y)}$ does not prove the irrationality of $\sqrt 2$.
\end{corollary}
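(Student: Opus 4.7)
The plan is to exhibit a countermodel: in the nonstandard model $(\mathcal{M}^+, x^y) \vDash \mathsf{IOpen(x^y)}$ just constructed, I want to find witnesses $a, b \in M^+$ with $b > 0$ such that $a^2 = 2 b^2$, which directly refutes the irrationality of $\sqrt 2$ in this model (the statement being the $\Pi_1$ sentence $\forall a \forall b \, (b > 0 \rightarrow a \cdot a \ne 2 \cdot b \cdot b)$).

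First I would recall the definition of $M$: an element $f \in \mathbb R((t))^{LE}$ lies in $M$ iff every $g \in \mathrm{Supp}(f)$ satisfies $g \geqslant 1$ and the coefficient of $x^0$ in $f$ is an integer. The key observation is that the reals $\mathbb R$ live inside $\mathbb R((t))^{LE}$, so in particular $\sqrt{2} \in \mathbb R((t))^{LE}$. Then I would look at the element $\sqrt{2}\, x \in \mathbb R((t))^{LE}$: its support is $\{x\}$ (with $x > 1$) and the coefficient of $x^0$ is $0 \in \mathbb Z$, so $\sqrt{2}\, x \in M$. Clearly $\sqrt{2} \, x > 0$ (hence lies in $M^+$), and likewise $x \in M^+$ with $x > 0$.

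Setting $a := \sqrt 2 \, x$ and $b := x$, a direct computation in $\mathbb R((t))^{LE}$ gives $a^2 = 2 x^2 = 2 b^2$. Thus $(\mathcal M^+, x^y) \vDash \exists a \exists b (b > 0 \wedge a^2 = 2 b^2)$. Since by the discussion preceding the corollary $(\mathcal M, x^y) \subseteq^{IP} (\mathbb R((t))^{LE}, \exp)$ and $(\mathbb R((t))^{LE}, \exp) \vDash Th(\mathbb R_{\exp_2})$, \cref{theorem_(M, x^y)_ip_R_exp} yields $(\mathcal M^+, x^y) \vDash \mathsf{IOpen(x^y)}$. Hence there is a model of $\mathsf{IOpen(x^y)}$ in which the formula expressing irrationality of $\sqrt 2$ fails, so $\mathsf{IOpen(x^y)}$ does not prove it.

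There is essentially no obstacle beyond the careful bookkeeping of what belongs to $M$; the only thing to verify is that $\sqrt 2 \, x$ satisfies the membership conditions of $M$ (support condition and integrality of the $x^0$-coefficient), which is immediate from the definition. The same idea, of course, shows that $\mathsf{IOpen(x^y)}$ does not prove the irrationality of $\sqrt{k}$ for any positive rational $k$ which is not a square in $\mathbb Q$.
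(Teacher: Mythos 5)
Your proposal is correct and is essentially identical to the paper's one-line argument: both take $a = \sqrt{2}\,x$ and $b = x$ in $M^+$ as witnesses to $a^2 = 2b^2$, using the fact (established just before the corollary) that $(\mathcal M^+, x^y) \vDash \mathsf{IOpen(x^y)}$. The only difference is that you spell out the bookkeeping the paper leaves implicit.
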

    
\begin{proof}
    Since $x$ and $x\sqrt{2}$ lie in $M^+$, $(\mathcal{M}^+, x^y) \vDash \exists x \exists y (x^2 = 2y^2 \wedge x \ne 0 \wedge y \ne 0)$.
\end{proof}

\begin{remark}
    In the argument above the number $2$ can be replaced by an arbitrary natural number.
\end{remark}

\begin{corollary}\label{corollary_independence_result_2}
    For all $n \in \mathbb N$, $n \geqslant 3$, $\mathsf{IOpen(x^y)} \nvdash \neg\exists x \exists y \exists z (x^n + y^n = z^n \wedge x \ne 0 \wedge y \ne 0 \wedge z \ne 0)$.
\end{corollary}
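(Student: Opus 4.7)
The plan is to reuse, with only a trivial modification, the countermodel $(\mathcal{M}^+, x^y) \vDash \mathsf{IOpen}(x^y)$ built from the integer part $\mathcal M$ of $\mathbb R((t))^{LE}$ that has just been exhibited, and to write down an explicit nonzero Fermat triple living in $M^+$. Mimicking the obvious real solution of $a^n + b^n = c^n$ in which $a = b$, the natural candidate is
$$a := x, \qquad b := x, \qquad c := 2^{1/n}\, x,$$
where $x = x^1 \in M^+$ is the indeterminate and $2^{1/n} \in \mathbb R$.

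First, I would check that $a, b, c \in M^+$ and that none of them is zero. Each of these three elements of $\mathbb R((t))^{LE}$ has support $\{x^1\}$ in the monomial group $x^{\mathbb R}$, and $x^1 > x^0 = 1_{\mathcal G}$, so the support condition for membership in $M$ is met; moreover the coefficient of $x^0$ in each is $0 \in \mathbb Z$. Secondly, I would compute both sides of the Fermat equation using the defining rule $m^k = \exp(k \log m)$ of the $x^y$-integer part (with $\log = \exp^{-1}$). Since $a = b = x$, this gives $a^n = b^n = x^n$, so $a^n + b^n = 2 x^n$; and, using the identity $\exp(1) = 2$,
$$c^n = \exp\bigl(n \log(2^{1/n} x)\bigr) = \exp\bigl(1 + n \log x\bigr) = 2 \cdot x^n.$$
Hence $a^n + b^n = c^n$ holds in $(\mathcal{M}^+, x^y)$. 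Since $(\mathcal{M}^+, x^y) \vDash \mathsf{IOpen}(x^y)$, the existential Fermat statement is consistent with $\mathsf{IOpen}(x^y)$, and its negation is therefore not a theorem.

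The whole argument is essentially the $n = 2$ argument of Corollary~3.1 with $x\sqrt{2}$ replaced by $2^{1/n} x$; the only mildly delicate point is the verification that $2^{1/n} x \in M^+$, and this reduces to the observation that the definition of $M$ only restricts the coefficient of the constant monomial $x^0$ to be an integer while leaving coefficients of higher monomials $x^r$ ($r > 0$) free. Beyond that, everything follows from the homomorphism property of $\exp$ together with $\exp(1) = 2$, so no genuinely new obstacle arises compared with the preceding corollary.
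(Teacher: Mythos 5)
Your proposal is correct and matches the paper's intended approach: you exhibit the explicit nonzero Fermat triple $(x, x, 2^{1/n}x)$ in $M^+$ — the direct analogue of the witness pair $(x\sqrt 2, x)$ used in \cref{corollary_independence_result_1} — and verify it via the defining identity $m^k = \exp(k\log m)$ together with $\exp(1)=2$. The paper's own proof says nothing beyond ``Similar to \cref{corollary_independence_result_1},'' which is precisely this construction.
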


\begin{proof}
    Similar to \cref{corollary_independence_result_1}.
\end{proof}

\begin{remark}
    Of course, the results above can be stated for $\mathsf{IOpen(\exp)}$ as well.
\end{remark}

\section{Open questions and further results}\label{section_further_results}

One can ask whether the opposite statements to \autoref{theorem_M_exp_ip_ExpField+MaxVal} and \autoref{theorem_(M, x^y)_ip_R_exp} hold. The problem with \autoref{theorem_M_exp_ip_ExpField+MaxVal} is that we can have only powers of $2$ as was discussed in \cref{section_introduction}. So, it remains unclear how to embed an arbitrary model of $\mathsf{IOpen(\exp)}$ in an exponential field as an exponential integer part. In the paper \cite{jerabek2024}, E. Jeřábek faced a similar problem when axiomatizing the theory of exponential integer parts in the language $\mathcal L_{\OSR}(\exp)$. He conjectured that this class is not elementary (see the discussion in the last section of \cite{jerabek2024}). The problem with \autoref{theorem_(M, x^y)_ip_R_exp} is that the theory $\mathsf{KTB}$ (a) seems to be too strong and could be made weaker, (b) has an implicit axiomatization. One can try to formalize Khovanskii's proof in some fragment of $\Th(\R_{\exp})$, but this requires more effort. Additionally, his proof uses Sard's Theorem, which is a non-elementary statement, however, we only need a corollary of it, which can be stated in the first-order language (namely, that the set of critical values of a smooth function has an empty interior). This would lead to a simpler theory, but, nevertheless, it is not obvious, how to prove the axioms of it in an exponential field  with an $x^y$-integer part which is a model of $\mathsf{IOpen(x^y)}$.

\begin{question}
    Does the opposite statement to \autoref{theorem_M_exp_ip_ExpField+MaxVal} hold? That is, given a model $(\mathcal M^+, \exp_M) \vDash \mathsf{IOpen}(\exp)$, does there exist an exponential field $(\mathcal R, \exp_R) \vDash \mathsf{ExpField} + \mathsf{MaxVal}(\mathcal L_{\OR}(\exp)) + \exp(1) = 2$ with ${\exp_R|}_{M^+} = \exp_M$ such that $\mathcal M \subseteq_{\exp}^{\IP} (\mathcal R \exp)$?  We also ask the analogous question for \autoref{theorem_(M, x^y)_ip_R_exp}.
\end{question}

Also there are several problems concerning $\mathsf{IOpen}$ that can be stated for $\mathsf{IOpen(\exp)}$ as well. One is an open question on the decidability of the set of Diophantine equations solvable in models of $\mathsf{IOpen}$, or, more generally, of the set of all $\forall$-sentences provable in $\mathsf{IOpen}$. This question was studied extensively, see \cite{wilkie1978, van_den_dries1980, otero1990} and others. Towards the solution of this problem, A. Wilkie obtained the following result.

\begin{theorem}[\cite{wilkie1978}]
    Every discretely ordered $\Z$-semiring can be embedded in a model of $\mathsf{IOpen}$. 
\end{theorem}

A similar question was posed by E. Jeřábek in the very end of \cite{jerabek2024}. 
\begin{question}[\cite{jerabek2024}]
    Is the theory of exponential integers part of RCEF $\forall$-conservative over $\mathsf{IOpen}$ (or, equivalently, over the theory of discretely ordered $\Z$-semirings)?
\end{question}

The theory of exponential integers parts of RCEF (which is denoted in \cite{jerabek2024} as $\mathsf{TEIP}_{2^x}^+$) is axiomatized as $\mathsf{IOpen}$ plus 
\begin{align}
    &\exp(1) = 2,  \quad \exp(x + y) = \exp(x) \exp(y), \quad \exp(x) > x,\\
    &x > 0 \to \exists y (\exp(y) \leqslant x < \exp(y + 1)).
\end{align}
In our future paper we obtain a result towards this question: namely, the theory $\mathsf{IOpen}$ augmented by axioms (1) is $\forall$-conservative over the theory of discretely ordered $\Z$-semirings. However, we are not able to ``add logarithms'' yet. Another question, which was posed by A. Wilkie (private correspondence) and is interesting for us, is on the $\forall$-fragment of the theory $\mathsf{IOpen(\exp)}$. 
\begin{question}
    Is the theory $\mathsf{IOpen(\exp)}$ $\forall$-conservative over $\mathsf{IOpen}$ augmented by axioms (1) (or even over the theory of discretely ordered $\Z$-semirings augmented by (1))?
\end{question}
An affirmative answer to this question would lead to the $\forall$-conservativity of $\mathsf{IOpen(\exp)}$ over the theory of discretely ordered $\Z$-semirings and hence to a positive solution to the Jeřábek's question.

Another question concerns the theory considered in \autoref{theorem_M_exp_ip_ExpField+MaxVal}, namely, $\mathsf{ExpField} + \mathsf{MaxVal}(\mathcal L_{\OR}(\exp)) + \exp(1) = 2$. While it is known that the theory $\ExpField + \mathsf{RCF} + \forall x (\exp(x) \geqslant 1 + x)$ from \autoref{theorem_(M,x^y)_is_iopen+T_x^y<=>ip_of_ExpField_RCF+Bern)} does not axiomatize $\Th(\R, \exp)$ (see \cite{Dahn1983}), the question whether $\mathsf{ExpField} + \mathsf{MaxVal}(\mathcal L_{\OR}(\exp)) + \exp(1) = 2$ axiomatizes  $\Th(\R, \exp_2)$, where $\exp_2$ denotes the usual base-2 exponentiation, seems to be open. An affirmative answer would imply the decidability of $\Th(\R, \exp_2)$. One may also ask whether $\mathsf{ExpField} + \mathsf{MaxVal}(\mathcal L_{\OR}(\exp)) + \exp(1) = 2$ axiomatizes the possibly weaker theory of definably complete base-2 exponential fields; this question is also open.

\begin{question}
    Does  $\mathsf{ExpField} + \mathsf{MaxVal}(\mathcal L_{\OR}(\exp)) + \exp(1) = 2$ axiomatize the theory of definably complete base-2 exponential fields, or even $\Th(\R, \exp_2)$?
\end{question}

Finally, an important question --- as highlighted in the introduction --- is whether Tennenbaum's Theorem holds for $\mathsf{IOpen(\exp)}$ and $\mathsf{IOpen(x^y)}$, which remains open. 
\begin{question}
    Do $\mathsf{IOpen(\exp)}$ and $\mathsf{IOpen(x^y)}$ have nonstandard recursive models?
\end{question}
Shepherdson's result \cite{shepherdson:1964} shows that for $\mathsf{IOpen}$ the answer is negative. His proof relies on a concrete construction of a non-archimedean real closed field, using Puiseux series, and extracting an integer part from it in a simple way. There are some generalizations of this, for instance, A. Berarducci and M. Otero showed that there is a recursive nonstandard model of normal open induction \cite{berarducci}. Further results on Tennenbaum phenomenon can be found in \cite{daquino97, yaegasi08}. However, adapting Shepherdson’s method to the exponential case is non-trivial, as the construction in \cite{vandendries_macintyre_marker:2001} is non-recursive. Applying some results from recursive model theory, modulo Schanuel's Conjecture, one can obtain a nonstandard model of $\R_{\exp}$ (and, in fact, an elementary recursive submodel of $(\mathbb{R}((t))^{\LE}, E_2)$), but such a model seems to have no ``constructive'' integer part.

\section*{Funding}
This work was performed at the Steklov International Mathematical Center and supported by the Ministry of Science and Higher Education of the Russian Federation (agreement no. 075-15-2025-303).

\bibliographystyle{unsrt}
\bibliography{references}

@article{shepherdson:1964,
  title={A nonstandard model for a free variable fragment of number theory},
  author={J. Shepherdson},
  journal={Bulletin of the Polish Academy of Sciences},
  volume={12},
  year={1964},
}

@article{wilkie:1996,
  title={Model completeness results for expansions of the ordered field of real numbers by restricted pfaffian functions and the exponential function},
  author={A. Wilkie},
  journal={Journal of the American Mathematical Society},
  volume={9},
  year={1996},
}

@article{vandendries_macintyre_marker:2001,
  title={Logarithmic-exponential series},
  author={L. van den Dries and A. Macintyre and D. Marker},
  journal={Annals of Pure and Applied Logic},
  volume={111},
  year={2001},
}

@article{vandendries_macintyre_marker:1997,
  title={Logarithmic-exponential power series},
  author={L. van den Dries and A. Macintyre and D. Marker},
  journal={Journal of the London Mathematical Society},
  volume={56},
  year={1997},
}

@article{krapp:2019,
  title={Algebraic and Model Theoretic Properties of O-minimal Exponential Fields},
  author={L. S. Krapp},
  journal={Doctoral Thesis},
  year={2019},
}

@article{vandendries_macintyre_marker:1994,
  title={The Elementary Theory of Restricted Analytic Fields with Exponentiation},
  author={L. van den Dries and A. Macintyre and D. Marker},
  journal={Annals of Mathematics},
  volume={140},
  year={1994},
}

@book{lang,
   title =     {Algebra},
   author =    {S. Lang},
   publisher = {Springer},
   year =      {2002},
   series =    {Graduate Texts in Mathematics    №211},
   edition =   {3}
}

@book{marker,
  author = {D. Marker and M. Messer and A. Pillay},
  title = {Model Theory of Fields}
}

@article{vandendries:1984,
  title={Exponential Rings, Exponential Polynomials and Exponential Functions},
  author={L. van den Dries},
  journal={Pacific Journal of Mathematics},
  volume={113},
  year={1984},
}

@article{mourgues:1993,
  title={Every Real Closed Field Has an Integer Part},
  author={M. H. Mourgues and J.-P. Ressayre},
  journal={The Journal of Symbolic Logic},
  volume={28},
  year={1993},
}

@article{ressayre:1993,
  title={Integer Parts of Real Closed Exponential Fields},
  author={J.-P. Ressayre},
  journal={Arithmetic, Proof Theory and Computational Complexity},
  volume={Oxford Logic Guides 23},
  year={1993},
}

@article{jerabek2022,
author = {Jeřábek, E.},
title = {Models of $\mathsf{VTC^0}$ as exponential integer parts},
journal = {Mathematical Logic Quarterly},
volume = {69},
number = {2},
pages = {244-260},
doi = {https://doi.org/10.1002/malq.202300001},
url = {https://onlinelibrary.wiley.com/doi/abs/10.1002/malq.202300001},
eprint = {https://onlinelibrary.wiley.com/doi/pdf/10.1002/malq.202300001},
abstract = {Abstract We prove that (additive) ordered group reducts of nonstandard models of the bounded arithmetical theory VTC0\$\mathsf {VTC^0}\$ are recursively saturated in a rich language with predicates expressing the integers, rationals, and logarithmically bounded numbers. Combined with our previous results on the construction of the real exponential function on completions of models of VTC0\$\mathsf {VTC^0}\$, we show that every countable model of VTC0\$\mathsf {VTC^0}\$ is an exponential integer part of a real-closed exponential field.},
year = {2023}
}

@article{jerabek2024,
      title={On the theory of exponential integer parts}, 
      author={E. Jeřábek},
      year={to appear (2026)},
      journal={Zeitschrift für Mathematische Logik und Grundlagen der Mathematik},
      pages={175--196}
}

@article{ITA_2008__42_1_105_0,
     author = {Boughattas, S. and Ressayre, J.-P.},
     title = {Arithmetization of the field of reals with exponentiation extended abstract},
     journal = {RAIRO - Theoretical Informatics and Applications - Informatique Th\'eorique et Applications},
     pages = {105--119},
     publisher = {EDP-Sciences},
     volume = {42},
     number = {1},
     year = {2008},
     doi = {10.1051/ita:2007048},
     mrnumber = {2382546},
     zbl = {1144.03027},
     language = {en},
     url = {http://www.numdam.org/articles/10.1051/ita:2007048/}
}

@article{Dahn1983,
	author = {Bernd I. Dahn and Helmut Wolter},
	doi = {10.1002/malq.19830290902},
	journal = {Mathematical Logic Quarterly},
	number = {9},
	pages = {465--480},
	publisher = {Wiley-Blackwell},
	title = {On the Theory of Exponential Fields},
	volume = {29},
	year = {1983}
}

@inproceedings{Macintyre1996OnTD,
  title={On the decidability of the real exponential field},
  author={A. Macintyre and A. Wilkie},
  year={1996},
  url={https://api.semanticscholar.org/CorpusID:118222632}
}

@article{khovankii,
	author = {{A}. {G}. {K}hovanskii},
	title = {{A} class of systems of transcendental equations},
    journal = {{D}okl. {A}kad. {N}auk {S}{S}{S}{R}, 255:4},
	year = {1980},
}

@article{otero1990,
 ISSN = {00224812},
 URL = {http://www.jstor.org/stable/2274664},
 abstract = {We consider IOpen, the subsystem of PA (Peano Arithmetic) with the induction scheme restricted to quantifier-free formulas. We prove that each model of IOpen can be embedded in a model where the equation x21 + x22 + x23 + x24 = a has a solution. The main lemma states that there is no polynomial f(x,y) with coefficients in a (nonstandard) DOR M such that $|f(x,y)| < 1$ for every (x,y) ∈ C, where C is the curve defined on the real closure of M by C: x2 + y2 = a and $a > 0$ is a nonstandard element of M.},
 author = {M. Otero},
 journal = {The Journal of Symbolic Logic},
 number = {2},
 pages = {779--786},
 publisher = {Association for Symbolic Logic},
 title = {On Diophantine Equations Solvable in Models of Open Induction},
 urldate = {2022-04-18},
 volume = {55},
 year = {1990}
}

@incollection{wilkie1978,
title = {Some Results and Problems on Weak Systems of Arithmetic},
editor = {Angus Macintyre and Leszek Pacholski and Jeff Paris},
series = {Studies in Logic and the Foundations of Mathematics},
publisher = {Elsevier},
volume = {96},
pages = {285-296},
year = {1978},
booktitle = {Logic Colloquium '77},
issn = {0049-237X},
doi = {https://doi.org/10.1016/S0049-237X(08)72011-1},
url = {https://www.sciencedirect.com/science/article/pii/S0049237X08720111},
author = {A. Wilkie},
abstract = {Publisher Summary
This chapter discusses results and problems on weak systems of arithmetic. Shepherdson had investigated many systems of arithmetic, one of his aim being finding out the “algebraic effect” of various induction rules and axioms. One problem left unsolved there was to find an algebraic characterization of the open consequences of all the free variable induction axioms. The chapter presents a partial answer to this question for sentences of the form f (x→) ≠ 0 (where f(x→) is a polynomial with integer coefficients). The crucial lemma required in the case states that the full open induction axioms have precisely the same universal consequences as a much weaker system. Techniques are extended to models of P1—the induction scheme for universal formulae (without parameters)—and a connection is obtained between them and models of true arithmetic. The chapter shows that the set of all true universal sentences is the only universal theory implying P­1.}
}

@article{van_den_dries1980,
title = "{Which Curves Over $\mathbb Z$ have Points with Coordinates in a Discrete Ordered Ring?}",
author = "{L. van den Dries}",
year = "1981",
language = "English (US)",
volume = "264",
series = "Transactions of the American Mathematical Society",
journal = "American Mathematical Society",
pages = "181-189"
}

@article{glivicka2017shepherdsons,
      title={Shepherdson's theorems for fragments of open induction}, 
      author={J. Glivická and P. Glivický},
      year={2017},
      eprint={1701.02001},
      journal={arXiv: 1701.02001}
}

@article{berarducci,
 ISSN = {00224812},
 URL = {http://www.jstor.org/stable/2275813},
 abstract = {Models of normal open induction are those normal discretely ordered rings whose nonnegative part satisfy Peano's axioms for open formulas in the language of ordered semirings. (Where normal means integrally closed in its fraction field.) In 1964 Shepherdson gave a recursive nonstandard model of open induction. His model is not normal and does not have any infinite prime elements. In this paper we present a recursive nonstandard model of normal open induction with an unbounded set of infinite prime elements.},
 author = {A. Berarducci and M. Otero},
 journal = {The Journal of Symbolic Logic},
 number = {4},
 pages = {1228--1241},
 publisher = {[Association for Symbolic Logic, Cambridge University Press]},
 title = {A Recursive Nonstandard Model of Normal Open Induction},
 urldate = {2024-11-20},
 volume = {61},
 year = {1996}
}

@incollection {van_den_dries1980_weak_arithm,
	AUTHOR = {L. van den Dries},
	TITLE = {Some model theory and number theory
		 for models of weak systems of arithmetic},
	BOOKTITLE = {Model theory of algebra and arithmetic},
	SERIES = {Lecture Notes in Mathematics},
	NUMBER = {834},
	PUBLISHER = {Springer},
	YEAR = {1980},
	PAGES = {346--362},
	DOI = {10.1007/BFb0090173},
	ISSN = {0075-8434},
	ISBN = {9783540102694},
}

@article{denef_dries_1988,
 ISSN = {0003486X, 19398980},
 URL = {http://www.jstor.org/stable/1971463},
 author = {J. Denef and L. van den Dries},
 journal = {Annals of Mathematics},
 number = {1},
 pages = {79--138},
 publisher = {[Annals of Mathematics, Trustees of Princeton University on Behalf of the Annals of Mathematics, Mathematics Department, Princeton University]},
 title = {p-adic and Real Subanalytic Sets},
 urldate = {2024-11-20},
 volume = {128},
 year = {1988}
}

@article{MILLER1994,
title = {Expansions of the real field with power functions},
journal = {Annals of Pure and Applied Logic},
volume = {68},
number = {1},
pages = {79-94},
year = {1994},
issn = {0168-0072},
doi = {https://doi.org/10.1016/0168-0072(94)90048-5},
url = {https://www.sciencedirect.com/science/article/pii/0168007294900485},
author = {C. Miller},
abstract = {We investigate expansions of the ordered field of real numbers equipped with a family of real power functions. We show in particular that the (O-minimal) theory of the ordered field of real numbers augmented by all restricted analytic functions and all real power functions admits elimination of quantifiers and has a universal axiomatization. We derive that every function of one variable definable in this structure, not ultimately identically 0, is asymptotic at + ∞ to a real function of the form x ↦ cxr, c ≠ 0; in particular, this structure is polynomially bounded. Furthermore, given any definable function f:U → R with U open in Rn, if α ϵ U and f is infinitely differentiable at α, then f is real analytic in a neighborhood of α.}
}

@book{hajek_pudlak_2017, place={Cambridge}, series={Perspectives in Logic}, title={Metamathematics of First-Order Arithmetic}, DOI={10.1017/9781316717271}, publisher={Cambridge University Press}, author={Hájek, Petr and Pudlák, Pavel}, year={2017}, collection={Perspectives in Logic}}

@book{kaye,
    author = {Kaye, R.},
    title = {Models of Peano Arithmetic},
    publisher = {Oxford University Press},
    year = {1991},
    month = {01},
    abstract = {Nonstandard models of arithmetic are of interest to mathematicians through the presence of infinite (or nonstandard) integers and the various properties they inherit from the finite integers. Since their introduction in the 1930s (by Skolem and Gödel ), they have come to play an important role in model theory, and in combinatorics through independence results such as the Paris-Harrington theorem. This book is an introduction to these developments, and stresses the interplay between the first-order theory, recursion-theoretic aspects, and the structural properties of these models. Prerequisites have been kept to a minimum. A basic grounding in elementary model theory and a familiarity with the notions of recursive, primitive recursive, and r.e. sets will be sufficient. Consequently, the book should be suitable for postgraduate students coming to the subject for the first time and a variety of exercises of varying degrees of difficulty will help to further the reader's understanding. Beginning with Gödel's incompleteness theorem, the book covers the prime models, cofinal extensions, end extensions, Gaifman's construction of a definable type, Tennenbaum's theorem, Friedman's theorem and subsequent work on indicators, and culminates in a chapter on recursive saturation and resplendency. },
    isbn = {9780198532132},
    doi = {10.1093/oso/9780198532132.001.0001},
    url = {https://doi.org/10.1093/oso/9780198532132.001.0001},
}

@book{fuchs, 
    place={Oxford}, 
    title={Partially Ordered Algebraic Systems}, 
    publisher={Pergamon Press}, 
    author={L. Fuchs}, 
    year={1963}, 
}

@article{PaolaDAquino2012,
author = {P. D'Aquino and J. F. Knight and S. Kuhlmann and K. Lange},
journal = {Fundamenta Mathematicae},
number = {2},
pages = {163-190},
title = {Real closed exponential fields},
url = {http://eudml.org/doc/286459},
volume = {219},
year = {2012},
}

@article{wilmers_1985, title={Bounded existential induction}, volume={50}, DOI={10.2307/2273790}, number={1}, journal={The Journal of Symbolic Logic}, publisher={Cambridge University Press}, author={Wilmers, G.}, year={1985}, pages={72–90}}

@article{daquino97,
author = {P. D'Aquino},
title = {{Toward the Limits of the Tennenbaum Phenomenon}},
volume = {38},
journal = {Notre Dame Journal of Formal Logic},
number = {1},
publisher = {Duke University Press},
pages = {81 -- 92},
year = {1997},
doi = {10.1305/ndjfl/1039700698},
URL = {https://doi.org/10.1305/ndjfl/1039700698}
}

@article{yaegasi08,
author = {S. Yaegasi},
title = {{Tennenbaum's Theorem and Unary Functions}},
volume = {49},
journal = {Notre Dame Journal of Formal Logic},
number = {2},
publisher = {Duke University Press},
pages = {177 -- 183},
keywords = {nonstandard models, Peano Arithmetic, Tennenbaum's theorem},
year = {2008},
doi = {10.1215/00294527-2008-006},
URL = {https://doi.org/10.1215/00294527-2008-006}
}

@unpublished{tarski1948,
author = {A. Tarski},
title = {{A Decision Method For Elementary Algebra And Geometry.  U. S. Air Force Project Rand, R-109. Prepared for publication by J. C. C. McKinsey}},
note = {The Rand Corporation, Santa Monica, Calif.},
year = {1948}
}

@article{gamboa1987,
 author = {Gamboa, J. M.},
 title = {Some new results on ordered fields},
 fjournal = {Journal of Algebra},
 journal = {J. Algebra},
 issn = {0021-8693},
 volume = {110},
 pages = {1--12},
 year = {1987},
 language = {English},
 doi = {10.1016/0021-8693(87)90033-0},
 keywords = {12D15,12J15,12E12},
 zbMATH = {4006383},
 Zbl = {0621.12017}
}

@book{servi_thesis,
 author = {Servi, T.},
 title = {On the first-order theory of real exponentiation},
 fseries = {Tesi. Scuola Normale Superiore di Pisa (Nuova Serie)},
 series = {Tesi, Sc. Norm. Super. Pisa (N.S.)},
 issn = {2239-1460},
 volume = {6},
 isbn = {978-88-7642-325-3},
 year = {2008},
 publisher = {Pisa: Edizioni della Normale; Pisa: Scuola Normale Superiore (Dissertation)},
 language = {English},
 keywords = {03-02,03C60,03B25,03C64,12L05,12L12},
 zbMATH = {5251297},
 Zbl = {1388.03003}
}

@article{scott1969,
	author = {D. Scott},
	journal = {Applications of model theory to algebra, analysis, and probability},
	editor = {W. A. J. Luxemburg},
	pages = {274--278},
	publisher = {Holt, Rinehart and Winston},
	title = {On Completing Ordered Fields},
	year = {1969}
}

@article{Carl_Krapp_2021, 
title={Models of true arithmetic are integer parts of models of real exponentation}, 
volume={13}, 
url={http://dx.doi.org/10.4115/jla.2021.13.3}, 
DOI={10.4115/jla.2021.13.3}, 
journal={Journal of Logic and Analysis}, 
publisher={Journal of Logic and Analysis}, 
author={Carl, M. and Krapp, L.}, 
year={2021}
}

\end{document}